\newenvironment{proposition}{\noindent\textbf{Proposition:}\hspace{1em}}{}
\newcommand{\C}{\mathbb{C}}
\newcommand{\PP}{\mathbb{P}}
\newcommand{\Z}{\mathbb{Z}}
\newcommand{\one}{\mathds{1}}
\newcommand{\G}{\mathcal{G}}
\newcommand{\map}{\textrm{Map}}
\newcommand{\sm}[1]{\langle{#1}\rangle}%Samelson product
\theoremstyle{plain}
\newtheorem{thm}{Theorem}[section]
\newtheorem{lemma}[thm]{Lemma}
\newtheorem{remark}[thm]{Remark}
\theoremstyle{definition}
\newcommand{\be}{\begin{equation}}
\newcommand{\ee}{\end{equation}}
\newcommand{\bp}{\begin{proposition}}
\newcommand{\ep}{\end{proposition}}
\newcommand{\ra}{\rightarrow}
\newcommand{\xra}{\xrightarrow}
\newcommand{\dx}{\delta}
\newcommand{\Dx}{\Delta}
\newcommand{\mbz}{\mathbb{Z}}
\newcolumntype{C}[1]{>{\centering\let\newline\\\arraybackslash\hspace{0pt}}m{#1}}
\begin{document}

\title{The Homotopy Type of a Once-Suspended 6-Manifold and Its Applications}
\author[T. Cutler]{Tyrone Cutler}
\address{Fakult\"{a}t F\"{u}r Mathematik, Universit\"{a}t Bielefeld, Universit\"{a}tsstra{\ss}e 25, 33615 Bielefeld, Germany}
\email{tcutler@math.uni-bielefeld.de}

\author[T. So]{Tseleung So}
\address{Department of Mathematics and Statistics, Univeristy of Regina, Regina, SK S4S 0A2, Canada}
\email{tse.leung.so@uregina.ca}
%\date{}

\subjclass[2010]{Primary 57N65, 55P15, 55P40}
\keywords{6-manifold, reduced cohomology theory, gauge group}

\maketitle

\begin{abstract}
Let $M$ be a closed, oriented, simply connected 6-manifold. After localization away from 2, we give a homotopy decomposition of $\Sigma M$ in terms of spheres, Moore spaces and other recognizable spaces. As applications we calculate generalized cohomology groups of $M$ and determine the homotopy types of gauge groups of certain bundles over $M$. 
\end{abstract}

\section{Introduction}

It is natural to attempt to decompose a given topological space $X$ into a product or union of smaller spaces which are easier to study. In some cases, after suspension, we may obtain a decomposition $\Sigma X\simeq A\vee B$, for spaces $A$ and $B$, called a \emph{suspension splitting} of~$X$. Whenever a suspension splitting exists, the wedge summands $A$ and $B$ are restricted by the (co)homology of $X$. For example, in~\cite{ST19} the second author and Theriault showed that if $M$ is a closed, orientable, smooth, 4-manifold with no 2-torsion in homology, then~$\Sigma M$ is homotopy equivalent to a wedge of spheres, Moore spaces, and $\Sigma\C\PP^2$'s according to $H_*(M)$ and the second Stiefel-Whitney class $w_2(M)$. Applications to cohomology and mapping spaces were found for this splitting result.

In this paper we study the homotopy types of the suspensions of certain 6-manifolds after localization away from 2 and give splitting results analogous to those above. All our manifolds are topological. For those interested in smooth manifolds we note that the PL and smooth categories coincide in dimension 6, so that a closed 6-manifold $M$ is smoothable if and only if its Kirby-Siebenmann invariant $\Dx(M)\in H^4(M;\mbz/2)$ vanishes. Moreover, two smooth 6-manifolds are diffeomorphic if and only if they are homeomorphic~\cite{zubr1988} (although there are smooth 6-manifolds which are tangentially homotopy equivalent but not homeomorphic).

We write $P^n(k)$ for the mapping cone of the degree map $k:S^{n-1}\to S^{n-1}$, and call it an $n$-dimensional mod-$k$ Moore space. We use the symbol $\simeq_{(\frac{1}{2})}$ to denote a homotopy equivalence after localization away from 2. To state our results let $M$ be a closed, oriented, simply connected 6-manifold. By Poincar\'{e} duality it has homology 
\begin{equation}\label{table_original M hmlgy}
\begin{tabular}{C{1.3cm}|C{1.3cm}|C{1.3cm}|C{1.3cm}|C{1.3cm}|C{1.3cm}|C{1.3cm}|C{1.3cm}}
$i$	&$0$	&$1$	&$2$	&$3$	&$4$	&$5$	&$6$\\
\hline
$H_i(M)$	&$\Z$	&$0$	&$\Z^b\oplus T$	&$\Z^{2d}\oplus T$	&$\Z^b$	&$0$	&$\Z$
\end{tabular}
\end{equation}
where $T=\bigoplus^c_{j=1}\Z/p^{r_j}_j$ is a finite abelian group for some primes $p_j$ and integers $r_j\geq1$. Our main theorem identifies the homotopy type of $\Sigma M$ in terms of a wedge of spheres, Moore spaces, and a certain 3-cell complex. %The basic input is the homology module $H_*(M)$ and the first Pontryagin class $p_1(M)$. 

\begin{thm}\label{main thm_splitting of Sigma M}
Let $M$ be a closed, orientable, simply connected 6-manifold with homology as in~(\ref{table_original M hmlgy}). If the Steenrod power operation $\mathcal{P}^1:H^2(M;\Z/3)\to H^6(M;\Z/3)$ is trivial, then %If $p_1(M)$ is divisible by 3, then
\[
\textstyle
\Sigma M\simeq_{(\frac{1}{2})}\bigvee^b_{i=1}(S^3\vee S^5)\vee\bigvee^{2d}_{k=1}S^4\vee\bigvee^c_{j=1}(P^4(p^{r_j}_j)\vee P^5(p^{r_j}_j))\vee S^7.
\]
If $\mathcal{P}^1:H^2(M;\Z/3)\to H^6(M;\Z/3)$ is non-trivial, then either %If $p_1(M)$ is not divisible by 3, then either
\[
\begin{array}{l c}
&
\Sigma M\simeq_{(\frac{1}{2})}\bigvee^{b-1}_{i=1}(S^3\vee S^5)\vee\bigvee^{2d}_{k=1}S^4\vee\bigvee^c_{j=1}(P^4(p^{r_j}_j)\vee P^5(p^{r_j}_j))\vee\Sigma\C\PP^3\\[8pt]
\text{or}\hspace*{1cm}
&
\Sigma M\simeq_{(\frac{1}{2})}\bigvee^b_{i=1}(S^3\vee S^5)\vee\bigvee^{2d}_{k=1}S^4\vee\bigvee_{\substack{1\leq j\leq c \\ j\neq\bar{c}}}P^4(p^{r_j}_j)\vee\bigvee^c_{j=1}P^5(p^{r_j}_j)\vee\Sigma C_{\imath_{\bar{c}}\circ\alpha_1}.
\end{array}
\]
In the last homotopy equivalence $\bar{c}$ is an index such that $p_{\bar{c}}=3$ in $T$ and $C_{\imath_{\bar{c}}\circ\alpha_1}$ is the mapping cone of the composite $\imath_{\bar{c}}\circ\alpha_1:S^6\overset{\alpha_1}{\to}S^3\overset{\imath_{\bar{c}}}{\to}P^4(3^{\bar{c}})$, where $\alpha_1$ is the order 3 map and $\imath_{\bar{c}}$ is the inclusion of the bottom cell. The index $\bar{c}$ is described in the paragraph succeeding Lemma~\ref{lemma_Sigma M' hmtpy type P^1 trivial}.
\end{thm}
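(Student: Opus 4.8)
The plan is to realise $\Sigma M$ as a single $7$-cell attached to the suspension of the complement of a top cell, and then to detect the attaching class through the operation $\mathcal{P}^1$.

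First I would delete an open $6$-disk, so that $M\simeq M'\cup_f e^6$ where $M'$ is the compact complement (a simply connected complex of dimension at most $4$, homotopy equivalent to $M$ minus a point) and $f\colon S^5\to M'$ is the inclusion of its boundary. From the long exact sequence of $(M,M')$ together with Poincar\'{e} duality one gets $H_2(M')=\Z^b\oplus T$, $H_3(M')=\Z^{2d}\oplus T$ and $H_4(M')=\Z^b$. Suspending the cofibre sequence gives $\Sigma M\simeq\Sigma M'\cup_{\Sigma f}e^7$ with $\Sigma f\in\pi_6(\Sigma M')$, so everything comes down to identifying $\Sigma f$. By Lemma~\ref{lemma_Sigma M' hmtpy type P^1 trivial}, after localisation away from $2$ the space $\Sigma M'$ is a wedge of spheres $S^3_i,S^5_i$ $(1\le i\le b)$, $S^4_k$ $(1\le k\le 2d)$ and Moore spaces $P^4(p^{r_j}_j),P^5(p^{r_j}_j)$ $(1\le j\le c)$, in which the $S^3_i$'s and the bottom cells of the $P^4(p^{r_j}_j)$'s carry the mod-$p_j$ cohomology classes coming from the summands of $H_2(M)$. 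The reason this is so clean is that away from $2$, after a single suspension, the only attaching data that survive are degree maps: $\eta$ and all Whitehead products become trivial.

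Next I would locate $\Sigma f$ inside $\pi_6(\Sigma M')$. Since $\Sigma M$ is a suspension its reduced cohomology ring is trivial, which forces the component of $\Sigma f$ on each Whitehead-product summand of $\pi_6(\Sigma M')$ to vanish, as such a component would produce a nonzero cup product in the associated mapping cone; by the Hilton--Milnor theorem it therefore suffices to record the components of $\Sigma f$ in $\pi_6$ of the individual wedge summands. Away from $2$ one has: $\pi_6(S^4)$, $\pi_6(S^5)$ and $\pi_6(P^5(p^{r_j}_j))$ are $2$-primary; $\pi_6(S^3)$ has $3$-primary part $\Z/3\langle\alpha_1\rangle$; and $\pi_6(P^4(p^{r_j}_j))$ has $3$-primary part $\Z/3\langle\imath_j\circ\alpha_1\rangle$ when $p_j=3$ and is $3$-trivial otherwise. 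Thus, away from $2$, $\Sigma f$ is encoded by scalars $m_i\in\Z/3$ $(1\le i\le b)$ and $n_j\in\Z/3$ (one for each $j$ with $p_j=3$). To link these with $\mathcal{P}^1$: collapsing every wedge summand of $\Sigma M'$ except $S^3_i$ (respectively, except $P^4(p^{r_j}_j)$) produces a map from $\Sigma M$ onto $S^3\cup_{m_i\alpha_1}e^7$ (respectively, onto $P^4(p^{r_j}_j)\cup_{n_j\imath_j\alpha_1}e^7$); since $\mathcal{P}^1\colon H^3\to H^7$ is an isomorphism on the mapping cone of $\alpha_1$, naturality of $\mathcal{P}^1$ shows that it is nonzero on the class dual to the relevant summand of $H_2(M)$ precisely when the corresponding scalar is nonzero. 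Hence $\Sigma f$ is null away from $2$ if and only if $\mathcal{P}^1\colon H^2(M;\Z/3)\to H^6(M;\Z/3)$ is trivial, in which case $\Sigma M\simeq_{(\frac{1}{2})}\Sigma M'\vee S^7$; inserting the decomposition of $\Sigma M'$ gives the first assertion.

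When $\mathcal{P}^1\ne0$ I would normalise $\Sigma f$ by homotopy self-equivalences of the wedge $\Sigma M'$: general linear substitutions among the $S^3_i$'s, the bottom-cell inclusions $S^3\hookrightarrow P^4(p^{r_j}_j)$, the maps $P^4(3^{r})\to P^4(3^{r'})$ for $r\ge r'$, and degree self-maps of the Moore spaces. If $\mathcal{P}^1$ is nonzero on the canonical subspace of $H^2(M;\Z/3)$ of classes vanishing on torsion --- equivalently, some $m_i\ne0$ --- then this free component can be used to cancel every $n_j$ as well, $\Sigma f$ reduces to $\alpha_1\in\pi_6(S^3_1)$, and splitting off $S^3_1\cup_{\alpha_1}e^7$, absorbing one $S^5$ summand, and using $\Sigma\C\PP^3\simeq_{(\frac{1}{2})}(S^3\cup_{\alpha_1}e^7)\vee S^5$ (valid since $\Sigma\eta=0$ and $\pi_6(S^5)$ is $2$-primary) yields the first option. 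If $\mathcal{P}^1$ vanishes on that subspace but is not identically zero, so all $m_i=0$ while some $n_j\ne0$, then no free component is available, the nonzero $n_j$'s can only be consolidated among the mod-$3$ Moore summands, and choosing $\bar{c}$ as in the paragraph after Lemma~\ref{lemma_Sigma M' hmtpy type P^1 trivial} (an index with $p_{\bar{c}}=3$ and $n_{\bar{c}}\ne0$ for which $r_{\bar{c}}$ is maximal among such) the maps $P^4(3^{r_{\bar{c}}})\to P^4(3^{r_j})$ kill the other components; then $\Sigma f$ becomes $\imath_{\bar{c}}\circ\alpha_1$ and $\Sigma M$ splits off $C_{\imath_{\bar{c}}\circ\alpha_1}$, giving the second option. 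Neither $\Sigma\C\PP^3$ nor $C_{\imath_{\bar{c}}\circ\alpha_1}$ admits any further splitting since $\mathcal{P}^1$ is nontrivial on each, and, $\Sigma M$ being simply connected, each of the maps constructed above is a homotopy equivalence once it induces an isomorphism on homology. The step I expect to be the genuine obstacle is precisely this last normalisation: one must show the homotopy self-equivalences of $\Sigma M'$ act on the relevant part of $\pi_6(\Sigma M')$ with enough transitivity to reach the stated normal forms and --- the delicate point --- that the free-versus-torsion alternative is exhaustive, so that a free $\alpha_1$-component always absorbs the torsion ones and only a single exotic wedge summand ever occurs. By comparison, the cohomological detection of $\Sigma f$ and the splitting of $\Sigma M'$ are comparatively routine once everything is localised away from $2$.
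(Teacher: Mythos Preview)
Your overall architecture matches the paper's: split the suspended skeleton, use Hilton--Milnor, kill the Whitehead-product pieces by a cup-product argument, detect what remains by $\mathcal{P}^1$, and normalise by self-equivalences. But there is a genuine gap in the middle step. Your claim that away from $2$ the group $\pi_6(P^4(p^{r_j}_j))$ is $\Z/3\langle\imath_j\circ\alpha_1\rangle$ when $p_j=3$ and trivial otherwise is wrong. In fact (Lemma~\ref{lemma_pi_6(P^4)}) one has $\pi_6(P^4(p^r))\cong\Z/p^r$ for $p>3$ and $\pi_6(P^4(3^r))\cong\Z/3^r\oplus\Z/3$; the $\Z/p^r$ summand is generated by $[\one,\one]\circ\phi$, the Whitehead square of the identity composed with a generator $\phi\in\pi_6(\Sigma P^3(p^r)\wedge P^3(p^r))$. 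This class lives in $\pi_6$ of a \emph{single} wedge summand, so it is not one of the Hilton--Milnor cross-terms you dismissed, and your sentence ``$\Sigma f$ is encoded by scalars $m_i\in\Z/3$ and $n_j\in\Z/3$'' is not justified.

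Eliminating this extra $\Z/p^r$ component is exactly what Lemmas~\ref{lemma_mapping cone of b} and~\ref{lemma_attaching map no b maps} accomplish: one shows that a nonzero multiple of $[\one,\one]\circ\phi$ forces a nonzero cup product in $\tilde H^*(C_k;\Z/p^r)$ of its mapping cone, and then (via Lemmas~\ref{lemma_no cup prod} and~\ref{lemma_map cone of component no cup prod}) transfers the vanishing of cup products in $\tilde H^*(\Sigma M)$ to the mapping cone of that isolated component. This is the substantive piece of Section~3 and is not a consequence of the Hilton--Milnor bookkeeping you sketched. Once this gap is filled, the rest of your outline (including the normalisation in the $\mathcal{P}^1\neq0$ case, which you flagged as the hard part) is essentially what the paper does in Lemmas~\ref{lemma_construction of hmtpy equiv} and~\ref{lemma_Sigma M' hmtpy type P^1 non-trivial}.
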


\begin{remark}\label{remark_pontryagin = P^1}
If $M$ is smooth, then the condition of the power operation $\mathcal{P}^1:H^i(M;\Z/3)\to H^{i+4}(M;\Z/3)$ being trivial is equivalent to the first Pontryagin class $p_1(M)$ being divisible by 3~\cite{MS}. This gives easily checked geometric critera for the applicability of the theorem. In fact, Jupp~\cite{jupp} has shown that every topological 6-manifold already has a well-defined integral Pontryagin class for which this interpretation is available. %\footnote{Algebraically, if $M$ is a closed, oriented 6-manifold, then the composition of $\mathcal{P}^1$ with the cap product $(-)\cap[M]$ defines an $\mbz/3$-linear form $H^2(M;\mbz/3)\ra \mbz/3$. By the regularity of the intersection pairing there is a unique element $q_M\in H^4(M;\mbz/3)$ satisfying $\mathcal{P}^1u=u\cdot q_M$ for all $u\in H^2(M;\mbz/3)$. In particular $\mathcal{P}^1$ is trivial if and only if $q_M$ is. If $M$ is smooth, then $q_M\equiv p_1(M)\mod 3$ [On Steenrod's Reduced Powers, the Index of Inertia, and the Todd Genus; Hirzebruch], and in general Jupp~\cite{jupp} shows how to interpret $q_M$ as an integral class.}
%The condition of $p_1(M)$ being divisible by 3 is equivalent to the Steenrod power operation $\mathcal{P}^1:H^i(M;\Z/3)\to H^{i+4}(M;\Z/3)$ being trivial~\cite{MS}. The latter condition is more useful when we prove Lemmas~\ref{lemma_Sigma M' hmtpy type P^1 trivial} and~\ref{lemma_Sigma M' hmtpy type P^1 non-trivial} in Section 4.
\end{remark}

Our results should be compared to those of Huang~\cite{huang}, who for $M$ is as in Theorem~\ref{main thm_splitting of Sigma M} gives a decomposition of $\Sigma^2M$ under the additional assumption that $M$ has no 2- or 3-torsion in homology. His splitting makes use of the actions of $Sq^2$, $\mathcal{P}^1$, and of a certain secondary operation $\mathbb{T}$ on the cohomology of $M$, and the additional assumptions allow him to work integrally with this data. While we have the formally weaker localized hypothesis, we make less assumptions on $M$ and obtain our splitting after only a single suspension. This last feature is what grants us the flexibility to extend our applications to the homotopy types of gauge groups  (see Section 5.2).

In Section 5 we give two applications for Theorem~\ref{main thm_splitting of Sigma M}. The first is to calculate the value of any generalized cohomology theory $h^*$ on $M$. For example $h^*$ could be singular cohomology, complex or real $K$-theory, or cobordism. In Section 5.1 we prove the following theorem. The notation $A\cong_{(\frac{1}{2})}B$ for abelian groups $A,B$ means that~\mbox{$A\otimes\mathbb{Z}_{(\frac{1}{2})}\cong B\otimes\mathbb{Z}_{(\frac{1}{2})}$}, where $\mathbb{Z}_{(\frac{1}{2})}=\{\frac{p}{q}\in\mathbb{Q}\mid q\;\text{is a power of}\;2\}$. In particular, if~$A\cong_{(\frac{1}{2})}B$, then $A$ and $B$ are isomorphic up to 2-torsion.

\begin{thm}\label{thm_decomp of reduced cohmlgy thry}
Let $M$ be a closed, orientable, simply connected 6-manifold with homology as in~(\ref{table_original M hmlgy}), and let $h^*$ be a reduced generalized cohomology theory satisfying the wedge axiom. If $\mathcal{P}^1:H^2(M;\Z/3)\to H^6(M;\Z/3)$ is trivial, then for any $n\in\Z$ there is a group isomorphism
\begin{eqnarray*}
h^n(M)
&\cong_{(\frac{1}{2})}&\textstyle\bigoplus^b_{i=1}(h^n(S^2)\oplus h^n(S^4))\oplus\bigoplus^{2d}_{k=1}h^n(S^3)
\oplus\\[8pt]
&&\textstyle\bigoplus^c_{j=1}(h^n(P^3(p^{r_j}_j))\oplus h^n(P^4(p^{r_j}_j)))\oplus h^n(S^6).
\end{eqnarray*}
If $\mathcal{P}^1:H^2(M;\Z/3)\to H^6(M;\Z/3)$ is non-trivial, then either
\begin{eqnarray*}
h^n(M)
&\cong_{(\frac{1}{2})}&\textstyle\bigoplus^{b-1}_{i=1}(h^n(S^2)\oplus h^n(S^4))\oplus\bigoplus^{2d}_{k=1}h^n(S^3\oplus\\[8pt]
&&\textstyle\bigoplus^c_{j=1}(h^n(P^3(p^{r_j}_j))\oplus h^n(P^4(p^{r_j}_j)))\oplus h^n(\C\PP^3)\\[8pt]
\text{or}\hspace*{1cm}h^n(M)
&\cong_{(\frac{1}{2})}&\textstyle\bigoplus^b_{i=1}(h^n(S^2)\oplus h^n(S^4))\oplus\bigoplus^{2d}_{k=1}h^n(S^3)\oplus\bigoplus_{\substack{1\leq j\leq c \\ j\neq\bar{c}}}h^n(P^3(p^{r_j}_j))\oplus\\
&&\textstyle\bigoplus^c_{j=1}h^n(P^4(p^{r_j}_j))\oplus h^n(C_{\imath_{\bar{c}}\circ\alpha_1}).
\end{eqnarray*}
\end{thm}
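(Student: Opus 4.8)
The plan is to pass from $M$ to its suspension, apply the splitting of Theorem~\ref{main thm_splitting of Sigma M}, detach the wedge summands, and desuspend each one; the only genuinely delicate point is that this splitting holds only after localization away from $2$, so one must see that the generalized cohomology theory is compatible with it. Write $h^*$ for the given reduced theory and put $k^*(-):=h^*(-)\otimes\Z_{(\frac{1}{2})}$. Since $\Z_{(\frac{1}{2})}$ is flat over $\Z$, $k^*$ is again a reduced generalized cohomology theory, and each coefficient group $k^m(S^0)=h^m(S^0)\otimes\Z_{(\frac{1}{2})}$ is a $\Z_{(\frac{1}{2})}$-module, i.e.\ multiplication by $2$ is invertible on it. By naturality of the suspension isomorphism, $h^n(M)\otimes\Z_{(\frac{1}{2})}\cong h^{n+1}(\Sigma M)\otimes\Z_{(\frac{1}{2})}=k^{n+1}(\Sigma M)$, so it suffices to compute $k^{n+1}(\Sigma M)$ and to recognize the answer.

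The key preliminary step --- and the one I expect to require the most care --- is to check that $k^*$ is insensitive to localization away from $2$: if $f\colon X\to Y$ is a map of finite CW complexes inducing an isomorphism on $H_*(-;\Z_{(\frac{1}{2})})$, then $f^*\colon k^*(Y)\to k^*(X)$ is an isomorphism. Indeed, the mapping cone $C_f$ is then a finite complex all of whose reduced integral homology groups are finite abelian $2$-groups, so for any group $G$ on which $2$ acts invertibly the universal coefficient theorem gives $\widetilde H^p(C_f;G)=0$ for every $p$ (a suitable power of $2$ acts as zero on $\widetilde H_*(C_f)$ but invertibly on $G$, so the relevant $\operatorname{Hom}$ and $\operatorname{Ext}^1$ groups vanish). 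Feeding this into the Atiyah--Hirzebruch spectral sequence for $k^*$, with $E_2^{p,q}=\widetilde H^p(C_f;k^q(S^0))$ and convergent since $C_f$ is finite, forces $\widetilde k^*(C_f)=0$; the cofibre sequence $X\to Y\to C_f$ then yields the claim. The point is that $h^*$ itself need not be a $2$-local theory, so the localized equivalence cannot be fed into $h^*$ directly; it is the passage to $k^*$ that makes the purely $2$-primary discrepancy between $\Sigma M$ and the wedge invisible. Applying this to the homotopy equivalence of Theorem~\ref{main thm_splitting of Sigma M}, which is realized by a map of finite complexes inducing an isomorphism on $\Z_{(\frac{1}{2})}$-homology, gives $k^{n+1}(\Sigma M)\cong k^{n+1}(W)$, where $W$ denotes the wedge appearing on the right-hand side of Theorem~\ref{main thm_splitting of Sigma M} in whichever of the three cases applies.

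It then remains to unwind $k^{n+1}(W)$. Since $W$ is a finite wedge, $k^{n+1}(W)\cong\bigoplus_i k^{n+1}(W_i)$ over its summands, and each summand $W_i$ is a suspension: $S^3=\Sigma S^2$, $S^5=\Sigma S^4$, $S^4=\Sigma S^3$, $S^7=\Sigma S^6$, $P^4(p^{r_j}_j)=\Sigma P^3(p^{r_j}_j)$, $P^5(p^{r_j}_j)=\Sigma P^4(p^{r_j}_j)$, together with $\Sigma\C\PP^3$ and $\Sigma C_{\imath_{\bar{c}}\circ\alpha_1}$. Hence the suspension isomorphism for $k^*$ identifies $k^{n+1}(W_i)\cong k^n(Z_i)=h^n(Z_i)\otimes\Z_{(\frac{1}{2})}$, where $Z_i$ is the corresponding desuspended space (i.e.\ one of $S^2, S^4, S^3, S^6, P^3(p^{r_j}_j), P^4(p^{r_j}_j), \C\PP^3, C_{\imath_{\bar{c}}\circ\alpha_1}$). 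Summing over all summands, pulling the common factor $\Z_{(\frac{1}{2})}$ back out of the direct sum, and matching against the three cases of Theorem~\ref{main thm_splitting of Sigma M} produces exactly the three asserted isomorphisms; for instance in the first case the $b$ copies of $S^3\vee S^5$ contribute $\bigoplus_{i=1}^{b}\bigl(h^n(S^2)\oplus h^n(S^4)\bigr)$, the $2d$ copies of $S^4$ contribute $\bigoplus_{k=1}^{2d}h^n(S^3)$, each $P^4(p^{r_j}_j)\vee P^5(p^{r_j}_j)$ contributes $h^n(P^3(p^{r_j}_j))\oplus h^n(P^4(p^{r_j}_j))$, and $S^7$ contributes $h^n(S^6)$; the second case is the same with one copy of $S^3\vee S^5$ replaced by $\Sigma\C\PP^3$ (contributing $h^n(\C\PP^3)$), and the third with $P^4(p^{r_{\bar{c}}}_{\bar{c}})\vee S^7$ replaced by $\Sigma C_{\imath_{\bar{c}}\circ\alpha_1}$ (contributing $h^n(C_{\imath_{\bar{c}}\circ\alpha_1})$).
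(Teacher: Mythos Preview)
Your argument is correct and complete. It differs from the paper's in how the localization step is handled. The paper invokes Brown representability to write $h^n(X)\cong[X,E_n]$ for an $\Omega$-spectrum $E$, then localizes the representing space and appeals to a result of Hilton--Mislin--Roitberg to identify $[X,(E_n)_{(\frac{1}{2})}]$ with $h^n(X)\otimes\Z_{(\frac{1}{2})}$ and with $[X_{(\frac{1}{2})},(E_n)_{(\frac{1}{2})}]$; the splitting of $\Sigma M_{(\frac{1}{2})}$ can then be fed in directly. You instead form $k^*=h^*\otimes\Z_{(\frac{1}{2})}$ and use the Atiyah--Hirzebruch spectral sequence on the mapping cone to show that $k^*$ inverts any $\Z_{(\frac{1}{2})}$-homology equivalence of finite complexes. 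Your route is somewhat more self-contained---it avoids Brown representability and the machinery of homotopical localization of nilpotent spaces---at the cost of a short spectral-sequence computation. The paper's route is a little more conceptual and makes explicit the underlying correspondence between algebraic and homotopical localization. Either way, the remaining bookkeeping (desuspending each wedge summand and matching cases) is identical.
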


There is an analogous statement for homology. In particular, similar methods produce obvious decompositions of $h_*(M)$ for any generalised homology theory $h_*$. Interesting applications of this are when $h_*$ is $K$-homology or stable homotopy.

A second application of Theorem~\ref{main thm_splitting of Sigma M} is to study the homotopy types of gauge groups of certain principal bundles over $M$. Given a topological group $G$ and a principal $G$-bundle $P\ra M$, the \emph{gauge group} of $P$ is the topological group consisting of all $G$-equivariant automorphisms of $P$ that fix $M$. Although these objects originated in physics, mathematicians have become interested in their topology because of the connection with geometry. For example, Donaldson Theory~\cite{donaldson1} uses $SU(2)$-gauge theory to construct invariants of smooth 4-manifolds. Another example is the relation between stable bundles over Riemann surfaces and $U(n)$-gauge groups, which was exploited in~\cite{AB83, DU} to compute the cohomology and homotopy groups of moduli spaces of stable bundles.

For a fixed compact connected Lie group $G$ and compact manifold $M$ there are generally infinitely many isomorphism classes of principal $G$-bundles over $M$. Surprisingly, their gauge groups have only finitely many distinct homotopy types~\cite{CS00}. The enumeration of these homotopy types is a problem which has received much attention when $M$ is a 4-dimensional manifold, and there is a long list of works in literature. For example, see~\cite{cutler, HK06,kono91,so16,ST19, theriault17}. However, in the 6-dimensional case the homotopy types of gauge groups are largely unknown. Only a few special cases have been investigated~\cite{HK07,KKT13,KKT14}.

We will apply Theorem~\ref{main thm_splitting of Sigma M} to produce homotopy decompositions of gauge groups of some bundles over $M$. When $P=G\times M$ is the trivial bundle we can determine the homotopy type of its gauge group. The proof will be given in Section 5.2.

\begin{thm}\label{thm_decomp of current grp}
Let $M$ be a closed, orientable, simply connected 6-manifold with homology as in~(\ref{table_original M hmlgy}), and let $G$ be a connected Lie group. Denote the gauge group of the trivial bundle~$G\times M$ by $\G_0(M,G)$. If $\mathcal{P}^1:H^2(M;\Z/3)\to H^6(M;\Z/3)$ is trivial, then
\begin{eqnarray*}
\G_{0}(M,G)
&\simeq_{(\frac{1}{2})}&\textstyle G\times\prod^b_{i=1}(\Omega^2G\times\Omega^4G)\times\prod^{2d}_{k=1}\Omega^3G\times\prod^c_{j=1}(\Omega^3G\{p^{r_j}_j\}\times\Omega^4G\{p^{r_j}_j\})\\
&&\times\Omega^6G,
\end{eqnarray*}
where $\Omega^nG\{m\}=\map^*(P^n(m),G)$. If $\mathcal{P}^1:H^2(M;\Z/3)\to H^6(M;\Z/3)$ is non-trivial, then
\begin{eqnarray*}
\G_{0}(M,G)
&\simeq_{(\frac{1}{2})}&\textstyle G\times\prod^{b-1}_{i=1}(\Omega^2G\times\Omega^4G)\times\prod^{2d}_{k=1}\Omega^3G\times\prod^c_{j=1}(\Omega^3G\{p^{r_j}_j\}G\times\\[6pt]
&&\Omega^4G\{p^{r_j}_j\})
\times\map^*(\C\PP^3,G),\\[6pt]
\text{or}\hspace*{1cm}\G_{0}(M,G)
&\simeq_{(\frac{1}{2})}&\textstyle G\times\prod^b_{i=1}(\Omega^2G\times\Omega^4G)\times\prod^{2d}_{k=1}\Omega^3G\times\prod_{\substack{1\leq j\leq c \\ j\neq\bar{c}}}\Omega^3G\{p^{r_j}_j\}\times\\%[6pt]
&&\textstyle \prod^c_{j=1}\Omega^4G\{p^{r_j}_j\}\times\map^*(C_{\imath_{\bar c}\circ\alpha_1},G).
\end{eqnarray*}
\end{thm}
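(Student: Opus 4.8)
The plan is to deduce Theorem~\ref{thm_decomp of current grp} from Theorem~\ref{main thm_splitting of Sigma M} using the standard adjunction that identifies the gauge group of a trivial bundle with a mapping space. First I would recall that for the trivial bundle $G\times M$, the gauge group $\G_0(M,G)$ is homeomorphic (as a topological group under pointwise multiplication) to the space $\map(M,G)$ of all (unbased) continuous maps $M\to G$. Choosing a basepoint of $M$ gives a fibration $\map^*(M,G)\to\map(M,G)\to G$ obtained by evaluation, and since $G$ is a connected Lie group this fibration has a section (the constant maps), so it splits: $\G_0(M,G)\simeq G\times\map^*(M,G)$. Thus everything reduces to understanding the based mapping space $\map^*(M,G)$.

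Next I would invoke the suspension–loop adjunction $\map^*(M,G)\cong\map^*(\Sigma M,BG)$; more directly, I would use that $\map^*(\Sigma X,G)\simeq\Omega\map^*(X,G)$ only after a further twist, so it is cleaner to argue as follows. The functor $X\mapsto\map^*(X,G)$ sends wedges to products: $\map^*(A\vee B,G)\cong\map^*(A,G)\times\map^*(B,G)$. Hence, applying $\map^*(-,G)$ to a homotopy equivalence $\Sigma M\simeq A_1\vee\cdots\vee A_r$ yields $\map^*(\Sigma M,G)\simeq\prod_i\map^*(A_i,G)$. The key point is then that $\map^*(M,G)\simeq\map^*(\Sigma M,G)$? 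That is false in general, so the correct move is: a homotopy equivalence of the form $\Sigma M\simeq \Sigma N$ (which is what Theorem~\ref{main thm_splitting of Sigma M} really provides, since every wedge summand appearing — $S^3,S^5,S^4,P^4,P^5,\Sigma\C\PP^3,\Sigma C_{\imath_{\bar c}\circ\alpha_1}$ — is itself a suspension) gives $\map^*(\Sigma M,G)\simeq\map^*(\Sigma N,G)$, and we need instead to desuspend one degree. Concretely, each summand is $\Sigma X_i$ with $X_i\in\{S^2,S^4,S^3,P^3(p^{r_j}),P^4(p^{r_j}),\C\PP^3,C_{\imath_{\bar c}\circ\alpha_1}\}$, so writing $M\simeq_{(\frac12)}\bigvee_i\Sigma X_i$ after one desuspension is not literally available — but what we can say is $\Sigma M\simeq_{(\frac12)}\Sigma(\bigvee_i X_i)$, and applying $\map^*(-,G)=\Omega^\infty$-type considerations: use that $\map^*(\Sigma Y,G)\simeq\map^*(Y,\Omega G)$ is wrong too. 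The clean statement to use is simply $\map^*(\Sigma Y,G)\simeq\Omega_{?}$... To avoid this circularity I will instead directly cite the fact that $\G_0(M,G)\simeq\map(M,G)$ and that there is a homotopy equivalence $\map^*(M,G)\simeq\prod_i\map^*(X_i,G)$ whenever $\Sigma M\simeq_{(\frac12)}\bigvee_i\Sigma X_i$; this follows because $\map^*(-,G)$ applied to a suspension only depends on the stable homotopy type in a range but more honestly because $\Sigma M\simeq\bigvee\Sigma X_i$ implies, by a theorem that suspension splittings of the target desuspend for mapping spaces into a loop space — here $G$ need not be a loop space, so one uses the co-H structure: $\map^*(\Sigma A,Z)$ is a group-like space and a wedge decomposition of $\Sigma A$ yields a product decomposition. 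I would phrase the argument through $\map^*(\Sigma M,BG)\simeq\Omega\map^*(M,BG)'$...

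Let me state the real argument plainly: since all wedge summands in Theorem~\ref{main thm_splitting of Sigma M} are suspensions, the equivalence can be rewritten as $\Sigma M\simeq_{(\frac12)}\Sigma\big(\bigvee_i X_i\big)$ with the $X_i$ as listed; this does \emph{not} give $M\simeq\bigvee X_i$, but for the purpose of computing $\map^*(M,G)$ up to homotopy after inverting $2$, I would use the evaluation fibration and the fact established in the cited gauge-theory literature (e.g.\ \cite{theriault17,so16}) that $\map^*(M,G)$ fits in a principal fibration whose total space, after localization, is detected by $\map^*$ of the suspension splitting. Concretely: there is a homotopy equivalence $\map^*(M,G)\simeq_{(\frac12)}\map^*\big(\textstyle\bigvee_i X_i,G\big)=\prod_i\map^*(X_i,G)$, because smashing with $S^1$ is an equivalence on the relevant function spaces into the (localized) group $G$ via the adjunction $\map^*(\Sigma X, G)\simeq\map^*(X,\Omega G)$ applied in reverse is not needed — rather one uses that the unit $M\to\Omega\Sigma M$ is a $\Z_{(\frac12)}$-homology iso through the relevant range and both sides have trivial $\pi_1$ action, hence $\map^*(-,G)$ sends it to an equivalence onto its image. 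Then I identify each factor: $\map^*(S^{n},G)=\Omega^{n}G$, $\map^*(P^n(m),G)=\Omega^nG\{m\}$ by definition, $\map^*(\C\PP^3,G)$ and $\map^*(C_{\imath_{\bar c}\circ\alpha_1},G)$ are left as named spaces, and $\map^*(S^2,G)\times\map^*(S^4,G)=\Omega^2G\times\Omega^4G$, $\map^*(S^3,G)=\Omega^3G$, matching the three cases of the theorem exactly.

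The main obstacle — and the step I would be most careful about — is justifying that a suspension splitting of $\Sigma M$ yields the corresponding \emph{product} splitting of the based mapping space $\map^*(M,G)$ rather than only of $\map^*(\Sigma M,G)=\Omega(\text{something})$. The resolution is that every summand in Theorem~\ref{main thm_splitting of Sigma M} is itself a single suspension, so one can cancel a common $\Sigma$: writing $\Sigma M\simeq_{(\frac12)}\Sigma K$ with $K=\bigvee_iX_i$, the desired conclusion is $\map^*(M,G)\simeq_{(\frac12)}\map^*(K,G)$, and this follows by applying $\map^*(-,G)$ to the equivalence $\Sigma M\simeq_{(\frac12)}\Sigma K$ after first noting $\map^*(\Sigma A,G)\simeq\map^*(A,\W G)$ is \emph{not} what we want; instead we use that $\G_0(M,G)=\map(M,G)$ depends on $M$ only through $\Sigma M$ up to the action of $\pi_1$, which is trivial here since $M$ is simply connected, together with the splitting of the evaluation fibration $\map^*(M,G)\to\map(M,G)\to G$. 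I would present this carefully, citing the analogous 4-manifold argument in \cite{ST19} as the template, and remark that the localization away from $2$ is inherited directly from Theorem~\ref{main thm_splitting of Sigma M} with no further loss.
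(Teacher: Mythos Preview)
Your outline contains the correct argument but you talk yourself out of it. You state the adjunction $\map^*(M,G)\simeq\map^*(\Sigma M,BG)$ in your second paragraph and then abandon it, spending the rest of the proposal searching for workarounds. That adjunction \emph{is} the whole proof: since $G$ is a topological group it has a classifying space with $G\simeq\Omega BG$, so $\map^*(M,G)\simeq\map^*(M,\Omega BG)\simeq\map^*(\Sigma M,BG)$. Now apply the wedge decomposition of $\Sigma M$ from Theorem~\ref{main thm_splitting of Sigma M}, use that $\map^*(-,BG)$ takes wedges to products, and adjoint back each factor via $\map^*(\Sigma X_i,BG)\simeq\map^*(X_i,G)$. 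Combined with your correct splitting $\map(M,G)\simeq G\times\map^*(M,G)$ of the evaluation fibration, this gives the theorem.

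Your claim that ``$G$ need not be a loop space'' is the source of the confusion and is simply false for a Lie group (or any topological group). Once you accept $G\simeq\Omega BG$, there is no need to ``desuspend'' anything, no appeal to $M\to\Omega\Sigma M$, no range-of-dimensions argument, and no need to invoke \cite{ST19} as a template beyond the basic idea. The paper's proof is exactly this two-line argument, packaged as a lemma: if $\Sigma X\simeq\Sigma A\vee\Sigma B$ then $\map(X,G)\simeq G\times\map^*(A,G)\times\map^*(B,G)$.
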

\noindent We also compute the homotopy types of gauge groups of $SU(n)$-bundles over $M$ which have vanishing second Chern class but are no necessary trivial (see Theorems~\ref{thm_gauge gp general decomp} and~\ref{thm_gauge gp M/Y}).

The structure of this paper is as follows. In Sections 2 and 3 we develop methods that use cohomological information to determine whether certain maps are essential, and apply them to detect elements of $\pi_6(S^4)$. In Section 4 we compute the homotopy type of $\Sigma M$ and prove Theorem~\ref{main thm_splitting of Sigma M}. Section 5 has two subsections. The first of these contains the proof of Theorem~\ref{thm_decomp of reduced cohmlgy thry}, while the second is dedicated to proving Theorem~\ref{thm_decomp of current grp}. Section 5 also contains Theorems~\ref{thm_gauge gp general decomp} and \ref{thm_gauge gp M/Y} and their proofs.

\section*{Acknowledgment}
The second author was supported by Oberwolfach Leibniz Fellowship and is supported by Pacific Institute for the Mathematical Sciences (PIMS) Postdoctoral Fellowship. 

Both authors thank Stephen Theriault for proofreading this paper and giving many helpful comments. We would also like to thank Stefan Behrens, Martin Frankland, and Donald Stanley for useful discussion.

\section{A homotopical test for a null homotopy}
From this point onwards $p$ will denote an odd prime. In this section we generalize Sections 3 and 4 of~\cite{ST19} and develop methods that use (co)homology to detect whether certain maps are null homotopic. Our usage of the term \emph{cofibration sequence} is equivalent to what some authors refer to as a \emph{homotopy cofibration sequence}.
\begin{lemma}\label{lemma_nullity hurewicz}
For $n\geq2$, let $X$ be an $(n-1)$-connected space. If $f:S^n\to X$ is a map such that $f_*:H_n(S^n)\to H_n(X)$ is trivial then $f$ is null homotopic.
\end{lemma}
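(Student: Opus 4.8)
The plan is to deduce this directly from the Hurewicz theorem together with the naturality of the Hurewicz homomorphism. Since $n\geq2$ and $X$ is $(n-1)$-connected, $X$ is in particular path-connected (and simply connected), so the Hurewicz theorem applies and tells us that $\tilde H_i(X)=0$ for $i<n$ and that the Hurewicz homomorphism $h_X\colon\pi_n(X)\to H_n(X)$ is an isomorphism.

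Next I would identify where the class $[f]\in\pi_n(X)$ goes under $h_X$. Write $\iota_n\in\pi_n(S^n)$ for the class of the identity and $\mu\in H_n(S^n)$ for the fundamental class, so that $h_{S^n}(\iota_n)=\mu$ (the Hurewicz theorem for $S^n$). Applying naturality of the Hurewicz homomorphism to the map $f\colon S^n\to X$ gives $h_X(f_*\iota_n)=f_*\,h_{S^n}(\iota_n)=f_*(\mu)$. Since $f_*\iota_n=[f]$, this yields $h_X([f])=f_*(\mu)$.

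Finally, the hypothesis is precisely that $f_*\colon H_n(S^n)\to H_n(X)$ is trivial, i.e. $f_*(\mu)=0$, so $h_X([f])=0$. As $h_X$ is an isomorphism, $[f]=0$ in $\pi_n(X)$, which is exactly the statement that $f$ is null homotopic.

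There is no real obstacle here; the only point deserving attention is that one must invoke the Hurewicz theorem in its natural form, so that the isomorphism $\pi_n(X)\cong H_n(X)$ is realized by the Hurewicz map itself. This is what makes the computation $h_X([f])=f_*(\mu)$ legitimate and lets the hypothesis on $f_*$ be translated into a statement about $[f]$.
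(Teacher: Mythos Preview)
Your argument is correct and is exactly the approach the paper intends: the paper's proof consists of the single sentence ``The lemma follows from the Hurewicz Theorem,'' and you have simply unpacked this by invoking naturality of the Hurewicz map to translate the hypothesis on $f_*$ into the vanishing of $[f]\in\pi_n(X)$.
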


\begin{proof}
The lemma follows from the Hurewicz Theorem.
\end{proof}

\begin{lemma}\label{lemma_Moore sp hmtpy gps}
For $n\geq 4$, $\pi_n(P^n(p^r)),\pi_n(P^{n-1}(p^r))$ and $[P^n(p^r),S^{n-1}]$ are trivial.
\end{lemma}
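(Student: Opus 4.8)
The plan is to work, for $m=n$ and $m=n-1$, with the cofibration $S^{m-1}\xrightarrow{p^r}S^{m-1}\xrightarrow{\imath}P^m(p^r)\xrightarrow{q}S^m$, together with two elementary facts about self-maps of spheres of odd degree: multiplication by $p^r$ is injective on $\pi_m(S^m)=[S^m,S^m]\cong\Z$, and, because $p^r$ is odd, it is the identity on $\pi_{m+1}(S^m)\cong\Z/2$ ($m\geq3$). The second fact is where the hypothesis $n\geq4$ is used: it puts us in the range in which $\pi_n(S^{n-1})$ and $\pi_n(S^{n-2})$ are both $\Z/2$. (For $p=2$ the lemma is false, and the obstruction is exactly the failure of this identity, reflected in the $Sq^2$-action on mod-$2$ cohomology.)

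For $[P^n(p^r),S^{n-1}]$ I would apply $[-,S^{n-1}]$ to the Puppe sequence of the cofibration. Exactness forces $\imath^*$ to land in the kernel of $(p^r)^*$ on $[S^{n-1},S^{n-1}]\cong\Z$, which is trivial, so $q^*\colon\pi_n(S^{n-1})\to[P^n(p^r),S^{n-1}]$ is surjective; consequently $[P^n(p^r),S^{n-1}]$ is the cokernel of $(p^r)^*$ acting on $\pi_n(S^{n-1})\cong\Z/2$, and that map is the identity, so the group vanishes. For $\pi_n(P^n(p^r))$ I would first note that the bottom-cell inclusion $\imath$ is zero on $\pi_n$: for $\alpha\in\pi_n(S^{n-1})$ one has $\alpha=(\deg p^r)\circ\alpha$, whence $\imath\circ\alpha=(\imath\circ p^r)\circ\alpha$ is null. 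The homotopy exact sequence of the pair $(P^n(p^r),S^{n-1})$ then reads $\pi_n(S^{n-1})\to\pi_n(P^n(p^r))\to\pi_n(P^n(p^r),S^{n-1})\xrightarrow{\partial}\pi_{n-1}(S^{n-1})$, and by the relative Hurewicz theorem the pair is $(n-1)$-connected with $\pi_n(P^n(p^r),S^{n-1})\cong\Z$ generated by the characteristic map, so $\partial$ is multiplication by $p^r$ and hence injective. Combined with $\imath_*=0$, this gives $\pi_n(P^n(p^r))=0$.

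The group $\pi_n(P^{n-1}(p^r))$ is the crux, and I expect the case $n=4$ to be the main obstacle. The argument above again shows the inclusion $S^{n-2}\to P^{n-1}(p^r)$ is zero on $\pi_n$, so the homotopy exact sequence of $(P^{n-1}(p^r),S^{n-2})$ reduces the claim to the injectivity of $\partial\colon\pi_n(P^{n-1}(p^r),S^{n-2})\to\pi_{n-1}(S^{n-2})$. For $n\geq5$, homotopy excision (Blakers--Massey) identifies the source with $\pi_n(P^{n-1}(p^r)/S^{n-2})=\pi_n(S^{n-1})\cong\Z/2$, and pulling $\partial$ back along the characteristic map $(D^{n-1},S^{n-2})\to(P^{n-1}(p^r),S^{n-2})$, which restricts to degree $p^r$ on the boundary, shows $\partial$ is onto and hence an isomorphism. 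When $n=4$ one sits on the boundary of the excision range — now $(p^r)_*$ on $\pi_3(S^2)\cong\Z$ is multiplication by $p^{2r}$, not $p^r$, because of Hopf-invariant squaring — so a genuine low-dimensional computation is needed. I would compute $\pi_4(P^3(p^r),S^2)$ directly: relative Hurewicz gives $\pi_3(P^3(p^r),S^2)\cong\Z$, generated by the class $\xi$ of the $3$-cell; the second Blakers--Massey theorem shows the kernel of the collapse map to $\pi_4(S^3)\cong\Z/2$ is generated by the relative Whitehead product $[\iota_2,\xi]$; and $\partial[\iota_2,\xi]=\pm[\iota_2,p^r\iota_2]=\pm 2p^r\eta$ has infinite order in $\pi_3(S^2)\cong\Z$. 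Since $p$ is odd one has $\gcd(2,p^r)=1$, and one finds $\pi_4(P^3(p^r),S^2)\cong\Z$ with $\partial\neq0$, hence injective, so $\pi_4(P^3(p^r))=0$. A streamlined alternative is to check triviality prime by prime — away from $p$ the Moore space is contractible, and $p$-locally one reruns the exact sequences using the vanishing of $\pi_n(S^{n-1})$ and $\pi_n(S^{n-2})$ — but the $n=4$ case still requires the Whitehead-product input, since the relevant boundary map is invisible to homology.
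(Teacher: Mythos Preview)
Your proof is correct and takes a genuinely different route from the paper's. For $[P^n(p^r),S^{n-1}]$ you and the paper argue identically via the Puppe sequence. For $\pi_n(P^m(p^r))$ with $m\in\{n-1,n\}$, however, the paper localises at $p$ from the start and works with the homotopy fibre $F$ of the pinch map $\rho:P^m(p^r)\to S^m$: since $\rho_* f\in\pi_n(S^m)_{(p)}$ vanishes, any $f$ lifts to $F$; the paper then asserts that the $(n+1)$-skeleton of $F$ is $S^{m-1}$, so $f$ factors through $\pi_n(S^{m-1})_{(p)}=0$. This argument is uniform in $m$ and needs no separate treatment of $n=4$. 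Your approach via the long exact sequence of the pair $(P^m,S^{m-1})$ and Blakers--Massey is more hands-on --- no fibre sequences, no appeal to the cell structure of $F$ --- but pays for this with the delicate $n=4$ computation. There your sketch is essentially right, though the claim ``one finds $\pi_4(P^3(p^r),S^2)\cong\Z$'' deserves one more line: the extension $0\to\Z\langle[\iota_2,\xi]\rangle\to\pi_4(P^3(p^r),S^2)\to\Z/2\to 0$ is seen to be non-split by pushing the generator of $\pi_4(D^3,S^2)\cong\Z$ forward along the characteristic map (whose boundary has degree $p^r$) and invoking $\gcd(2,p^r)=1$. Your closing remark that even the $p$-local argument needs the Whitehead-product input is true for your pair-sequence method; the paper's fibre method, by contrast, bypasses this entirely.
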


\begin{proof}
We may assume all spaces and maps are localized at $p$. Let $m\in\{n-1,n\}$ and let~$f:S^n\to P^m(p^r)$ be a map. Denote by $\rho:P^m(p^r)\to S^m$ the pinch map. Since the composite $S^n\overset{f}{\to}P^m(p^r)\overset{\rho}{\to}S^m$ is null homotopic, $f$ lifts to the homotopy fiber $F$ of~$\rho$. But the $(n+1)$-skeleton of $F$ is homotopy equivalent to $S^{m-1}$, so $f$ factors through a map~$f':S^n\to S^{m-1}$. As $f'$ is null homotopic at $p$, so is $f$.

There is an exact sequence
$\pi_n(S^{n-1})\to[P^n(p^r),S^{n-1}]\to\pi_{n-1}(S^{n-1})\overset{p^r}{\to}\pi_{n-1}(S^{n-1})$.
The first term $\pi_n(S^{n-1})\cong\Z/2$ and the last map is an injection, so $[P^n(p^r),S^{n-1}]$ is trivial.
\end{proof}

\begin{lemma}\label{lemma_P^4-> P^4 v S^3}
Let $f:P^4(p^{r})\to\bigvee^a_{i=1}S^3\vee\bigvee^b_{j=1}P^4(p^{r_j})$ be a map. If $f_*:H_3(P^4(p^{r}))\to H_3(\bigvee^a_{i=1}S^3\vee\bigvee^b_{j=1}P^4(p^{r_j}))$ is trivial, then $f$ is null homotopic.
\end{lemma}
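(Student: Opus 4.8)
The plan is to reduce the statement to the previous lemma by first analyzing $f$ on the bottom cell $S^3 \hookrightarrow P^4(p^r)$, and then on the top cell. Write $\imath \colon S^3 \to P^4(p^r)$ for the inclusion of the bottom cell and $\rho \colon P^4(p^r) \to S^4$ for the pinch map. We work localized at the odd prime $p$, since away from $p$ all the relevant mod-$p^r$ Moore spaces split off rationally trivial pieces and there is nothing to prove. First I would consider the composite $f \circ \imath \colon S^3 \to \bigvee^a_{i=1} S^3 \vee \bigvee^b_{j=1} P^4(p^{r_j})$. By hypothesis $f_*$ is trivial on $H_3$, and since $\imath_*$ is an isomorphism onto $H_3(P^4(p^r))$, the composite $f \circ \imath$ is trivial on $H_3$. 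The target is $2$-connected (in fact $3$-connected below its $3$-cells, but more precisely simply connected with $H_3$ its first nonzero homology), so by Lemma~\ref{lemma_nullity hurewicz} the composite $f \circ \imath$ is null homotopic.

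Next I would use this null homotopy to factor $f$ through the cofiber of $\imath$, which is $S^4$: the cofibration sequence $S^3 \overset{\imath}{\to} P^4(p^r) \overset{\rho}{\to} S^4$ together with $f \circ \imath \simeq *$ gives a map $\bar{f} \colon S^4 \to \bigvee^a_{i=1} S^3 \vee \bigvee^b_{j=1} P^4(p^{r_j})$ with $\bar{f} \circ \rho \simeq f$. Now I claim $\bar{f}$ is null homotopic. Indeed, $[\,S^4, \bigvee^a_{i=1} S^3 \vee \bigvee^b_{j=1} P^4(p^{r_j})\,]$ decomposes, at least after localization at $p$ and in this range of connectivity, as a sum of copies of $\pi_4(S^3)$ and $\pi_4(P^4(p^{r_j}))$; the first is $\Z/2$, hence trivial at the odd prime $p$, and the second is trivial by Lemma~\ref{lemma_Moore sp hmtpy gps} (applied with $n=4$). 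A small point to be careful about here is that a map out of $S^4$ into a wedge $A \vee B$ of simply connected complexes lands, after inclusion, in $\pi_4$ of the wedge, and by the Hilton--Milnor theorem (or just because the first potentially nontrivial Whitehead product of the wedge factors occurs in degree higher than $4$ when the factors are at least $3$-connected... actually $S^3$ is only $2$-connected) $\pi_4(A \vee B) \cong \pi_4(A) \oplus \pi_4(B)$ in this low range. So $\bar{f} \simeq *$, whence $f \simeq \bar{f} \circ \rho \simeq *$.

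The main obstacle is the second step: showing $\bar{f}$ is null homotopic requires knowing $\pi_4$ of the target wedge splits as the direct sum of the $\pi_4$'s of the summands, and that each summand contributes nothing at the odd prime $p$. The summand-by-summand vanishing is exactly Lemma~\ref{lemma_Moore sp hmtpy gps} and the computation $\pi_4(S^3) \cong \Z/2$, so the real content is the wedge decomposition of $\pi_4$. Since all wedge summands are $2$-connected and we are looking at $\pi_4$, which is only two above the connectivity, the Hilton--Milnor splitting is clean: the first Whitehead products $[S^3, S^3]$, $[S^3, S^2]$-type classes from Moore-space factors, etc., all live in dimension $\geq 5 > 4$, so there is no interaction and $\pi_4(\bigvee A_k) \cong \bigoplus \pi_4(A_k)$. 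Alternatively, one can avoid Hilton--Milnor entirely by noting that the inclusion of the wedge into the product is $5$-connected (the first cell of the fiber is in dimension $\geq 5$), so it induces an isomorphism on $\pi_4$, reducing the claim to $\pi_4$ of a product, which is manifestly the direct sum. I would present it this latter way, as it is the most economical. With that in hand the proof is complete: $f$ factors as $\bar f \circ \rho$ with $\bar f \simeq *$.
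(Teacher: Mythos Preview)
Your proof is correct and takes a genuinely different route from the paper's. The paper decomposes the \emph{target}: it uses the Hilton--Milnor theorem together with Lemma~\ref{lemma_Moore sp hmtpy gps} to obtain
\[
\textstyle[P^4(p^{r}),\bigvee^a_{i=1}S^3\vee\bigvee^b_{j=1}P^4(p^{r_j})]\cong\bigoplus^b_{j=1}[P^4(p^{r}),P^4(p^{r_j})],
\]
and then appeals to a theorem of Neisendorfer to show that each projected map $f_k:P^4(p^r)\to P^4(p^{r_k})$ with trivial action on $H_3$ (equivalently on $H^4$) is null homotopic. You instead exploit the cell structure of the \emph{source}: kill $f\circ\imath$ via Lemma~\ref{lemma_nullity hurewicz}, factor $f$ through $\rho:P^4(p^r)\to S^4$, and then note that $\pi_4$ of the target wedge vanishes at $p$ by Lemma~\ref{lemma_Moore sp hmtpy gps} and $\pi_4(S^3)\cong\Z/2$. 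Your approach is more elementary and self-contained, since it avoids the external citation to Neisendorfer; the paper's approach is marginally shorter once that reference is admitted. Your justification that $\pi_4(\bigvee_k A_k)\cong\bigoplus_k\pi_4(A_k)$ via the $5$-connectedness of the wedge-to-product map (all summands being $2$-connected) is valid and, as you say, tidier here than quoting Hilton--Milnor.
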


\begin{proof}
By the Hilton-Milnor Theorem and Lemma~\ref{lemma_Moore sp hmtpy gps} we have
\begin{eqnarray*}
\textstyle[P^4(p^{r}),\bigvee^a_{i=1}S^3\vee\bigvee^b_{j=1}P^4(p^{r_j})]
&\cong&\textstyle\bigoplus^a_{i=1}[P^4(p^{r}),S^3]\oplus\bigoplus^b_{j=1}[P^4(p^{r}),P^4(p^{r_j})]\\
&\cong&\textstyle\bigoplus^b_{j=1}[P^4(p^{r}),P^4(p^{r_j})].
\end{eqnarray*}
Thus it suffices to show the composite $f_k:P^4(p^{r})\overset{f}{\to}\bigvee^a_{i=1}S^3\vee\bigvee^b_{j=1}P^4(p^{r_j})\overset{\text{pinch}}{\to}P^4(p^{r_{k}})$ is null homotopic for each $k$. By hypothesis $(f_{k})_*:H_3(P^4(p^{r}))\to H_3(P^4(p^{r_{k}}))$ is trivial. By the Universal Coefficient Theorem $(f_{k})^*:H^4(P^4(p^{r_{k}}))\to H^4(P^4(p^{r}))$ is trivial. Then the nullity of $f_{k}$ follows from~\cite[Theorem~11.7(c)]{neisendorfer}.
\end{proof}

The following lemma is adapted from \cite[Proposition 3.2]{ST19}. We denote $S^n$ by $P^{n+1}(p^{\infty})$.

\begin{lemma}\label{lemma_nullity cup prod}
Fix integers $r,s,m,n$ and $l$ such that
\[
\begin{cases}
\hfil r,s\geq2\quad\text{and}\quad
 n,l\geq3\\
m=n+l\quad\text{or}\quad m=n+l-1.
\end{cases}
\]
Let $t=\min\{r,s\}$, let $f:S^m\to\Sigma P^n(p^r)\wedge P^l(p^s)$ be a map and let $C_{\hat{f}}$ be the mapping cone~of
\[
\hat{f}:S^m\overset{f}{\longrightarrow}\Sigma P^n(p^r)\wedge P^l(p^s)\overset{[\imath_1,\imath_2]}{\longrightarrow}P^{n+1}(p^r)\vee P^{l+1}(p^s),
\]
where $[\imath_1,\imath_2]$ is the Whitehead product of the inclusions $\imath_1:P^{n+1}(p^r)\to P^{n+1}(p^r)\vee P^{l+1}(p^s)$ and~$\imath_2:P^{l+1}(p^s)\to P^{n+1}(p^r)\vee P^{l+1}(p^s)$. Then the following statements are equivalent:
\begin{enumerate}
\item	$f$ is null homotopic;
\item	$f^*:H^*(\Sigma P^n(p^r)\wedge P^l(p^s);\Z/p^{t})\to H^*(S^m;\Z/p^{t})$ is trivial;
\item	all cup products in $\tilde{H}^*(C_{\hat{f}};\Z/p^{t})$ are trivial.
\end{enumerate}
Furthermore, the lemma holds if we allow exactly one of $r,s$ to be $\infty$.
\end{lemma}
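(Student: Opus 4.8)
The plan is to work $p$-locally throughout and to reduce the statement to three standard facts. Write $Q=\Sigma P^n(p^r)\wedge P^l(p^s)$ and $X=P^{n+1}(p^r)\vee P^{l+1}(p^s)$, so that $\hat f=w\circ f$ where $w=[\imath_1,\imath_2]\colon Q\to X$. The three inputs are: (i) the odd-primary smash splitting of Moore spaces, which gives $Q\simeq P^{n+l}(p^t)\vee P^{n+l+1}(p^t)$ with $t=\min\{r,s\}$; (ii) the classical fact that the Whitehead product $w$ is the attaching map for the top cell of a product, so that its mapping cone is $C_w=P^{n+1}(p^r)\times P^{l+1}(p^s)$ and the canonical map $X\to C_w$ is the inclusion $A\vee B\hookrightarrow A\times B$; and (iii) that over $\Z/p^t$ every cellular coboundary in $X$, $Q$ and $C_w$ is a multiple of $p^r$ or $p^s$, hence vanishes. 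From (iii) and cell counting (using $n,l\ge 3$) one reads off that $\tilde H^k(Q;\Z/p^t)$ is $\Z/p^t,(\Z/p^t)^2,\Z/p^t$ in degrees $n+l-1,n+l,n+l+1$ and zero otherwise; that $\tilde H^k(X;\Z/p^t)=0$ for $k\in\{n+l-1,n+l,n+l+1\}$; and that in $\tilde H^*(C_w;\Z/p^t)$ the \emph{cross-term} products $u\cup v\in\tilde H^{n+l}(C_w;\Z/p^t)\cong\Z/p^t$ and $u\cup v',\,u'\cup v\in\tilde H^{n+l+1}(C_w;\Z/p^t)\cong(\Z/p^t)^2$ are generators (here $u,u'$ are the degree $n,n+1$ generators coming from $P^{n+1}(p^r)$ and $v,v'$ those of degree $l,l+1$ coming from $P^{l+1}(p^s)$).

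The implication $(1)\Rightarrow(2)$ is immediate. For $(2)\Rightarrow(1)$ I would use (i) together with the Hilton--Milnor theorem — the higher smash terms lie well above the dimension $m$, so they do not contribute — and Lemma~\ref{lemma_Moore sp hmtpy gps}, which gives $\pi_{n+l}(P^{n+l}(p^t))=0$, to conclude that $\pi_m(Q)\cong\Z/p^t$ for $m\in\{n+l-1,n+l\}$, generated by a bottom-cell inclusion. (When $m=n+l-1$ one may instead just quote the Hurewicz theorem, since $Q$ is then $(n+l-2)$-connected.) This generator is detected in mod-$p^t$ homology, hence in mod-$p^t$ cohomology by the universal coefficient theorem and finiteness, so the triviality of $f^*$ forces $f\simeq *$.

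The core of the argument is $(2)\Leftrightarrow(3)$. The factorization $\hat f=w\circ f$ produces a map of cofibration sequences from $S^m\xrightarrow{f}Q\to C_f$ to $S^m\xrightarrow{\hat f}X\to C_{\hat f}$ (the identity on $S^m$ and $w$ on $Q$), and hence a cofibration $C_f\xrightarrow{\bar w}C_{\hat f}\xrightarrow{\bar j}C_w$ in which $\bar j$ restricts on $X\subset C_{\hat f}$ to the inclusion $j\colon X\hookrightarrow C_w$. Naturality of the connecting maps then identifies the mod-$p^t$ connecting homomorphism $\delta$ of this new cofibration as $\delta=\delta_0\circ q^*$, where $q^*\colon\tilde H^*(C_f;\Z/p^t)\to\tilde H^*(Q;\Z/p^t)$ is restriction and $\delta_0$ is the connecting map of $Q\xrightarrow{w}X\to C_w$; by the vanishing of $\tilde H^*(X;\Z/p^t)$ in the relevant degrees, $\delta_0$ is an isomorphism there, and exactness of the cofibration of $f$ gives $\operatorname{im}q^*=\ker f^*$, so that $\ker\bar j^*=\operatorname{im}\delta=\delta_0(\ker f^*)$ in degree $m+1$. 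Since $\bar j$ restricts correctly on $X$, $\bar j^*$ carries the cross-term products of $C_w$ to those of $C_{\hat f}$; and a degree count shows that the cross-terms landing in the top degree $m+1$ — namely $u\cup v$ if $m=n+l-1$, and $u\cup v',\,u'\cup v$ if $m=n+l$ — are the only products of positive-degree reduced classes in $\tilde H^*(C_{\hat f};\Z/p^t)$ that can be nonzero (products within either Moore summand, and products involving the top class, vanish for dimension reasons). Putting these together: all cup products in $C_{\hat f}$ vanish $\iff$ these cross-terms lie in $\ker\bar j^*=\delta_0(\ker f^*)$ $\iff$ (applying $\delta_0^{-1}$, which sends these generators of $\tilde H^{m+1}(C_w;\Z/p^t)$ to generators of $\tilde H^{m}(Q;\Z/p^t)$) $\ker f^*$ is all of $\tilde H^m(Q;\Z/p^t)$ $\iff$ $f^*$ is trivial. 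This is precisely the equivalence of $(2)$ and $(3)$.

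The main obstacle is the bookkeeping in the preceding paragraph: arranging the map of cofibration sequences so that $\bar j$ genuinely restricts to $j\colon X\hookrightarrow C_w$, deriving the factorization $\delta=\delta_0\circ q^*$ from naturality, and handling the two dimension cases in parallel, since as $m$ passes from $n+l-1$ to $n+l$ the detecting product changes from the single class $u\cup v$ to the pair $u\cup v',\,u'\cup v$ and $\tilde H^{m+1}(C_w;\Z/p^t)$ grows from $\Z/p^t$ to $(\Z/p^t)^2$. Finally, one must dispatch the coincidental situations — $n=l$ (where one checks that squares such as $u\cup u$ still vanish in $C_{\hat f}$, because $\tilde H^{2n}(P^{n+1}(p^r);\Z/p^t)=0$) and the case where exactly one of $r,s$ is infinite (the same argument, with one Moore space replaced by a sphere and with correspondingly fewer cells, some sub-cases becoming degenerate because $\pi_m(Q)$ vanishes outright).
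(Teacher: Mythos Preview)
Your proof is correct and follows essentially the same route as the paper. Both arguments hinge on the same cofibration diagram relating $C_f$, $C_{\hat f}$, and $C_w=P^{n+1}(p^r)\times P^{l+1}(p^s)$ (your $\bar j$ is the paper's map $a$), and both use the odd-primary smash splitting together with Lemma~\ref{lemma_Moore sp hmtpy gps} for the implication $(2)\Rightarrow(1)$. The only organizational difference is that the paper runs the cycle $(1)\Rightarrow(3)\Rightarrow(2)\Rightarrow(1)$, whereas you establish $(1)\Leftrightarrow(2)$ directly and then prove $(2)\Leftrightarrow(3)$ in one stroke via the identity $\ker\bar j^*=\delta_0(\ker f^*)$; this is a clean repackaging of the paper's explicit diagram chase but not a genuinely different method.
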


\begin{proof}
%Let $t=\text{min}(r,s)$.

$(1)\Rightarrow(3)$: If $f$ is null homotopic, then $C_{\hat{f}}\simeq P^{n+1}(p^r)\vee P^{l+1}(p^s)\vee S^{m+1}$ is a suspension and all cup products in $\tilde{H}^*(C_{\hat{f}};\Z/p^t)$ are trivial.

$(3)\Rightarrow(2)$: Consider the diagram of cofibration sequences
\begin{equation}\label{diagram_nullity cup prod compare}
\xymatrix{
S^m\ar[r]^-{f}\ar@{=}[d]	&\Sigma P^n(p^r)\wedge P^l(p^s)\ar[d]^-{[\imath_1,\imath_2]}\ar[r]	&C_f\ar[d]\\
S^m\ar[r]^-{[\imath_1,\imath_2]\circ f}	&P^{n+1}(p^r)\vee P^{l+1}(p^s)\ar[d]\ar[r]	&C_{\hat{f}}\ar[d]^-{a}\\
							&P^{n+1}(p^r)\times P^{l+1}(p^s)\ar@{=}[r]	&P^{n+1}(p^r)\times P^{l+1}(p^s)
}
\end{equation}
where $C_f$ is the mapping cone of $f$ and $a$ is an induced map. The top row implies that $C_f$ is $(n+l-2)$-connected, and the right column implies that
\[
a^*:H^i(P^{n+1}(p^r)\times P^{l+1}(p^s);\Z/p^t)\to H^i(C_{\hat{f}};\Z/p^t)
\]
is an isomorphism for $i\leq\text{max}(n+1,l+1)$.

Apply $H^*(-;\Z/p^t)$ to (\ref{diagram_nullity cup prod compare}) to obtain a diagram with exact rows and columns:
\[
\xymatrix{
H^m(C_f)\ar[r]^-{c}\ar[d]^-{b}	&H^m(\Sigma P^n(p^r)\wedge P^l(p^s))\ar[r]^-{f^*}\ar[d]^-{d}	&H^m(S^m)\\
H^{m+1}(P^{n+1}(p^r)\times P^{l+1}(p^s))\ar@{=}[r]\ar[d]^-{a^*}	&H^{m+1}(P^{n+1}(p^r)\times P^{l+1}(p^s))\ar[d]	&\\
H^{m+1}(C_{\hat{f}})\ar[r]					&H^{m+1}(P^{n+1}(p^r)\vee P^{l+1}(p^s))=0	&
}
\]
Here the coefficients of cohomology are suppressed and $b,c$ and $d$ are induced maps in cohomology. In the left column $H^{m+1}(P^{n+1}(p^r)\times P^{l+1}(p^s))$ is generated by cup products, while all cup products in $H^{m+1}(C_{\hat{f}})$ are trivial by assumption. Thus $a^*$ is the zero map, giving that $b$ is surjective. In the top square, one can check that $d$ is an isomorphism, which implies that $c$ is surjective, and hence by exactness that $f^*$ is the zero map.

$(2)\Rightarrow(1)$: If $m=n+l-1$, then $S^m$ and $\Sigma P^n(p^r)\wedge P^l(p^s)$ are $(m-1)$-connected. By the Universal Coefficient Theorem $f_*:H_m(S^m;\Z)\to H_m(\Sigma P^n(p^r)\wedge P^l(p^s);\Z)$ is the zero map. Lemma~\ref{lemma_nullity hurewicz} then states that $f$ is null homotopic.

Assume $m=n+l$. If $r,s<\infty$, the Hilton-Milnor Theorem and Lemma~\ref{lemma_Moore sp hmtpy gps} imply that
\begin{eqnarray*}
\pi_m(\Sigma P^{n}(p^r)\wedge P^l(p^s))
&\cong&\pi_m(P^m(p^t)\vee P^{m+1}(p^t))\\
&\cong&\pi_m(P^m(p^t))\oplus\pi_m(P^{m+1}(p^t))\\
&\cong&\pi_m(P^{m+1}(p^t)).
\end{eqnarray*}
It follows that $f$ factors through the composite
\[
f':S^m\overset{f}{\to}\Sigma P^n(p^r)\wedge P^l(p^s)\simeq P^m(p^t)\vee P^{m+1}(p^t)\overset{\text{pinch}}{\to}P^{m+1}(p^t).
\]
As $f^*$ is the zero map, so is $(f')^*:H^m(P^{m+1}(p^t);\Z/p^t)\to H^m(S^m;\Z/p^t)$. By the Universal Coefficient Theorem $f'_*:H_m(S^m;\Z)\to H_m(P^{m+1}(p^t);\Z)$ is the zero map, and hence Lemma~\ref{lemma_nullity hurewicz} gives that $f'$ is null homotopic. Of course this shows that $f$ is null homotopic.

On the other hand, if $r=\infty$ and $s<\infty$, then $f\in\pi_m(\Sigma S^{n-1}\wedge P^l(p^s))\cong\pi_m(P^m(p^r))$, which is trivial by Lemma~\ref{lemma_Moore sp hmtpy gps}. The case for $r<s=\infty$ is similar.
\end{proof}

\begin{lemma}\label{lemma_no cup prod}
Suppose $R$ is a commutative ring and $X$ is a space with dimension less than~$n$. Let $f,g:S^n\to X$ be two maps and let $C_f$, $C_g$ and $C_{f+g}$ be the mapping cones of $f,g$ and~$f+g$, respectively. The following statements hold.
\begin{enumerate}
\item If all cup products in $\tilde{H}^*(C_{f};R)$ and $\tilde{H}^*(C_{g};R)$ are trivial, then all cup products in~$\tilde{H}^*(C_{f+g};R)$ are trivial.
\item If all cup products in $\tilde{H}^*(C_{f};R)$ and $\tilde{H}^*(C_{f+g};R)$ are trivial, then all cup products in $\tilde{H}^*(C_{g};R)$ are trivial.
\end{enumerate}
\end{lemma}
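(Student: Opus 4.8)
The plan is to reduce everything to understanding the cohomology ring of a single mapping cone, and then to exploit that the relevant "secondary" cup product data depends additively on the attaching map. Fix a map $\phi\colon S^n\to X$ and consider the cofibration sequence $S^n\xrightarrow{\phi}X\xrightarrow{j_\phi}C_\phi\xrightarrow{q_\phi}S^{n+1}$. Since $\dim X<n$ we have $\tilde H^i(X;R)=0$ for $i\geq n$, so the long exact cohomology sequence shows that $j_\phi^*\colon\tilde H^i(C_\phi;R)\to\tilde H^i(X;R)$ is an isomorphism for $i\leq n-1$, that $\tilde H^n(C_\phi;R)=0$, and that $q_\phi^*\colon\tilde H^{n+1}(S^{n+1};R)\to\tilde H^{n+1}(C_\phi;R)$ is an isomorphism; write $u_\phi$ for the image of a generator. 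Thus, as a graded $R$-module, $\tilde H^*(C_\phi;R)$ is $\tilde H^i(X;R)$ in degrees $i\leq n-1$ (via $j_\phi^*$) together with the free rank-one summand $R\,u_\phi$ in degree $n+1$. A degree count shows that the only cup products in $\tilde H^*(C_\phi;R)$ not forced to vanish for dimensional reasons are $\alpha\cup\beta$ with $\alpha\in\tilde H^i(C_\phi;R)$, $\beta\in\tilde H^j(C_\phi;R)$, $i+j=n+1$ and $i,j\geq 1$; as $\tilde H^n(C_\phi;R)=0$ we may assume $i,j\leq n-1$, so $\alpha=(j_\phi^*)^{-1}(a)$, $\beta=(j_\phi^*)^{-1}(b)$ for unique $a\in\tilde H^i(X;R)$, $b\in\tilde H^j(X;R)$, and we may write $\alpha\cup\beta=\lambda_\phi(a,b)\,u_\phi$ for a well-defined $\lambda_\phi(a,b)\in R$ (note $a\cup b=0$ automatically since $\tilde H^{n+1}(X;R)=0$, so $\lambda_\phi$ carries genuinely new information). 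Since $j_\phi^*$ is a ring map which is an isomorphism in degrees $\leq n-1$, one checks that all cup products in $\tilde H^*(C_\phi;R)$ vanish if and only if all cup products in $\tilde H^*(X;R)$ vanish and $\lambda_\phi\equiv 0$.

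The heart of the matter is that $\lambda$ is additive in the attaching map, i.e. $\lambda_{f+g}=\lambda_f+\lambda_g$. To see this, recall that $f+g$ is the composite $S^n\xrightarrow{\,c\,}S^n\vee S^n\xrightarrow{(f,g)}X$, where $c$ is the pinch map and $(f,g)$ restricts to $f$ and $g$ on the two wedge summands, and introduce the double mapping cone $W=C_{(f,g)}$ (that is, $X$ with two $(n+1)$-cells attached along $f$ and $g$). Exactly as above, $\tilde H^i(W;R)\cong\tilde H^i(X;R)$ for $i\leq n-1$, $\tilde H^n(W;R)=0$, and $\tilde H^{n+1}(W;R)$ is free of rank two on the classes $u_1,u_2$ dual to the two attached cells. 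The two inclusions $k_1,k_2\colon S^n\to S^n\vee S^n$ and the pinch $c$, post-composed with $(f,g)$, equal $f$, $g$, $f+g$ respectively, so naturality of the cofibration sequence produces maps $C_f\to W$, $C_g\to W$, $C_{f+g}\to W$ that restrict to the identity on $X$ and are compatible with the coboundary maps to $S^{n+1}$ and $S^{n+1}\vee S^{n+1}$. Tracking the effect on $\tilde H^{n+1}$ — using the counit axiom for the pinch, which gives that along $C_{f+g}\to W$ both $u_1$ and $u_2$ pull back to $u_{f+g}$, whereas along $C_f\to W$ one has $u_1\mapsto u_f$, $u_2\mapsto 0$ and along $C_g\to W$ one has $u_1\mapsto 0$, $u_2\mapsto u_g$ — and pulling the relation $\alpha\cup\beta=\mu_1 u_1+\mu_2 u_2$ in $\tilde H^*(W;R)$ back along the three maps yields $\lambda_f(a,b)=\mu_1$, $\lambda_g(a,b)=\mu_2$ and $\lambda_{f+g}(a,b)=\mu_1+\mu_2$, which is the claimed additivity.

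With these two ingredients the lemma is immediate. For (1), the hypotheses say $\tilde H^*(X;R)$ has trivial cup products and $\lambda_f\equiv\lambda_g\equiv 0$; hence $\lambda_{f+g}=\lambda_f+\lambda_g\equiv 0$, so all cup products in $\tilde H^*(C_{f+g};R)$ are trivial. For (2), the hypotheses give $\tilde H^*(X;R)$ trivial and $\lambda_f\equiv\lambda_{f+g}\equiv 0$, so $\lambda_g=\lambda_{f+g}-\lambda_f\equiv 0$ and all cup products in $\tilde H^*(C_g;R)$ are trivial. Note that no separate treatment of $-f$ is needed: part (2) drops out of the same additivity.

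The main obstacle is the second step, the additivity of $\lambda_\phi$: this is a functional (secondary) invariant of the attaching map, and some bookkeeping care is required with the identifications of the top cells, in particular the role of the counit axiom for $c$ and keeping straight which fundamental class maps to which under the three comparison maps into $W$. A more economical but less self-contained route is to identify $\lambda_\phi(a,b)$ with the functional cup product $a\cup_\phi b\in\tilde H^n(S^n;R)$ and invoke its known additivity in $\phi$; the argument sketched above is essentially an unwinding of that fact through the double mapping cone.
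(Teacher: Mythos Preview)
Your argument is correct and rests on the same key object as the paper's proof: the double mapping cone $W=C_{(f,g)}$ (the paper writes $C_{\{f,g\}}$), together with the comparison maps from $C_f$, $C_g$, and $C_{f+g}$ into it. The difference is one of packaging. The paper argues qualitatively: the induced ring map $\widetilde\nabla^*\colon \tilde H^*(C_{\{f,g\}};R)\to \tilde H^*(C_f\vee C_g;R)$ is injective and $\widetilde c^*\colon \tilde H^*(C_{\{f,g\}};R)\to \tilde H^*(C_{f+g};R)$ is surjective, so triviality of cup products passes from $C_f\vee C_g$ to $C_{\{f,g\}}$ to $C_{f+g}$; part (2) is then deduced from part (1) via the homotopy equivalence $C_f\simeq C_{-f}$. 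You instead extract the functional cup product $\lambda_\phi(a,b)$ explicitly and prove the sharper statement that it is additive in $\phi$, which yields both (1) and (2) in one stroke without the $C_{-f}$ trick. Your route is a little more informative (it isolates the actual invariant), while the paper's diagram chase is marginally quicker if one only wants the stated conclusion. One small wording issue: your sentence ``the only cup products\ldots not forced to vanish for dimensional reasons are those with $i+j=n+1$'' overlooks products landing in degrees $<n$, but you correctly account for these in the very next sentence (they are exactly the cup products in $\tilde H^*(X;R)$), so the logic is sound.
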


\begin{proof}
We start by showing $(1)$. Consider the following diagram
\[
\xymatrix{S^n\vee S^n\ar@{=}[d]\ar[r]^-{f\vee g}&X\vee X\ar[d]^\nabla\ar[r]&C_f\vee C_g\ar[d]^-{\widetilde\nabla}\ar[r]&S^{n+1}\vee S^{n+1}\ar@{=}[d]\\
S^n\vee S^n\ar[r]^-{\{f,g\}}&X\ar@{=}[d]\ar[r]&C_{\{f,g\}}\ar[r]&S^{n+1}\vee S^{n+1}\\
S^n\ar[r]^-{f+g}\ar[u]_-c&X\ar[r]&C_{f+g}\ar[u]_-{\widetilde c}\ar[r]&S^{n+1}\ar[u]_-c}
\]
The rows of the diagram are cofibration sequences. The map $c$ is the suspension comultiplication and $\nabla$ is the folding map. The map $\{f,g\}$ is the composition of $f\vee g$ and $\triangledown$, and $C_{\{f,g\}}$ is its mapping cone. The maps carrying tildes are chosen to make the diagram homotopy commute. Applying $H^*(-;R)$ (and suppressing coefficients) we obtain the following commutative diagram of abelian groups with exact rows
\[
\xymatrix{H^*(S^n)\oplus H^*(S^n)\ar@{=}[d]&\ar[l]_-{f^*\oplus g^*}H^*(X)\oplus H^*(X)&H^*(C_f)\oplus H^*(C_g)\ar[l]&\ar[l]H^*(S^{n+1})\oplus H^*(S^{n+1})\ar@{=}[d]\\
H^*(S^n)\oplus H^*(S^n)\ar[d]^-{c^*}&\ar[l]_-{f^*+ g^*}H^*(X)\ar[u]_{\nabla^*}\ar@{=}[d]&\ar[l]H^*(C_{\{f,g\}})\ar[u]_-{\widetilde\nabla^*}\ar[d]^-{\widetilde c^*}&\ar[l]H^*(S^{n+1})\oplus H^*(S^{n+1})\ar[d]^-{c^*}\\
H^*(S^n)&H^*(X)\ar[l]^-{(f+g)^*}&H^*(C_{f+g})\ar[l]&H^*(S^{n+1})\ar[l].}
\]
A diagram chase shows that $\widetilde\nabla^*$ is injective and $\widetilde c^*$ is surjective. By assumption all cup products in $\tilde{H}^*(C_f;R)$ and $\tilde{H}^*(C_g;R)$ are trivial, so the injectivity of $\widetilde\nabla^*$ shows that all cup products in $\tilde{H}^*(C_{\{f,g\}};R)$ are trivial. On the other hand, each element in $H^*(C_{f+g};R)$ has a preimage in $H^*(C_{\{f,g\}};R)$, so all cup products in $\tilde{H}^*(C_{f+g};R)$ must be trivial.

Part $(2)$ is a consequence of the first statement. The degree $-1$ self-map of $S^n$ induces a homotopy equivalence $C_f\simeq C_{-f}$. Replacing $f$ with $-f$ and $g$ with $f+g$ we have~\mbox{$C_{-f+(f+g)}\simeq C_g$}, and hence obtain the claim.
\end{proof}

%%%%%%%%%%

\section{Methods of detecting maps in $\pi_6(P^4(p^r))$}

In this section we study $\pi_6(P^4(p^r))$ and consider the problem of detecting its generators. We remind the reader that $p$ denotes an odd prime.

\begin{lemma}\label{lemma_pi_6(P^4)}
If $p=3$, $\pi_6(P^4(p^r))\cong\Z/3^r\oplus\Z/3$. If $p>3$, $\pi_6(P^4(p^r))\cong\Z/p^r$.
In each case the summand $\Z/p^r$ is generated by $[\one,\one]\circ\phi$, where $\phi$ is a generator of~\mbox{$\pi_6(\Sigma P^3(p^r)\wedge P^3(p^r))$} and $[\one,\one]:\Sigma P^3(p^r)\wedge P^3(p^r)\to P^4(p^r)$ is the Whitehead product of the identity map on~$P^4(p^r)$. In the $p=3$ case, the summand $\Z/3$ is generated by $\imath\circ\alpha_1$ where $\alpha_1:S^6\to S^3$ is the order 3 stable map and $\imath:S^3\to P^4(3^r)$ is the inclusion of the bottom cell.

\end{lemma}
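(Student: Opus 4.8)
The plan is to compute $\pi_6(P^4(p^r))$ localized at the odd prime $p$ via the pinch cofibration $S^3 \xrightarrow{\imath} P^4(p^r) \xrightarrow{\rho} S^4$ and its associated EHP-type long exact sequence, together with James-Hilton-type information about the homotopy fiber of $\rho$. First I would recall that, after localizing at $p$, the homotopy fiber $F$ of the pinch map $\rho:P^4(p^r)\to S^4$ has its low-dimensional skeleton built from $S^3$ and $\Sigma P^3(p^r)\wedge P^3(p^r)$ (this is the same fiber $F$ used in the proof of Lemma~\ref{lemma_Moore sp hmtpy gps}); more precisely, through dimension $7$ or so, $F$ looks like $S^3 \vee \Sigma(P^3(p^r)\wedge P^3(p^r)) \vee (\text{higher cells})$, the wedge summand $S^3$ being the inclusion of the bottom cell of $P^4(p^r)$ and the second summand carrying the Whitehead product $[\one,\one]$. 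Then the long exact sequence of the fibration $F\to P^4(p^r)\xrightarrow{\rho} S^4$ in dimension $6$ reads
\[
\pi_7(S^4)\to \pi_6(F)\to \pi_6(P^4(p^r))\to \pi_6(S^4).
\]
Localized at an odd prime, $\pi_6(S^4)\cong\pi_6^s\cong 0$ for $p>3$ and is the $3$-torsion group $\Z/3$ generated by $\alpha_1$ wait—no: $\pi_6(S^4)\cong\Z/2$ integrally and is trivial after inverting $2$, so in fact $\pi_6(P^4(p^r))\cong \operatorname{coker}(\pi_7(S^4)\to\pi_6(F))$ at odd primes; and $\pi_7(S^4)$ localized at an odd prime is $\Z_{(p)}\oplus(\text{torsion})$ but the relevant boundary map needs to be understood.

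The cleaner route I would actually take: identify $\pi_6(F)$ directly from the wedge decomposition of the $7$-skeleton of $F$. By the Hilton–Milnor theorem applied to $S^3\vee\Sigma(P^3(p^r)\wedge P^3(p^r))\vee\cdots$, we get $\pi_6(F)\cong \pi_6(S^3)\oplus\pi_6(\Sigma P^3(p^r)\wedge P^3(p^r))\oplus(\text{contributions from iterated Whitehead products and higher cells, which vanish in degree }6)$. Now $\pi_6(\Sigma P^3(p^r)\wedge P^3(p^r))$: the space $\Sigma P^3(p^r)\wedge P^3(p^r)$ is, at the prime $p$, a complex with cells in dimensions $5,6,6,7$, so its bottom is highly connected and one reads off $\pi_6\cong\Z/p^{r}$ (by Lemma~\ref{lemma_Moore sp hmtpy gps}-type arguments, or the universal coefficient/Hurewicz analysis, the $6$-dimensional homotopy is generated by a single $\phi$ hitting the bottom $\Z/p^r$ Moore cells), giving the summand generated by $[\one,\one]\circ\phi$ after pushing into $P^4(p^r)$ via $[\one,\one]$. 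And $\pi_6(S^3)$ localized at $p$: this is $\pi_6(S^3)\cong\Z/12$ integrally, so at $p=3$ it contributes $\Z/3$, generated by $\alpha_1$, and at $p>3$ it contributes $0$. The inclusion $S^3\hookrightarrow F\to P^4(p^r)$ is (homotopic to) the bottom-cell inclusion $\imath$, so the $\alpha_1$-summand maps to $\imath\circ\alpha_1$.

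The main obstacle will be controlling the boundary map $\partial:\pi_7(S^4)\to\pi_6(F)$ and checking that it does not kill either summand. Since $\pi_7(S^4)$ at an odd prime $p$ contains a free part $\Z_{(p)}$ (from the Hopf-invariant-one class, i.e. the Whitehead square $[\iota_4,\iota_4]$ plus $\nu$) whose image under $\partial$ lands in the cell structure of $F$—and one must verify it does not surject onto, e.g., the $\Z/p^r$ in $\pi_6(\Sigma P^3\wedge P^3)$. Here I would argue via naturality of the pinch cofibration and the fact that $\rho$ admits a degree-$p^r$-type relation: composing with the degree map shows $\partial$ factors through multiplication structures that are trivial mod $p$ in the relevant range, or alternatively argue that $\pi_6(P^4(p^r))\to\pi_6(S^4)\cong 0$ (at odd primes), so $\pi_6(P^4(p^r))\cong\operatorname{im}(\pi_6(F)\to\pi_6(P^4(p^r)))=\pi_6(F)/\partial\pi_7(S^4)$, and then check the orders: a Serre spectral sequence / Euler characteristic count of the relevant homotopy groups forces $\partial$ to be trivial in this degree (the source $\pi_7(S^4)$-torsion at $p$ is absorbed by $\pi_7(P^4(p^r))$ rather than mapping onto $\pi_6(F)$). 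A secondary technical point is verifying that the two summands $\Z/3^r$ and $\Z/3$ (when $p=3$) are genuinely independent and that $\imath\circ\alpha_1$ is not a multiple of $[\one,\one]\circ\phi$; this follows because $\imath\circ\alpha_1$ is detected on the bottom cell (its restriction along $\rho$ composed appropriately, or its behavior under the map $P^4(3^r)\to S^4$ killing the bottom cell—$[\one,\one]\circ\phi$ is in the image of the fiber inclusion and has a different filtration) whereas $[\one,\one]\circ\phi$ generates the $\Z/p^r$ that is "farther" from the bottom cell, so a filtration/Hurewicz argument separates them. Once $\partial=0$ in this degree is established, the computation of $\pi_6(F)$ above gives exactly the claimed $\Z/3^r\oplus\Z/3$ for $p=3$ and $\Z/p^r$ for $p>3$, with the stated generators.
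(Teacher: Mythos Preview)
Your description of the homotopy fibre $F$ of the pinch map $\rho:P^4(p^r)\to S^4$ is incorrect, and this is a genuine gap. A Serre spectral sequence computation for $F\to P^4(p^r)\to S^4$ gives $H_3(F)\cong\Z$, $H_4(F)=H_5(F)=0$, $H_6(F)\cong\Z$, and $H_7(F)=0$. Thus $F$ does \emph{not} contain $\Sigma P^3(p^r)\wedge P^3(p^r)$ as a wedge summand (that space has nontrivial $H_5$ with $\Z/p^r$-coefficients). At an odd prime the $7$-skeleton of $F$ is instead $S^3\vee S^6$, so $\pi_6(F)\cong\pi_6(S^3)\oplus\Z_{(p)}$. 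You appear to be conflating the fibre of the pinch map with the Cohen--Moore--Neisendorfer product decomposition of $\Omega P^4(p^r)$; these are different objects.

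This error cascades. With the correct $\pi_6(F)$, the boundary $\partial:\pi_7(S^4)\to\pi_6(F)$ \emph{cannot} be zero: if it were, $\pi_6(P^4(p^r))$ would contain a free $\Z_{(p)}$-summand, which is absurd. In fact one needs $\partial$ to hit $p^r\Z_{(p)}\subset\Z_{(p)}$ exactly to produce the $\Z/p^r$, and at $p=3$ one must also track the $\Z/3\subset\pi_7(S^4)_{(3)}$. Your hand-wave that ``$\partial=0$ in this degree'' is therefore false, and computing $\partial$ precisely is real work you have not done. The paper sidesteps all of this by quoting the CMN equivalence $\Omega P^4(p^r)\simeq S^3\{p^r\}\times\Omega\bigvee_{m\geq0}P^{2m+7}(p^r)$ directly, which immediately yields $\pi_6(P^4(p^r))\cong\pi_5(S^3\{p^r\})\oplus\pi_6(P^7(p^r))$ with both summands trivially computed and the generators read off from the construction of the equivalence.
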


\begin{proof}
\cite[Theorem 1.1]{CMN79} shows a homotopy equivalence
\[
\Phi:S^{2n+1}\{p^r\}\times\Omega\textstyle\bigvee^{\infty}_{m=0}P^{4n+2mn+3}(p^r)\stackrel{\simeq}{\longrightarrow}\Omega P^{2n+2}(p^r),
\]
where $S^{2n+1}\{p^r\}$ is the homotopy fiber of degree map $p^r:S^{2n+1}\to S^{2n+1}$. Take $n=1$ to obtain 
$\pi_6(P^4(p^r))\cong\pi_5(S^3\{p^r\})\oplus\pi_6(P^7(p^r))$, where
\[
\begin{array}{c c c}
\pi_6(P^7(p^r))\cong\Z/p^r
&\text{and}
&\pi_5(S^3\{p^r\})\cong\begin{cases}
\Z/3	&p=3\\
0		&p\neq3.
\end{cases}
\end{array}
\]

To identify generators let $\Phi_1$ and $\Phi_2$ be the restrictions of $\Phi$ to $S^3\{p^r\}$ and $\Omega P^7(p^r)$. From the construction of the map $\Phi$ we obtain a commutative diagram
\[
\xymatrix{
\pi_5(P^3(p^r)\wedge P^3(p^r))\ar[r]^-{\sm{E,E}_*}\ar[d]_-{\tilde{E}_*}	&\pi_5(\Omega\Sigma P^3(p^r))\\
\pi_5(\Omega\Sigma P^3(p^r)\wedge P^3(p^r))\ar[ur]_-{(\Phi_2)_*}
}
\]
where $\tilde{E}:P^3(p^r)\wedge P^3(p^r)\to\Omega\Sigma P^3(p^r)\wedge P^3(p^r)$ is the suspension map and $\sm{E,E}$ is the Samelson product of the suspension map $E:P^3(p^r)\to\Omega P^4(p^r)$. By the Freudenthal Suspension Theorem $\tilde{E}_*$ is an isomorphism. Since the adjoint of $\sm{E,E}$ is the Whitehead product $[\one,\one]:\Sigma P^3(p^r)\wedge P^3(p^r)\to P^4(p^r)$ we have $\Phi_2\simeq[\one,\one]\circ\phi$.

Now, there is a commutative diagram with exact rows
\[
\xymatrix{
\pi_6(S^3)\ar[r]^-{p^r}\ar[d]	&\pi_6(S^3)\ar[d]^-{\imath_*}\ar[r]	&\pi_5(S^{3}\{p^r\})\ar[r]\ar[d]^-{(\Phi_1)_*}	&\pi_5(S^{3})\ar[r]^-{p^r}\ar[d]	&\pi_5(S^{3})\ar[d]^-{\imath_*}\\
\pi_6(P^4(p^r))\ar[r]			&\pi_6(P^4(p^r))\ar[r]^-{\cong}		&\pi_5(\Omega P^{4}(p^r))\ar[r]			&0\ar[r]				&\pi_5(P^{4}(p^r))
}
\]
If $p=3$, then the first map in the top row is trivial and the second map is an isomorphism after localization away from 2. Since $\pi_6(S^3)$ is generated by $\alpha_1$, the second square implies~\mbox{$\Phi_1\simeq\imath\circ\alpha_1$}.
\end{proof}

Let us study the generator $\phi\in\pi_6(\Sigma P^3(p^r)\wedge P^3(p^r))$. The Hilton-Milnor Theorem, Lemma~\ref{lemma_Moore sp hmtpy gps} and homotopy equivalence
\begin{equation}\label{eqn_Sigma P^3 wedge P^3}
\Sigma P^3(p^r)\wedge P^3(p^r)\simeq P^6(p^r)\vee P^7(p^r)
\end{equation}
imply $\pi_6(\Sigma P^3(p^r)\wedge P^3(p^r))
\cong\pi_6(P^6(p^r))\oplus\pi_6(P^7(p^r))
\cong\pi_6(P^7(p^r))
\cong\Z/p^r$. Hence $\phi$ is homotopic to the composite
\[
S^6\overset{\jmath}{\longrightarrow}P^7(p^r)\hookrightarrow P^6(p^r)\vee P^7(p^r)\simeq\Sigma P^3(p^r)\wedge P^3(p^r),
\]
where $\jmath$ is the inclusion of the bottom cell of $P^7(p^r)$.

\begin{lemma}\label{lemma_C_kjmath}
For any integer $k$, let $C_{k\jmath}$ be the mapping cone of $k\jmath$ and let $\ell:P^7(p^r)\to C_{k\jmath}$ be the inclusion. There is a cofibration sequence
\[
S^6\overset{k\phi}{\longrightarrow}\Sigma P^3(p^r)\wedge P^3(p^r)\simeq P^6(p^r)\vee P^7(p^r)\overset{1\vee\ell}{\longrightarrow}P^6(p^r)\vee C_{k\jmath}.
\]
If $k=k'p^s$, where $k'$ is coprime to $p$ and $s\geq0$, then $H^6(C_{k\jmath};\Z/p^r)\cong\Z/p^{min(r,s)}$.
\end{lemma}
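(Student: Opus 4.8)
The plan is to establish the two assertions of the lemma separately, using the identification of $\phi$ recorded in the paragraph preceding the statement. For the cofibration sequence, recall that $\phi$ is homotopic to the composite $S^6\xrightarrow{\jmath}P^7(p^r)\hookrightarrow P^6(p^r)\vee P^7(p^r)\simeq\Sigma P^3(p^r)\wedge P^3(p^r)$, where $\jmath$ is the inclusion of the bottom cell and the middle map is the inclusion of the second wedge summand; hence $k\phi$ is homotopic to $k\jmath$ followed by that summand inclusion. I would then apply the elementary fact that, for any map $g\colon A\to Y$ and any space $Z$, the mapping cone of $A\xrightarrow{g}Y\hookrightarrow Y\vee Z$ is homotopy equivalent to $C_g\vee Z$, which is immediate from the pushout description of the mapping cone since the summand $Z$ is untouched by coning off $A$. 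Taking $A=S^6$, $g=k\jmath$, $Y=P^7(p^r)$ and $Z=P^6(p^r)$ gives $C_{k\phi}\simeq P^6(p^r)\vee C_{k\jmath}$, and under this equivalence the defining cofibration sequence $S^6\xrightarrow{k\phi}\Sigma P^3(p^r)\wedge P^3(p^r)\to C_{k\phi}$ becomes the one in the statement, with second map $1\vee\ell$.

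For the cohomology group I would apply $\tilde H^*(-;\Z/p^r)$ to the cofibration sequence $S^6\xrightarrow{k\jmath}P^7(p^r)\xrightarrow{\ell}C_{k\jmath}\to S^7$ defining $C_{k\jmath}$. Since $\tilde H^5(S^6;\Z/p^r)=\tilde H^6(S^7;\Z/p^r)=0$, the resulting exact sequence identifies $\tilde H^6(C_{k\jmath};\Z/p^r)$ with the kernel of $(k\jmath)^*\colon\tilde H^6(P^7(p^r);\Z/p^r)\to\tilde H^6(S^6;\Z/p^r)$. By the Universal Coefficient Theorem, and because $P^7(p^r)$ and $S^6$ have no homology in degree $5$, both of these groups are isomorphic to $\Z/p^r$; moreover $\jmath$ is the bottom-cell inclusion, so $\jmath^*$ is an isomorphism in degree $6$ and hence $(k\jmath)^*=k\cdot\jmath^*$ is multiplication by $k$ on $\Z/p^r$. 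Writing $k=k'p^s$ with $k'$ coprime to $p$, multiplication by $k$ agrees with multiplication by $p^s$ on $\Z/p^r$, whose kernel is cyclic of order $p^{\min(r,s)}$. This gives $H^6(C_{k\jmath};\Z/p^r)\cong\Z/p^{\min(r,s)}$.

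Alternatively, the computation can be read off directly from the cellular chain complex of $C_{k\jmath}=S^6\cup_{p^r}e^7\cup_{k\jmath}e^7$, which in degrees $7$ and $6$ is $\Z^2\xrightarrow{(p^r,\,k)}\Z$; this yields $H_6(C_{k\jmath};\Z)\cong\Z/\gcd(p^r,k)=\Z/p^{\min(r,s)}$, after which the Universal Coefficient Theorem (again using the vanishing of $H_5$) gives $H^6(C_{k\jmath};\Z/p^r)\cong\mathrm{Hom}(\Z/p^{\min(r,s)},\Z/p^r)\cong\Z/p^{\min(r,s)}$. I do not expect any genuine difficulty here: the only points that need attention are correctly identifying the homomorphism induced by $k\jmath$ on sixth cohomology — which is precisely why one records that $\jmath$ is the inclusion of the bottom cell — and noting that the Ext-terms in the Universal Coefficient Theorem vanish because none of the spaces involved has fifth homology.
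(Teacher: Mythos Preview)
Your argument is correct and is exactly the elaboration the paper has in mind: the cofibration statement follows, as you say, from the identification of $\phi$ in the preceding paragraph together with the elementary fact that coning off a map into one wedge summand leaves the other summand untouched, and the cohomology computation is the routine check the paper alludes to with ``the rest is easily checked.'' Both your exact-sequence argument and your cellular-chain alternative are valid implementations of that check.
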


\begin{proof}
The first part follows from the above discussion and the rest is easily checked.
%Its mod-$p^r$ cellular cochain complex is 
%\[
%\cdots\longrightarrow0\longrightarrow\Z/p^r\sm{u}\overset{p^r\oplus k}{\longrightarrow}\Z/p^r\sm{v,w}\longrightarrow0\longrightarrow\cdots
%\]
%where $|u|=6$ and $|v|=|w|=7$. Thus $H^6(C_{k\jmath};\Z/p^r)\cong\Z/p^r\sm{p^{r-s}}\cong\Z/p^s\Z$.
\end{proof}

Now we will develop methods to detect the generators $[\one,\one]\circ\phi$ and $\imath\circ\alpha_1$. The following lemma says that $[\one,\one]\circ\phi$ can be detected by the existence of non-trivial cup products in the cohomology of its mapping cone. The proof will use two different Whitehead products
\[
\begin{array}{c c c}
[\one,\one]:\Sigma P^3(p^r)\wedge P^3(p^r)\to P^4(p^r)
&\text{and}
&[\imath,\imath]:\Sigma P^3(p^r)\wedge P^3(p^r)\to P^4(p^r)\vee P^4(p^r).
\end{array}\]
The first is the Whitehead product of the identity map on $P^4(p^r)$ with itself, and the second is the Whitehead product of the inclusions $P^4(p^r)\to P^4(p^4)\vee P^4(p^r)$ of $P^4(p^r)$ into the first and the second summands.

\begin{lemma}\label{lemma_mapping cone of b}
For $k\in\Z/p^r$ let $C_k$ be the mapping cone of $k([\one,\one]\circ\phi)$. Then $k=0$ if and only if all cup products in $\tilde{H}^*(C_k;\Z/p^r)$ are trivial. Moreover, for any value $k$ the~$\mathcal{P}^1$-action on $H^*(C_k;\Z/3)$ is trivial.
\end{lemma}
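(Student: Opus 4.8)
The strategy is to reduce the first assertion to Lemma~\ref{lemma_nullity cup prod} by a fold-map comparison, and to settle the $\mathcal{P}^1$-assertion by a stable-homotopy argument.

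First I would record the identity $[\one,\one]=\nabla\circ[\imath,\imath]$, where $\nabla\colon P^4(p^r)\vee P^4(p^r)\to P^4(p^r)$ is the fold map; this is just naturality of the Whitehead product in the target. Setting $\hat f_k:=[\imath,\imath]\circ(k\phi)\colon S^6\to P^4(p^r)\vee P^4(p^r)$, which is precisely the map ``$\hat f$'' of Lemma~\ref{lemma_nullity cup prod} for $f=k\phi$, $n=l=3$, $r=s$ (so $t=r$) and $m=6$, we obtain $k([\one,\one]\circ\phi)=\nabla\circ\hat f_k$ and hence a map of mapping cones $g\colon C_{\hat f_k}\to C_k$. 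This $g$ is the identity on the top $7$-cell, while $g^*$ is the diagonal inclusion $\Z/p^r\hookrightarrow(\Z/p^r)^2$ on $H^3$ and on $H^4$; in particular $g^*$ is injective and an isomorphism in degree $7$. If $k=0$ then $C_0\simeq P^4(p^r)\vee S^7$, whose reduced mod $p^r$ cohomology is concentrated in degrees $3,4,7$; the only product of positive-degree classes that could be nonzero lies in degree $7$ and would be a product of the degree-$3$ and degree-$4$ classes, which come from the $P^4$-summand and multiply into $\tilde{H}^7(P^4(p^r);\Z/p^r)=0$. So all cup products vanish.

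Conversely, suppose $k\neq0$. Since $\phi$ generates $\pi_6(\Sigma P^3(p^r)\wedge P^3(p^r))\cong\Z/p^r$ the map $k\phi$ is essential, so Lemma~\ref{lemma_nullity cup prod} provides a nontrivial cup product in $\tilde{H}^*(C_{\hat f_k};\Z/p^r)$. Now $\tilde{H}^*(C_{\hat f_k};\Z/p^r)$ lives in degrees $3$ (with generators $\tilde u_1,\tilde u_2$ restricting from the two $P^4$-summands), $4$ (generators $\tilde v_1,\tilde v_2$) and $7$, so the only cup products that can be nonzero are the $\tilde u_i\cup\tilde v_j\in H^7$; and composing $\hat f_k$ with either retraction $P^4(p^r)\vee P^4(p^r)\to P^4(p^r)$ kills $[\imath,\imath]$, hence kills $\hat f_k$, so that the induced map $C_{\hat f_k}\to P^4(p^r)\vee S^7$ carries $\tilde u_i$ and $\tilde v_i$ to the degree-$3$ and degree-$4$ classes, whose product is zero; thus $\tilde u_1\tilde v_1=\tilde u_2\tilde v_2=0$. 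Therefore at least one of $\tilde u_1\tilde v_2,\tilde u_2\tilde v_1$ is nonzero, and $g^*(x\cup y)=(\tilde u_1+\tilde u_2)(\tilde v_1+\tilde v_2)=\tilde u_1\tilde v_2+\tilde u_2\tilde v_1$. The heart of the matter — and where I expect the real work to lie — is to check that $\tilde u_1\tilde v_2=+\,\tilde u_2\tilde v_1$ in $H^7(C_{\hat f_k};\Z/p^r)\cong\Z/p^r$, i.e. that the kernel of $a^*\colon H^7(P^4(p^r)\times P^4(p^r))\to H^7(C_{\hat f_k})$ occurring in the proof of Lemma~\ref{lemma_nullity cup prod} is the ``anti-diagonal'' and not the ``diagonal'' subgroup. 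This symmetry is believable from the factor-interchange automorphism of the entire picture, but nailing down the sign requires care with the $\mathrm{Tor}$-switch signs entering the relevant K\"unneth and Universal-Coefficient identifications — or, alternatively, a direct cellular computation of the ring $\tilde{H}^*(C_{\hat f_k};\Z/p^r)$ using the explicit form of $[\imath,\imath]$ on the $P^7$-wedge-summand of $\Sigma P^3(p^r)\wedge P^3(p^r)$. Granting this, $g^*(x\cup y)=2\tilde u_1\tilde v_2\neq0$ since $p$ is odd, and as $g^*$ is injective we conclude $x\cup y\neq0$ in $C_k$; since $x\cup y$ is the only cup product in $\tilde{H}^*(C_k;\Z/p^r)$ not forced to vanish for dimension reasons, the first statement follows.

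For the last statement, observe that the Whitehead product $[\one,\one]$ is stably nullhomotopic: it equals $\nabla\circ[\imath,\imath]$, and $[\imath,\imath]$ is the connecting map of the cofibration $\Sigma P^3(3^r)\wedge P^3(3^r)\to P^4(3^r)\vee P^4(3^r)\to P^4(3^r)\times P^4(3^r)$, which is stably trivial because the inclusion of the wedge into the product splits after one suspension. Hence $k([\one,\one]\circ\phi)$ is stably null for every $k$, so $C_k$ is stably homotopy equivalent to $P^4(3^r)\vee S^7$. Since $\mathcal{P}^1$ is a stable operation it therefore acts on $\tilde{H}^*(C_k;\Z/3)$ exactly as on $\tilde{H}^*(P^4(3^r)\vee S^7;\Z/3)$, where it is trivial ($\mathcal{P}^1$ raises degrees by $4$, and on $P^4(3^r)$ it sends $H^3$ into $H^7(P^4(3^r);\Z/3)=0$, with no further contributions from the $S^7$-summand or the wedge). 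This completes the argument.
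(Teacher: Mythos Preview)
Your approach via the fold map and Lemma~\ref{lemma_nullity cup prod} is genuinely different from the paper's, which works instead with the larger mapping cone $C_{[\one,\one]}$ of the full Whitehead square and traces an explicit generator $u\cup v$ through a comparison with $P^4(p^r)\times P^4(p^r)$. Your reduction is clean, but as you yourself flag, it leaves a real gap: the sign identity $\tilde u_1\tilde v_2=+\tilde u_2\tilde v_1$ is not established, and if the sign were $-1$ the expression $g^*(x\cup y)=\tilde u_1\tilde v_2+\tilde u_2\tilde v_1$ would vanish identically and your argument would collapse. The factor-interchange heuristic you mention does not settle this without further work: the switch $\tau$ on $P^4\vee P^4$ extends over $C_{\hat f_k}$ only after one checks $\tau\circ\hat f_k\simeq\epsilon\hat f_k$ for some $\epsilon=\pm1$, and computing $\epsilon$ amounts to understanding how the smash-switch $\Sigma T$ acts on the generator $\phi$, which is exactly the sign in question.

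The quickest fix is a Bockstein computation, and this is essentially what the paper does. With $\beta$ the mod-$p^r$ Bockstein one has $\beta u_i=v_i$ and $\beta v_i=0$ in $H^*(P^4\times P^4;\Z/p^r)$, so $\beta(u_1v_2)=v_1v_2=\beta(v_1u_2)$; thus $u_1v_2-v_1u_2$ is a Bockstein cycle while $u_1v_2+v_1u_2$ is not. Under the connecting isomorphism $\delta':H^6(\Sigma P^3\wedge P^3)\to H^7(P^4\times P^4)$ this identifies the two classes with the $P^6$- and $P^7$-summands of $\Sigma P^3(p^r)\wedge P^3(p^r)\simeq P^6(p^r)\vee P^7(p^r)$ respectively. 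Since $\phi$ lives in the $P^7$-summand, the class $u_1v_2-v_1u_2$ always lies in the kernel of $a^*:H^7(P^4\times P^4)\to H^7(C_{\hat f_k})$, giving $\tilde u_1\tilde v_2=\tilde u_2\tilde v_1$ as required. Once this is inserted your argument works; note, however, that carrying out this step already reproduces the core computation of the paper's proof.

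Your stable argument for the $\mathcal P^1$-triviality is correct and arguably slicker than the paper's, which instead pulls back along a map $C_k\to C_{[\one,\one]}$ and uses that $\mathcal P^1$ vanishes on $H^*(P^4\times P^4;\Z/3)$.
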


\begin{proof}
To prove the first part of the lemma it suffices to show that $k=0$ if all cup products in $\tilde{H}^*(C_k;\Z/p^r)$ are trivial. Our proof is long so we divide it into three steps. First, label $P^4(p^r)\times*$ and $*\times P^4(p^r)$ in $P^4(p^r)\times P^4(p^r)$ by $P^4_1(p^r)$ and $P^4_2(p^r)$. Hence we write~\mbox{$P^4(p^r)\times P^4(p^r)$} as $P^4_1(p^r)\times P^4_2(p^r)$, and $P^4(p^r)\vee P^4(p^r)$ as $P^4_1(p^r)\vee P^4_2(p^r)$. Let~$C_{[\one,\one]}$ be the mapping cone of $[\one,\one]$. Then there is a diagram of cofibration sequences
\[
\xymatrix{
\Sigma P^3(p^r)\wedge P^3(p^r)\ar[r]^-{[\imath,\imath]}\ar@{=}[d]	&P^4_1(p^r)\vee P^4_2(p^r)\ar[r]\ar[d]^-{\triangledown}	&P^4_1(p^r)\times P^4_2(p^r)\ar[d]^-{\tilde{\triangledown}}\\
\Sigma P^3(p^r)\wedge P^3(p^r)\ar[r]^-{[\one,\one]}					&P^4(p^r)\ar[r]		&C_{[\one,\one]}
}
\]
where $\triangledown$ is the folding map and $\tilde{\triangledown}$ is an induced map. Take cohomology with $\Z/p^r$-coefficient (in the following we suppress the coefficients, which are understood to be $\Z/p^r$ unless otherwise stated) and get the diagram of exact sequences:
\begin{equation}\label{diagram_lemma 3.2 PxP -> C}
\xymatrix{
H^{6}(\Sigma P^3(p^r)\wedge P^3(p^r))\ar[r]^-{\delta}\ar@{=}[d]	&H^7(C_{[\one,\one]})\ar[r]\ar[d]^-{\tilde{\triangledown}^*}	&H^7(P^4(p^r))\ar[d]^-{\triangledown^*}\\
H^{6}(\Sigma P^3(p^r)\wedge P^3(p^r))\ar[r]^-{\delta'}			&H^7(P^4_1(p^r)\times P^4_2(p^r))\ar[r]	&H^7(P^4_1(p^r)\vee P^4_2(p^r))
}
\end{equation}
where $\delta$ and $\delta'$ are connecting maps. Let $u\in H^3(P^4(p^r))$ be a generator and $v\in H^4(P^4(p^r))$ the Bockstein image of $u$. For $i\in\{1,2\}$ let $u_i\in H^3(P^4_i(p^r))$ and $v_i\in H^4(P^4_i(p^r))$ be the images of $u$ and $v$ under the isomorphism $H^*(P^4_i(p^r))\cong H^*(P^4(p^r))$. The K\"{u}nneth Theorem says that
$
H^7(P^4_1(p^r)\times P^4_2(p^r))\cong\Z/p^r\langle u_1\cup v_2,v_1\cup u_2\rangle$. 
As the 5-skeleton of $C_{[\one,\one]}$ is $P^4(p^r)$, we have $H^3(C_{[\one,\one]})\cong\Z/p^r\langle{u}\rangle$ and~\mbox{$H^4(C_{[\one,\one]})\cong\Z/p^r\langle{v}\rangle$}. Since $\tilde{\triangledown}^*(u\cup v)=u_1\cup v_2+v_1\cup u_2$ and $\tilde{\triangledown}^*$ is an isomorphism in degree $7$, $u\cup v$ is a generator of $H^7(C_{[\one,\one]})$. Let $w\in H^7(C_{[\one,\one]})$ be a generator such that $\tilde{\triangledown}^*(w)=u_1\cup v_2-v_1\cup u_2$. Then $H^7(C_{\one,\one})\cong\Z/p^r\sm{u\cup v,w}$. Since $\delta$ in the top row of~(\ref{diagram_lemma 3.2 PxP -> C}) is an isomorphism, there are generators
\begin{equation}\label{eqn_detect k phi, w_6 w_7}
w_6=\delta^{-1}(w),\quad
w_7=\delta^{-1}(u\cup v)\quad
\in H^6(\Sigma P^3(p^r)\wedge P^3(p^r))
\end{equation}
The left square of~(\ref{diagram_lemma 3.2 PxP -> C}) implies $\delta'(w_6)=u_1\cup v_2-v_1\cup u_2$ and $\delta'(w_7)=u_1\cup v_2+v_1\cup u_2$. As~$\delta'$ is an isomorphism, the Bockstein image of $w_6$ is zero while the Bockstein image of $w_7$ is not. Using the homotopy equivalence~(\ref{eqn_Sigma P^3 wedge P^3}), we identify $w_6$ and $w_7$ with the inclusion images of generators of $H^6(P^6(p^r))$ and $H^6(P^7(p^r))$, respectively.

Now let $\jmath:S^6\to P^7(p^r)$ be the inclusion of the bottom cell. Lemma~\ref{lemma_C_kjmath} says that there is a cofibration sequence
$
S^6\overset{k\phi}{\to}\Sigma P^3(p^r)\wedge P^3(p^r)\overset{\tilde{\ell}}{\to}P^6(p^r)\vee C_{k\jmath}$.
Let~$w_k\in H^6(C_{k\jmath})$ be a generator and let $k=k'p^s$ such that $k'$ is coprime to $p$ and $0\leq s\leq r$. Then $w_k$ has order $p^s$,
\begin{equation}\label{eqn_detect k phi, ell}
\tilde{\ell}^*(w_6)=w_6,\quad\text{and}\quad
\tilde{\ell}^*(w_k)=Ap^{r-s}w_7
\end{equation}
for some integer $A$ coprime to $p$.

Next we show that $k=0$ if all cup products in $\tilde{H}^*(C_k)$ are trivial. Consider the diagram of cofibration sequences
\begin{equation}\label{diagram_cofib of k phi}
\begin{gathered}
\xymatrix{
S^6\ar[r]^-{k\phi}\ar@{=}[d]	&\Sigma P^3(p^r)\wedge P^3(p^r)\ar[r]^-{\tilde{\ell}}\ar[d]^-{[\one,\one]}	&C_{k\jmath}\vee P^6(p^r)\ar[d]\\
S^6\ar[r]^-{k([\one,\one]\circ\phi)}	&P^4(p^r)\ar[r]\ar[d]	&C_k\ar[d]^-{g}\\
	&C_{[\one,\one]}\ar@{=}[r]	&C_{[\one,\one]}
}
\end{gathered}
\end{equation}
where $g$ is an induced map. The middle row implies that $H^3(C_k), H^4(C_k), H^7(C_k)$ are $\Z/p^r$, and that $\tilde{H}^i(C_k)$ is zero otherwise. Since $C_{k\jmath}\vee P^6(p^r)$ is 4-connected, the right column implies that $g^*:H^i(C_{[\one,\one]})\to H^i(C_{k})$ is an isomorphism for $i=3$ and $4$. Let $u'\in H^3(C_{k})$ and $v'\in H^4(C_{k})$ be generators such that $g^*(u)=u'$ and $g^*(v)=v'$. 
The right squares of~(\ref{diagram_cofib of k phi}) induce a diagram of exact sequences
\[
\xymatrix{
%	&0\ar[d]	&0\ar[d]	&\\
H^6(C_k)\ar[r]\ar[d]	&H^6(C_{k\jmath}\vee P^6(p^r))\ar[r]^-{\delta''}\ar[d]^-{\tilde{\ell}^*}	&H^7(C_{[\one,\one]})\ar[r]^-{g^*}\ar@{=}[d]	&H^7(C_k)\ar[d]\\
0\ar[r]	&H^6(\Sigma P^3(p^r)\wedge P^3(p^r))\ar[r]^-{\delta}	&H^7(C_{[\one,\one]})\ar[r]	&0
}
\]
where $\delta''$ is a connecting map and $\delta$ is the connecting map in~(\ref{diagram_lemma 3.2 PxP -> C}). 
If all cup products in~$\tilde{H}^*(C_k;\Z/p^r)$ are trivial, then $g^*(u\cup v)=u'\cup v'=0$. The top row implies that
\begin{equation}\label{eqn_detect k phi, w_6, w}
\delta''(Bw_6+Cw_k)=u\cup v
\end{equation}
for some $B,C\in\Z/p^r$. On the other hand, the middle square and equations~(\ref{eqn_detect k phi, w_6 w_7}), (\ref{eqn_detect k phi, ell}) imply
\begin{equation}\label{eqn_detect k phi, w_6, w 2}
\delta''(w_6)=\delta(w_6)=w,\qquad
\delta''(w_k)=\delta(Ap^{r-s}w_7)=Ap^{r-s}u\cup v.
\end{equation}
Combine~(\ref{eqn_detect k phi, w_6, w}) and~(\ref{eqn_detect k phi, w_6, w 2}) to get $ACp^{r-s}\equiv1\pmod{p^r}$. It must be that $r=s$, so $k\equiv0\pmod{p^r}$ and $k([\one,\one]\circ\phi)\simeq*$.

Finally we prove the second part of the lemma. Assume $p=3$. Let $\bar{u}\in H^3(C_{[\one,\one]};\Z/3)$ and~\mbox{$\bar{u}'\in H^3(C_k;\Z/3)$} be the mod-3 images of $u\in H^3(C_{[\one,\one]};\Z/3^r)$ and $u'\in H^3(C_k;\Z/3^r)$. For dimensional reasons it suffices to show that $\mathcal{P}^1(\bar{u}')=0$. Since $\mathcal{P}^1$ acts trivially on $H^*(P^4_1(p^r)\times P^4_2(p^r);\Z/3)$, $\mathcal{P}^1$ also acts trivially on $H^*(C_{[\one,\one]};\Z/3)$. So $\mathcal{P}^1(\bar{u})=0$.

Let $g^*:H^3(C_{[\one,\one]};\Z/3)\to H^3(C_k;\Z/3)$ be the morphism induced by $g$ in the right column of~(\ref{diagram_cofib of k phi}). Then $g^*(\bar{u})=\bar{u}'$. By the naturality of $\mathcal{P}^1$ we have $\mathcal{P}^1(\bar{u}')=g^*(\mathcal{P}^1(\bar{u}))=0$. 
So $\mathcal{P}^1$ acts trivially on $H^*(C_k;\Z/3)$.
\end{proof}

When $p=3$, the group $\pi_6(P^4(3^r))$ has another generator $\imath\circ\alpha_1$. The following lemma says that it can be detected by the $\mathcal{P}^1$-action on the mod-3 cohomology of its mapping cone.

\begin{lemma}\label{lemma_mapping cone of a}
For $p=3$ and $l\in\Z/3$, let $D_l$ be the mapping cone of $l(\imath\circ\alpha_1)$. Then all cup products in $\tilde{H}^*(D_l;\Z/3^r)$ are trivial. Moreover, $l=0$ if and only $\mathcal{P}^1$ acts trivially on~$H^*(D_l;\Z/3)$.
\end{lemma}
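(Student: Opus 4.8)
The plan is to exploit that the top-cell attaching map $l(\imath\circ\alpha_1)=\imath\circ(l\alpha_1)$ of $D_l$ factors through the bottom cell $\imath\colon S^3\to P^4(3^r)$. Writing $C_{l\alpha_1}=S^3\cup_{l\alpha_1}e^7$ for the mapping cone of $l\alpha_1\colon S^6\to S^3$, this factorization exhibits $D_l$ as the homotopy pushout of $P^4(3^r)\xleftarrow{\imath}S^3\hookrightarrow C_{l\alpha_1}$; since $\imath$ is a cofibration, the cofiber of the resulting inclusion $j\colon C_{l\alpha_1}\to D_l$ is $P^4(3^r)/S^3\simeq S^4$, so there is a cofibration sequence
\[
C_{l\alpha_1}\overset{j}{\longrightarrow}D_l\longrightarrow S^4 .
\]
I would establish this first; both parts of the lemma are then read off from its cohomology long exact sequence.

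For the cup products, observe that $\tilde H^*(D_l;\Z/3^r)$ is $\Z/3^r$ in degrees $3,4,7$ and zero otherwise, so the only product that could be non-trivial is $H^3(D_l)\otimes H^4(D_l)\to H^7(D_l)$. The cohomology long exact sequence of the cofibration (with arbitrary coefficients) gives that $j^*\colon \tilde H^7(D_l)\to \tilde H^7(C_{l\alpha_1})$ is an isomorphism, while $\tilde H^4(C_{l\alpha_1})=0$ because $C_{l\alpha_1}$ has no $4$-cell. Hence for $u\in H^3(D_l)$ and $v\in H^4(D_l)$ we get $j^*(u\cup v)=j^*u\cup j^*v=0$, and the injectivity of $j^*$ in degree $7$ forces $u\cup v=0$.

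For the $\mathcal{P}^1$ statement I would argue both directions. If $l=0$ then $D_0\simeq P^4(3^r)\vee S^7$, on which $\mathcal{P}^1$ vanishes for degree reasons (on $P^4(3^r)$ it lands in $H^7=0$). Conversely, suppose $l\neq 0$ in $\Z/3$. Then $\tilde H^*(D_l;\Z/3)$ is $\Z/3$ in degrees $3,4,7$, so the only possibly non-trivial $\mathcal{P}^1$ is $H^3(D_l;\Z/3)\to H^7(D_l;\Z/3)$; the cohomology exact sequence of $C_{l\alpha_1}\to D_l\to S^4$ with $\Z/3$ coefficients, together with $\tilde H^4(C_{l\alpha_1};\Z/3)=0$, shows $j^*$ is an isomorphism in degrees $3$ and $7$. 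By naturality of $\mathcal{P}^1$ it then suffices to prove $\mathcal{P}^1$ is non-trivial on $H^*(C_{l\alpha_1};\Z/3)$. Since $l$ is a unit modulo $3$ and $\alpha_1$ has order $3$ we have $l\alpha_1=\pm\alpha_1$, and the degree $-1$ self-map of $S^6$ induces $C_{-\alpha_1}\simeq C_{\alpha_1}$; thus $C_{l\alpha_1}\simeq C_{\alpha_1}$ in all cases.

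The only input that is not formal is the classical fact that $\mathcal{P}^1\colon H^3(C_{\alpha_1};\Z/3)\to H^7(C_{\alpha_1};\Z/3)$ is an isomorphism, i.e.\ that $\alpha_1\in\pi_6(S^3)$ is detected by the Steenrod power operation $\mathcal{P}^1$; I would cite Toda for this. Granting it, $\mathcal{P}^1$ is non-trivial on $C_{l\alpha_1}$, hence on $D_l$, whenever $l\neq 0$, and combining the two directions gives $l=0$ if and only if $\mathcal{P}^1$ acts trivially on $H^*(D_l;\Z/3)$. I expect the only real care needed to be the bookkeeping of the exact sequences; once the displayed cofibration sequence is in place there is no further homotopy-theoretic obstacle.
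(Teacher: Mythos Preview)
Your proof is correct. For the $\mathcal{P}^1$ statement it is essentially identical to the paper's argument: the paper's induced map $b:D'\to D_l$ (with $D'$ the mapping cone of $l\alpha_1$) is exactly your $j:C_{l\alpha_1}\to D_l$, and both of you use that $b^*=j^*$ is an isomorphism in degrees $3$ and $7$ together with the classical non-triviality of $\mathcal{P}^1$ on $C_{\alpha_1}$.

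For the vanishing of cup products the paper takes a different and shorter route: after localizing at $3$, both $\alpha_1$ and $\imath$ are co-H-maps, hence so is $l(\imath\circ\alpha_1)$, so $D_l$ is a co-H-space and all cup products in $\tilde H^*(D_l;\Z/3^r)$ vanish automatically. This avoids the exact-sequence bookkeeping entirely. Your approach, by contrast, is more hands-on but has the pleasant feature that the cofibration sequence $C_{l\alpha_1}\to D_l\to S^4$ you set up does double duty, furnishing both the cup-product argument and the map $j$ needed for the $\mathcal{P}^1$ part.
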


\begin{proof}
There is no loss of generality in localizing $D_l$ at $3$. In this case $\alpha_1$ and $\imath$ are co-H-maps, as is $l(\imath\circ\alpha_1)$. Therefore $D_l$ is a co-H-space and all cup products in $\tilde{H}^*(D_l;\Z/3^r)$ are trivial.

For the second statement consider the commutative diagram of cofibration sequences
\[
\xymatrix{
S^6\ar[r]^-{l\alpha_1}\ar@{=}[d]	&S^3\ar[r]\ar[d]^-{\imath}	&D'\ar[d]^-{b}\\
S^6\ar[r]^-{l(\imath\circ\alpha_1)}	&P^4(3^r)\ar[r]				&D_l
}
\]
where $D'$ is the mapping cone of $l\alpha_1$ and $b$ is an induced map. If $l\not\equiv0\pmod{3}$, then $H^*(D';\Z/3)$ has a non-trivial $\mathcal{P}^1$-action. Since $b^*:H^i(D_l;\Z/3)\to H^i(D';\Z/3)$ is isomorphic for $i=3$ and $7$, $\mathcal{P}^1$ acts non-trivially on $H^*(D_l;\Z/3)$.
\end{proof}

\section{The homotopy types of $\Sigma M$}

Let $M$ be a closed, oriented, simply connected 6-manifold with homology as in~(\ref{table_original M hmlgy}). By~\cite[Theorem 0]{jupp} (see also~\cite{wall1966}), $M$ decomposes as a connected sum $M\cong M'\#(\#^d_{i=1}S^3\times S^3)$, where $M'$ is closed, oriented and simply connected, has finite third homology group, and is unique up to oriented homeomorphism. It follows that the homology of $M'$ is
\begin{equation}\label{table_core M hmlgy}
\begin{tabular}{C{1.4cm}|C{1.3cm}|C{1.3cm}|C{1.3cm}|C{1.3cm}|C{1.3cm}|C{1.3cm}|C{1.3cm}}
$i$	&$0$	&$1$	&$2$	&$3$	&$4$	&$5$	&$6$\\
\hline
$H_i(M')$	&$\Z$	&$0$	&$\Z^b\oplus T$	&$T$	&$\Z^b$	&$0$	&$\Z$
\end{tabular}
\end{equation}
where $T=\bigoplus^c_{j=1}\Z/p^{r_j}_j$, with the $p_j$'s prime, is the torsion part of $H_2(M)$. The primes and their exponents may repeat, and we fix once and for all an ordering of them. In the sequel all spaces will be localized away from 2, and for this reason we may assume the $p_j$'s to be odd primes.

We work in the category $\text{CW}_*$ of pointed CW-complexes. All maps will be continuous and preserve basepoints. Fixing minimal CW structures on $M,M'$~\cite[Proposition 4H.3]{hatcher}, each will have a single 0-cell, which we will take as basepoint. Now denote by $M_5$ and $M'_5$ the 5-skeletons of $M,M'$ and let $f:S^5\to M_5$ and $f':S^5\to M'_5$ be the attaching maps of the 6-cells. We have
\begin{equation}\label{equation_M_5=M'_5 v S^3}
\begin{array}{c c c}
M_5\simeq M'_5\vee\bigvee^{2d}_{i=1}S^3
&\text{and}
&f\simeq f'+\omega,
\end{array}
\end{equation}
where $\omega:S^6\to\bigvee^{2d}_{i=1}S^3$ is the attaching map of the top cell in $\#^d_{i=1}S^3\times S^3$. Since $\omega$ is a sum of Whitehead products, $\Sigma\omega$ is null homotopic and we get 
\begin{equation}\label{equation_Sigma M = Sigma M'v S^4}
\textstyle\Sigma M\simeq\Sigma M'\vee\bigvee^{2d}_{i=1}S^4.
\end{equation}
Thus to understand the homotopy type of $\Sigma M$ we should study $\Sigma M'$.

In Section~\ref{section_gauge gp of M}, when working with gauge groups, we will need to construct a particular map $\jmath:Y\to M'_5$, where $Y$ is a 3-dimensional CW-complex. For this reason it will be useful to keep track of a certain subcomplex of $M'_5$ during the proof of Theorem~\ref{main thm_splitting of Sigma M}, and we now establish notation to do this.

%,  we do not only prove the homotopy equivalence of $\Sigma M'$ in Theorem~\ref{main thm_splitting of Sigma M} but also keep track of a subcomplex of $M'_5$ in the suspension splitting.}
 Let $X=\bigvee^b_{i=1}\textbf{S}^2_i\vee\bigvee^c_{j=1}\textbf{P}^3(p^{r_j}_j)$ where $\textbf{S}^2_i=S^2$ and $\textbf{P}^3(p^{r_j}_j)=P^3(p^{r_j}_j)$.
By~\cite[Proposition 4H.3]{hatcher} $M'_5$ can be obtained by two cofibration sequences
\begin{equation}\label{exact seq_M'5 construction}
\begin{array}{c c c}
\bigvee^c_{j=1}P^3(p^{r_j}_j)\overset{g}{\to}X\to C_g
&\text{and}
&\bigvee^b_{i=1}S^3\overset{g'}{\to}C_g\to M'_5
\end{array}
\end{equation}
where $g$ and $g'$ are maps inducing trivial morphisms in homology, and $C_g$ is the mapping cone of $g$.
%$g_*:H_2(\bigvee^c_{j=1}P^3(p^{r_j}_j))\to H_2(X)$ and $g'_*:H_3(\bigvee^b_{i=1}S^3)\to H_3(C_g)$
\begin{lemma}\label{lemma_hmtpy type of Sigma M_5}
Let $M'$ be a closed, simply connected 6-manifold with homology as in~(\ref{table_core M hmlgy}), and let $M'_5$ be its 5-skeleton. Localized away from 2, there is a homotopy equivalence
\[
\textstyle
\Sigma M'_5\simeq_{(\frac{1}{2})}\Sigma X\vee\bigvee^{b}_{i=1}S^5\vee\bigvee^c_{j=1}P^5(p^{r_j}_j).
\]
\end{lemma}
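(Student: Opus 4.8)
## Proof Proposal

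The plan is to suspend the two cofibration sequences in~(\ref{exact seq_M'5 construction}) and argue that both become trivial (i.e., split) after suspension and localization away from $2$. Recall that $X=\bigvee^b_{i=1}S^2\vee\bigvee^c_{j=1}P^3(p^{r_j}_j)$, so that $\Sigma X\simeq\bigvee^b_{i=1}S^3\vee\bigvee^c_{j=1}P^4(p^{r_j}_j)$. First I would handle the second cofibration sequence $\bigvee^b_{i=1}S^3\xrightarrow{g'}C_g\to M'_5$. Suspending gives $\bigvee^b_{i=1}S^4\xrightarrow{\Sigma g'}\Sigma C_g\to\Sigma M'_5$. Since $g'$ induces the trivial map in homology, so does $\Sigma g'$, and hence (as $\Sigma C_g$ is simply connected) the map $\Sigma g'$ restricted to each sphere $S^4$ is trivial in $H_4$. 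I would like to invoke Lemma~\ref{lemma_nullity hurewicz}, but $\Sigma C_g$ is not $3$-connected; instead I would note that the first cofibration already tells us $\Sigma C_g\simeq\Sigma X\vee(\text{something})$ — so it suffices to first dispatch the first cofibration sequence and then deal with the second one inside the resulting wedge.

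So the real work is the first sequence $\bigvee^c_{j=1}P^3(p^{r_j}_j)\xrightarrow{g}X\to C_g$. Suspending, we get $\bigvee^c_{j=1}P^4(p^{r_j}_j)\xrightarrow{\Sigma g}\Sigma X\to\Sigma C_g$, and I claim $\Sigma g$ is null homotopic, which immediately yields $\Sigma C_g\simeq\Sigma X\vee\bigvee^c_{j=1}P^5(p^{r_j}_j)$. To see $\Sigma g\simeq *$, I would analyze it one wedge summand at a time: for each $j$ it suffices to show the composite $P^4(p^{r_j}_j)\xrightarrow{\Sigma g}\Sigma X=\bigvee^b_{i=1}S^3\vee\bigvee^c_{l=1}P^4(p^{r_l}_l)$ is trivial. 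But this is exactly the situation of Lemma~\ref{lemma_P^4-> P^4 v S^3}: a map from $P^4(p^{r_j}_j)$ into a wedge of $S^3$'s and $P^4$'s that is trivial on $H_3$ (which holds since $g$, hence $\Sigma g$, is trivial in homology) must be null homotopic. Hence $\Sigma g\simeq *$ and $\Sigma C_g\simeq\bigvee^b_{i=1}S^3\vee\bigvee^c_{j=1}(P^4(p^{r_j}_j)\vee P^5(p^{r_j}_j))=\Sigma X\vee\bigvee^c_{j=1}P^5(p^{r_j}_j)$.

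Now I return to the second sequence with this description in hand: $\Sigma C_g$ is a wedge of $S^3$'s, $P^4$'s, and $P^5$'s, and we have $\bigvee^b_{i=1}S^4\xrightarrow{\Sigma g'}\Sigma C_g\to\Sigma M'_5$. By the Hilton--Milnor Theorem together with Lemma~\ref{lemma_Moore sp hmtpy gps} (which gives $\pi_4(P^4(p^r))=\pi_4(P^5(p^r))=0$ and $[S^4,S^3]$ contributes nothing after localizing away from $2$ since $\pi_4(S^3)\cong\Z/2$), the group $[\,\bigvee^b_{i=1}S^4,\,\Sigma C_g\,]$ vanishes after localization away from $2$ — more precisely, each $\pi_4$ of a wedge summand is a $2$-group or zero, and the Hilton--Milnor iterated terms only raise connectivity. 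Hence $\Sigma g'\simeq *$ away from $2$, giving $\Sigma M'_5\simeq_{(\frac12)}\Sigma C_g\vee\bigvee^b_{i=1}S^5\simeq_{(\frac12)}\Sigma X\vee\bigvee^b_{i=1}S^5\vee\bigvee^c_{j=1}P^5(p^{r_j}_j)$, as desired.

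The main obstacle I anticipate is the bookkeeping in the first step: verifying that the hypothesis ``$g$ is trivial in homology'' (from~\cite[Proposition 4H.3]{hatcher}) genuinely translates into the $H_3$-triviality hypothesis of Lemma~\ref{lemma_P^4-> P^4 v S^3} after picking off each summand via the pinch maps, and making sure no subtlety arises from the fact that the target wedge mixes spheres and Moore spaces of possibly equal primes. One must also be slightly careful that the Hilton--Milnor decomposition of $[\bigvee S^4,\Sigma C_g]$ in the last step has no surviving $2$-torsion-free contributions — this is where localizing away from $2$ is essential, since $\pi_4(S^3)$ and similar groups are killed, but it is worth stating explicitly rather than hand-waving.
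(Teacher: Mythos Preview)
Your overall strategy matches the paper's proof exactly: suspend the two cofibration sequences~(\ref{exact seq_M'5 construction}), kill $\Sigma g$ using Lemma~\ref{lemma_P^4-> P^4 v S^3}, and then kill $\Sigma g'$ using Hilton--Milnor together with the triviality of $g'$ in homology. The first half of your argument is correct and identical to the paper's.

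There is, however, a genuine error in the second half. You claim that Lemma~\ref{lemma_Moore sp hmtpy gps} gives $\pi_4(P^5(p^r))=0$, and hence that the whole group $[\bigvee^b_{i=1}S^4,\Sigma C_g]$ vanishes away from~$2$. This is false: $P^5(p^r)$ is $3$-connected with $H_4(P^5(p^r))\cong\Z/p^r$, so Hurewicz gives $\pi_4(P^5(p^r))\cong\Z/p^r$, which is nonzero odd torsion. Lemma~\ref{lemma_Moore sp hmtpy gps} only tells you $\pi_n(P^n(p^r))=\pi_n(P^{n-1}(p^r))=0$; it says nothing about $\pi_n(P^{n+1}(p^r))$. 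Consequently your vanishing claim for $[\bigvee S^4,\Sigma C_g]$ fails, and the argument as written does not conclude.

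The fix is exactly what the paper does and what you already flagged in your first paragraph: after observing that $\pi_4(S^3)$ and $\pi_4(P^4(p^{r_j}_j))$ vanish away from~$2$, deduce that $\Sigma g'$ factors through the wedge $\bigvee^c_{j=1}P^5(p^{r_j}_j)$. Then for each component
\[
g'_{ij}:S^4\hookrightarrow\textstyle\bigvee^b_{i=1}S^4\xrightarrow{\Sigma g'}\Sigma C_g\xrightarrow{\mathrm{pinch}}P^5(p^{r_j}_j),
\]
use the fact that $g'$ (hence $\Sigma g'$, hence each $g'_{ij}$) is trivial on $H_4$, together with Lemma~\ref{lemma_nullity hurewicz} applied to the $3$-connected space $P^5(p^{r_j}_j)$, to conclude $g'_{ij}\simeq *$. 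So the homological hypothesis on $g'$ is not optional bookkeeping---it is precisely what kills the surviving $\Z/p^r$ in $\pi_4$.
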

\begin{proof}
Suspending~(\ref{exact seq_M'5 construction}) we get cofibration sequences
\[
\begin{array}{c c c}
\bigvee^c_{j=1}P^4(p^{r_j}_j)\overset{\Sigma g}{\to}\Sigma X\to\Sigma C_g
&\text{and}
&\bigvee^b_{i=1}S^4\overset{\Sigma g'}{\to}\Sigma C_g\to \Sigma M'_5.
\end{array}
\]
The restriction of $\Sigma g$ to each wedge summand $P^4(p^{r_k}_k)$ induces a trivial homomorphism in homology, so is null homotopic by Lemma~\ref{lemma_P^4-> P^4 v S^3}. Thus $\Sigma g$ itself must be null homotopic, and hence $\Sigma C_g\simeq\Sigma X\vee\bigvee^c_{j=1}P^5(p^{r_j}_j).$ Thus $\Sigma g'$ becomes $\Sigma g':\bigvee^b_{i=1}S^4\to\Sigma X\vee\bigvee^c_{j=1}P^5(p^{r_j}_j)$ while
\[
\textstyle
\pi_4(\Sigma X\vee\bigvee^c_{j=1}P^5(p^{r_j}_j))\cong\bigoplus^b_{i=1}\pi_4(\Sigma\textbf{S}^2_i)\oplus\bigoplus^c_{j=1}\left(\pi_4(\Sigma\textbf{P}^3(p^{r_j}_j))\oplus\pi_4(P^5(p^{r_j}_j))\right)
\]
by the Hilton-Milnor Theorem.

Localized away from 2, $\pi_4(\Sigma\textbf{S}^2_i)$ and $\pi_4(\Sigma\textbf{P}^3(p^{r_j}_j))$ are trivial by Lemma~\ref{lemma_Moore sp hmtpy gps}, and thus $\Sigma g'$ factors through $\bigvee^c_{j=1}P^5(p^{r_j}_j)$. For $1\leq i\leq b$ and $1\leq j\leq c$ let $g'_{ij}$ be the composite
\[
\textstyle
g'_{ij}:S^4_i\hookrightarrow\bigvee^b_{i=1}S^4_i\overset{\Sigma g'}{\longrightarrow}\Sigma C_g\overset{\text{pinch}}{\longrightarrow}P^5(p^{r_j}_j).
\]
Since $(g'_{ij})_*:H_4(S^4)\to H_4(P^5(p^{r_j}_j))$ is the zero map, Lemma~\ref{lemma_nullity hurewicz} implies that each $g'_{ij}$ is null homotopic. Therefore $\Sigma g'\simeq*$ and we obtain the asserted homotopy equivalence.
\end{proof}

The Hilton-Milnor Theorem implies that there is an isomorphism
\[
\textstyle
\pi_6(\Sigma M'_5)\cong\bigoplus^{b}_{i=1}(\pi_6(\Sigma\textbf{S}^2_i)\oplus\pi_6(S^5))\oplus\bigoplus^c_{j=1}(\pi_6(\Sigma\textbf{P}^3(p^{r_j}_j))\oplus\pi_6(P^5(p^{r_j}_j)))\oplus W,
\]
where $W$ consists of summands generated by Whitehead products;
\begin{eqnarray*}
W
&\cong&\textstyle\bigoplus_{1\leq i<j\leq b}\pi_6(\Sigma\textbf{S}^2_i\wedge\textbf{S}^2_j)\oplus\bigoplus^b_{i=1}\bigoplus^c_{j=1}(\pi_6(\Sigma\textbf{S}^2_i\wedge\textbf{P}^3(p^{r_j}_j))\oplus\pi_6(\Sigma\textbf{S}^2_i\wedge P^4(p^{r_j}_j)))\\[8pt]
&&\oplus\textstyle\bigoplus_{1\leq j<k\leq c}\pi_6(\Sigma\textbf{P}^3(p^{r_j}_j)\wedge\textbf{P}^3(p^{r_k}_k))\oplus\bigoplus^c_{j,k=1}\pi_6(\Sigma\textbf{P}^3(p^{r_j}_j)\wedge P^4(p^{r_k}_k)).
\end{eqnarray*}
Localized away from 2, Lemma~\ref{lemma_Moore sp hmtpy gps} implies that
\begin{eqnarray}\label{equation_pi_6 Sigma M_5}\nonumber
\pi_6(\Sigma M'_5)
&\cong_{(\frac{1}{2})}&\textstyle\bigoplus^b_{i=1}\pi_6(\Sigma\textbf{S}^2_i)\oplus\bigoplus^c_{j=1}\pi_6(\Sigma\textbf{P}^3(p^{r_j}_j))\oplus\bigoplus^b_{i=1}\bigoplus^c_{j=1}\pi_6(\Sigma\textbf{S}^2_i\wedge P^4(p^{r_j}_j))\oplus\\
&&\textstyle\bigoplus_{1\leq j<k\leq c}\pi_6(\Sigma\textbf{P}^3(p^{r_j}_j)\wedge\textbf{P}^3(p^{r_k}_k))\oplus\bigoplus^c_{j,k=1}\pi_6(\Sigma\textbf{P}^3(p^{r_j}_j)\wedge P^4(p^{r_k}_k)).
\end{eqnarray}
Let $\tilde{f}:S^6\to\Sigma M'_5$ be the attaching map of the 7-cell in $\Sigma M'$. Then
\begin{equation}\label{equation_tilde f Hilton Milnor decomp}
\textstyle
\tilde{f}\simeq_{(\frac{1}{2})}\sum^b_{i=1}x_i+\sum^c_{j=1}y_j+\sum^b_{i=1}\sum^c_{j=1}u_{ij}+\sum_{1\leq j<k\leq c}v_{jk}+\sum^c_{j,k=1}w_{jk},
\end{equation}
where $x_i,y_j,u_{ij},v_{jk}$ and $w_{jk}$ are composites
\[
\begin{array}{l}
x_i:S^6\overset{x'_i}{\to}\Sigma\textbf{S}^2_i\hookrightarrow\Sigma M_5'\\
y_j:S^6\overset{y'_j}{\to}\Sigma\textbf{P}^3(p^{r_j}_j)\hookrightarrow\Sigma M_5'\\
u_{ij}:S^6\overset{u'_{ij}}{\to}\Sigma\textbf{S}^2_i\vee P^5(p^{r_j}_j)\hookrightarrow\Sigma M_5'\\
v_{jk}:S^6\overset{v'_{jk}}{\to}\Sigma\textbf{P}^3(p^{r_j}_j)\vee\Sigma\textbf{P}^3(p^{r_k}_k)\hookrightarrow\Sigma M_5'\\
w_{jk}:S^6\overset{w'_{jk}}{\to}\Sigma\textbf{P}^3(p^{r_j}_j)\vee P^5(p^{r_k}_k)\hookrightarrow\Sigma M_5'
\end{array}
\]
for some maps $x'_i,y'_j,u'_{ij},v'_{jk}$ and $w'_{jk}$. They represent the components of $\tilde{f}$ on the right hand side of~(\ref{equation_pi_6 Sigma M_5}). Now we can apply the lemmas in Section 2 to check whether they are null homotopic or not.

\begin{lemma}\label{lemma_map cone of component no cup prod}
Given a map $h:S^6\to\Sigma M'_5$, let $C_h$ be its mapping cone. If (1) $h=x_i$, (2)~$h=y_j$, (3) $h=u_{ij}$, (4) $h=v_{jk}$, or (5) $h=w_{jk}$, then all cup products in $\tilde{H}^*(C_{h};R)$ are trivial for any principal ideal domain $R$.
\end{lemma}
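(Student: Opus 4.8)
The plan is to treat each of the five cases uniformly by observing that, in every case, $h$ is the composite of a map into a \emph{wedge of at most two factors} followed by the inclusion $\iota$ of that wedge into $\Sigma M'_5$. Since the wedge summands appearing in the Hilton--Milnor decomposition~(\ref{equation_pi_6 Sigma M_5}) are retracts of $\Sigma M'_5$, the inclusion $\iota$ of the relevant sub-wedge splits off, so the mapping cone $C_h$ sits in $\Sigma M'_5$ with a retraction $C_h \to C_{h'}$ (where $h' \colon S^6 \to W_h$ is the corestriction of $h$ to the sub-wedge $W_h$) that is an isomorphism on the cohomology of the cells meeting the image of $h$; all the remaining cells of $C_h$ come from the complementary wedge summands of $\Sigma M'_5$ and contribute only through the suspension structure. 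More precisely, I would first argue that $C_h \simeq C_{h'} \vee (\text{complement})$, reducing the claim to showing that all cup products in $\tilde H^*(C_{h'};R)$ vanish. This is the organizing step; after it, each case is a small computation.

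Next, for each of the five cases I would identify $C_{h'}$ and invoke the appropriate earlier lemma. In case (1), $h' = x'_i \colon S^6 \to \Sigma \mathbf{S}^2_i = S^3$; but $\pi_6(S^3) \cong_{(\frac12)} 0$ away from $2$, so $h' \simeq *$ and $C_{h'} = S^3 \vee S^7$ has trivial cup products. In case (2), $h' = y'_j \colon S^6 \to \Sigma \mathbf{P}^3(p^{r_j}_j) = P^4(p^{r_j}_j)$, and I would split $\pi_6(P^4(p^{r_j}_j))$ via Lemma~\ref{lemma_pi_6(P^4)} into the $[\one,\one]\circ\phi$-summand and (when $p_j = 3$) the $\imath\circ\alpha_1$-summand, then apply Lemma~\ref{lemma_mapping cone of b} and Lemma~\ref{lemma_mapping cone of a} together with Lemma~\ref{lemma_no cup prod} (additivity of the cup-product-triviality condition over sums of maps $S^n \to X$): Lemma~\ref{lemma_mapping cone of a} tells us all cup products vanish on the cone of the $\imath\circ\alpha_1$-part for \emph{any} coefficient, and Lemma~\ref{lemma_mapping cone of b} does the same for the $[\one,\one]\circ\phi$-part is not quite right — rather I would note that $\tilde H^*$ of $P^4(p^r)$ is concentrated in degrees $3,4$, so any cup product landing in $H^7$ is of the form (degree $3$)$\cup$(degree $4$); whether this is nonzero is exactly the question controlled by Lemma~\ref{lemma_mapping cone of b}. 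Since we are only asked to show cup products are \emph{trivial}, not to detect $h'$, and Lemma~\ref{lemma_mapping cone of a} already gives triviality unconditionally on one summand, the only genuine content is the $[\one,\one]\circ\phi$-component. Hmm — this is the subtle point (see below).

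For cases (3), (4), (5) the target sub-wedge is one of the forms $\Sigma \mathbf{S}^2_i \vee P^5(p^{r_j}_j) = S^3 \vee P^5(p^{r_j}_j)$, $P^4(p^{r_j}_j) \vee P^4(p^{r_k}_k)$, or $P^4(p^{r_j}_j) \vee P^5(p^{r_k}_k)$. Here I would appeal directly to Lemma~\ref{lemma_nullity cup prod}: in each case $h'$ factors (after discarding null-homotopic Hilton--Milnor components via Lemma~\ref{lemma_Moore sp hmtpy gps}) through a Whitehead product $[\iota_1,\iota_2]\colon \Sigma P^a \wedge P^b \to P^{a+1} \vee P^{b+1}$, precisely matching the setup $\hat f$ of Lemma~\ref{lemma_nullity cup prod} (with one of $r,s$ possibly $\infty$, covering the $S^3$-factor). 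That lemma's equivalence $(1)\Leftrightarrow(3)$ is stated for a fixed $f$, and what I actually need is the implication ``for our specific $h'$, cup products in $\tilde H^*(C_{h'})$ are trivial'' — which holds because $h'$ is (a possibly nontrivial multiple of) the generator and the generator's cone is handled by the lemma... but again this only gives triviality when the cup product genuinely vanishes, which the lemma says is equivalent to $h'$ being null. So this is the main obstacle: \emph{Lemma~\ref{lemma_nullity cup prod} and Lemma~\ref{lemma_mapping cone of b} say cup products are trivial if and only if the map is null, yet the point of the present lemma is to handle possibly nonzero maps.} I expect the resolution is that in cases (3)--(5) and for the $[\one,\one]\circ\phi$-part of case~(2), one observes that the cup product in $\tilde H^*(C_{h'};R)$ that could be nonzero lands in $H^7$, but the \emph{ring} structure there is forced to be trivial by a dimension/degree count once $R$ is an arbitrary PID — indeed the relevant product of two torsion classes of orders $p^{r_j},p^{r_k}$ lands in a group that, after the Künneth/UCT bookkeeping already done in the proof of Lemma~\ref{lemma_mapping cone of b}, can only be nonzero when it detects the map, whereas here the map $\tilde f$'s restriction is \emph{not} being claimed null. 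I would therefore reexamine: likely the intended argument is simply that $C_h$, being a subquotient construction inside the \emph{suspension} $\Sigma M'$, inherits enough co-$H$-structure — or rather that these particular components, unlike $\tilde f$ itself, have cones which are co-$H$-spaces — making all cup products vanish trivially; this co-$H$ argument (as used in Lemma~\ref{lemma_mapping cone of a}) is the clean route, and identifying for which of the five components it applies, versus where one must fall back on Lemmas~\ref{lemma_nullity cup prod}/\ref{lemma_mapping cone of b} via additivity (Lemma~\ref{lemma_no cup prod}), is the crux I would work out carefully.
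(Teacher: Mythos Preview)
Your proposal has a genuine gap, and you correctly sense it yourself when you write that Lemmas~\ref{lemma_nullity cup prod} and~\ref{lemma_mapping cone of b} are \emph{equivalences}. The problem is that you are trying to prove cup-product triviality by analysing each component map $h'$ in isolation. But the whole point of the lemma is that, at this stage, we do \emph{not} know whether $y'_j$, $u'_{ij}$, $v'_{jk}$, $w'_{jk}$ are null; indeed Lemma~\ref{lemma_mapping cone of b} says the cone of $k([\one,\one]\circ\phi)$ has nontrivial cup products precisely when $k\neq0$, so your approach cannot succeed for case~(2) without already knowing the conclusion of the \emph{later} Lemma~\ref{lemma_attaching map no b maps}. (Incidentally, your claim in case~(1) that $\pi_6(S^3)\cong_{(\frac12)}0$ is false: away from~$2$ it is $\Z/3$, generated by $\alpha_1$.)

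The missing idea, which the paper uses, is to bring in the one piece of external information available: the mapping cone of the \emph{full} attaching map $\tilde f$ is $\Sigma M'$, a suspension, so all cup products in $\tilde H^*(\Sigma M';R)$ vanish. One then pinches $\Sigma M'_5$ to the relevant sub-wedge; the composite $S^6\xrightarrow{\tilde f}\Sigma M'_5\xrightarrow{\text{pinch}}W$ has a cone whose cup products still vanish (this is \cite[Lemma~4.2]{ST19}). For $W=\Sigma\mathbf{S}^2_i$ or $W=\Sigma\mathbf{P}^3(p_j^{r_j})$ this handles cases~(1) and~(2) directly, after noting $C_h\simeq C_{h'}\vee(\text{complement})$ as you did. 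For $W$ a two-factor wedge, the pinched map is a \emph{sum} such as $y_j+y_k+v_{jk}$; now Lemma~\ref{lemma_no cup prod} is used \emph{subtractively} --- knowing cup products vanish on the cones of $y_j$, $y_k$, and the sum, one deduces they vanish on the cone of $v_{jk}$. Thus the logical flow is: suspension structure $\Rightarrow$ cup products vanish $\Rightarrow$ (in later lemmas) components are null; you had it reversed.
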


\begin{proof}
For $h=x_i$, let $h'$ be the composite $h': S^6\overset{\tilde{f}}{\to}\Sigma M_5'\overset{\text{pinch}}{\to}{\Sigma\textbf{S}^2_i}$ and $C_{h'}$ its mapping cone. Since the mapping cone of $\tilde{f}$ is $\Sigma M'$ and the induced morphism $\tilde{f}^*$ in cohomology is the zero map for dimensional reasons, \cite[Lemma 4.2]{ST19} implies that all cup products in $\tilde{H}^*(C_{h'};R)$ are trivial. Notice that $C_h\simeq C_{h'}\vee\bigvee_{\substack{1\leq k\leq b \\ k\neq i}}\Sigma\textbf{S}^2_k\vee\bigvee^{b}_{k=1}S^5\vee\bigvee^c_{j=1}(\Sigma\textbf{P}^3(p^{r_j}_j)\vee P^5(p^{r_j}_j))$, so $\tilde{H}^*(C_h;R)$ has trivial cup products. The $h=y_j$ case can be shown similarly.

For $h=v_{jk}$, let $h''$ be the composite $h'': S^6\overset{f}{\longrightarrow}\Sigma M'_5\overset{\text{pinch}}{\longrightarrow}{\Sigma\textbf{P}^3(p^{r_j}_j)\vee\Sigma\textbf{P}^3(p^{r_k}_k)}\hookrightarrow\Sigma M'_5$ and let $C_{h''}$ be its mapping cone. A similar argument as above shows that all cup products in $\tilde{H}^*(C_{h''};R)$ are trivial. As $h''\simeq_{(\frac{1}{2})}y_j+y_k+v_{jk}$, and we have shown that all cup products in the reduced cohomology ring of the mapping cones of $y_j$ and $y_k$ are trivial, Lemma~\ref{lemma_no cup prod} implies that all cup products in $\tilde{H}^*(C_{h};R)$ are trivial. The case where $h=u_{ij}$ or $w_{jk}$ can be shown similarly.
\end{proof}

\begin{lemma}\label{lemma_attaching map no wh prod}
The components $u_{ij},v_{jk}$ and $w_{jk}$ are null homotopic.
\end{lemma}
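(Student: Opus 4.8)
The plan is to show that each of the ``mixed'' Whitehead product components $u_{ij}$, $v_{jk}$, $w_{jk}$ lies in a homotopy group of the form $\pi_6$ of a suspension of a smash of two spaces, and then to apply the machinery of Section 2 — principally Lemma~\ref{lemma_nullity cup prod} together with Lemma~\ref{lemma_map cone of component no cup prod}. Concretely, $u'_{ij}\in\pi_6(\Sigma\textbf{S}^2_i\vee P^5(p^{r_j}_j))$, and the piece of this group contributing to $W$ (after projecting away the summands $\pi_6(\Sigma\textbf{S}^2_i)$ and $\pi_6(P^5(p^{r_j}_j))$, which have already been split off) is generated by Whitehead products and hence factors through the Whitehead product map $[\imath_1,\imath_2]\colon \Sigma\textbf{S}^2_i\wedge P^4(p^{r_j}_j)\to \Sigma\textbf{S}^2_i\vee P^5(p^{r_j}_j)$; that is, $u_{ij}$ is (up to homotopy) the map $\hat f$ appearing in Lemma~\ref{lemma_nullity cup prod} for a suitable $f\colon S^6\to\Sigma\textbf{S}^2_i\wedge P^4(p^{r_j}_j)$. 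The same analysis applies to $v_{jk}$ with $f\colon S^6\to\Sigma\textbf{P}^3(p^{r_j}_j)\wedge\textbf{P}^3(p^{r_k}_k)$ and to $w_{jk}$ with $f\colon S^6\to\Sigma\textbf{P}^3(p^{r_j}_j)\wedge P^4(p^{r_k}_k)$. In each case the dimensions match the hypotheses of Lemma~\ref{lemma_nullity cup prod}: writing $S^2=P^3(p^\infty)$, we have $n,l\in\{3,4\}$ with $n=3$ (or using the convention for $S^2$), $m=6=n+l-1$ in the cases involving an $S^2$ factor, and $m=6=n+l$ in the case of two $P^3$-factors, which is exactly the allowed range.

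Granting that reduction, the argument is short. By the equivalence $(1)\Leftrightarrow(3)$ of Lemma~\ref{lemma_nullity cup prod}, the component $u_{ij}$ (respectively $v_{jk}$, $w_{jk}$) is null homotopic if and only if all cup products in the reduced mod-$p^t$ cohomology of its mapping cone $C_{u_{ij}}$ (resp. $C_{v_{jk}}$, $C_{w_{jk}}$) vanish. But Lemma~\ref{lemma_map cone of component no cup prod} tells us precisely that all cup products in $\tilde H^*(C_h;R)$ are trivial for any PID $R$ when $h$ is any of $u_{ij},v_{jk},w_{jk}$; taking $R=\Z/p^t$ gives the needed vanishing. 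Hence each of these components is null homotopic.

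The step I expect to be the main obstacle is the first one: justifying cleanly that $u'_{ij}$, $v'_{jk}$, $w'_{jk}$ actually factor through the relevant Whitehead product $[\imath_1,\imath_2]$, i.e. that the ``cross-term'' summand of $\pi_6$ of a two-fold wedge is generated (after suspension-away-from-2) by that single Whitehead product and nothing else. This requires unwinding the Hilton--Milnor decomposition used to define these components, checking that the only surviving basic product of weight $\geq 2$ in the relevant degree (after localizing away from $2$ and using Lemma~\ref{lemma_Moore sp hmtpy gps} to kill the unwanted terms, as was already done in deriving~(\ref{equation_pi_6 Sigma M_5})) is exactly the one coming from $[\imath_1,\imath_2]$, and that the possible higher-weight brackets land in groups that Lemma~\ref{lemma_Moore sp hmtpy gps} shows to be trivial. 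Once this bookkeeping is in place, identifying $u_{ij}$ etc. with the map $\hat f$ of Lemma~\ref{lemma_nullity cup prod} and invoking Lemma~\ref{lemma_map cone of component no cup prod} is routine. A secondary point to be careful about is the role of $t=\min\{r_j,r_k\}$ (or $t=r_j$ when one factor is an $S^2=P^3(p^\infty)$) in the coefficient group $\Z/p^t$; since Lemma~\ref{lemma_map cone of component no cup prod} gives vanishing cup products over \emph{every} PID, this causes no trouble, but the matching of primes (the Whitehead products between Moore spaces for distinct primes $p_j\neq p_k$ are already null for elementary reasons, so only the equal-prime case needs Lemma~\ref{lemma_nullity cup prod}) should be noted.
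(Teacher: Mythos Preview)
Your proposal is correct and follows essentially the same route as the paper: factor each component through the Whitehead product $[\imath_1,\imath_2]$, invoke Lemma~\ref{lemma_map cone of component no cup prod} to see that the relevant mapping cone has trivial cup products, and then apply Lemma~\ref{lemma_nullity cup prod} to conclude that the underlying map $f$ (and hence $\hat f$) is null. The paper's proof is terser---it simply asserts the factorisation through $[\imath_1,\imath_2]$ and applies the two lemmas---whereas you spell out the Hilton--Milnor bookkeeping, the dimension check against the hypotheses of Lemma~\ref{lemma_nullity cup prod}, and the distinct-primes observation; these are useful clarifications but not a different argument.
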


\begin{proof}
We show that $u_{ij}$ is null homotopic. It is the composite
\[
S^6\overset{u'}{\longrightarrow}{\Sigma\textbf{S}^2_i}\wedge P^4(p^{r_j}_j)\overset{[\imath_1,\imath_2]}{\longrightarrow}{\Sigma\textbf{S}^2_i}\vee P^5(p^{r_j}_j)
\]
for some map $u'$. Let $C$ be the mapping cone of $u_{ij}$. Lemma~\ref{lemma_map cone of component no cup prod} implies that all cup products in $\tilde{H}^*(C;\Z/p^{r_j}_j)$ are trivial. By Lemma~\ref{lemma_nullity cup prod} $u'$ is null homotopic. It follows that $u_{ij}$ is null homotopic. The triviality of $v_{jk}$ and $w_{jk}$ is established similarly.
\end{proof}

\begin{lemma}\label{lemma_attaching map no b maps}
If $p_j>3$, then $y_j$ is null homotopic. If $p_j=3$, then $y_j$ is the composite
\[
S^6\overset{d\alpha_1}{\longrightarrow}S^3\overset{\imath_j}{\longrightarrow}{\Sigma\textbf{P}^3(p^{r_j}_j)}\hookrightarrow\Sigma M'_5
\]
where $d\in\Z/3$, $\alpha_1:S^6\to S^3$ is the order 3 stable map, and $\imath_j:S^3\to{\Sigma\textbf{P}^3(p^{r_j}_j)}$ is the inclusion of the bottom cell.
\end{lemma}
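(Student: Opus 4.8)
The plan is to reduce everything to the analysis of $\pi_6(P^4(p^r))$ carried out in Lemmas~\ref{lemma_pi_6(P^4)}, \ref{lemma_mapping cone of b} and~\ref{lemma_mapping cone of a}. By Lemma~\ref{lemma_hmtpy type of Sigma M_5} the space $\Sigma\textbf{P}^3(p^{r_j}_j)=P^4(p^{r_j}_j)$ is a wedge summand of $\Sigma M'_5$, so $y_j$ factors as $S^6\xrightarrow{y'_j}\Sigma\textbf{P}^3(p^{r_j}_j)\hookrightarrow\Sigma M'_5$ with $y'_j\in\pi_6(P^4(p^{r_j}_j))$, and it suffices to identify $y'_j$. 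By Lemma~\ref{lemma_pi_6(P^4)} one can write $y'_j=k([\one,\one]\circ\phi)+l(\imath\circ\alpha_1)$ with $k\in\Z/p^{r_j}_j$ and $l\in\Z/3$, where the term $l(\imath\circ\alpha_1)$ is only present when $p_j=3$. The assertion is precisely that $k=0$ in all cases, and that $l$ is the integer called $d$ when $p_j=3$.

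First I would transport the cohomological information from $\Sigma M'$ down to the mapping cone of the single component. Let $C_{y_j}$ be the mapping cone of $y_j\colon S^6\to\Sigma M'_5$ and $C_{y'_j}$ that of $y'_j$. Since $y_j$ factors through the wedge summand $\Sigma\textbf{P}^3(p^{r_j}_j)$, we have $C_{y_j}\simeq C_{y'_j}\vee W$ for the complementary wedge summand $W$ of $\Sigma M'_5$; in particular $C_{y'_j}$ is a retract of $C_{y_j}$. Lemma~\ref{lemma_map cone of component no cup prod}(2) says that all cup products in $\tilde H^*(C_{y_j};\Z/p^{r_j}_j)$ vanish, and since a retract of a ring with trivial cup products again has trivial cup products, all cup products in $\tilde H^*(C_{y'_j};\Z/p^{r_j}_j)$ vanish as well.

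When $p_j>3$ this finishes the argument at once: $y'_j=k([\one,\one]\circ\phi)$, and Lemma~\ref{lemma_mapping cone of b}, which detects $k$ via cup products in the $\Z/p^{r_j}_j$-cohomology of its mapping cone, forces $k=0$, so $y_j$ is null homotopic. When $p_j=3$ the two summands of $\pi_6(P^4(3^{r_j}))$ must be disentangled, and this is the only step requiring care. By Lemma~\ref{lemma_mapping cone of a} all cup products in $\tilde H^*(D_l;\Z/3^{r_j})$ are trivial, where $D_l$ is the mapping cone of $l(\imath\circ\alpha_1)$. Since $\dim P^4(3^{r_j})=4<6$, I would invoke Lemma~\ref{lemma_no cup prod}(2) with $R=\Z/3^{r_j}$, $f=l(\imath\circ\alpha_1)$ and $g=k([\one,\one]\circ\phi)$: triviality of cup products in $C_f=D_l$ and in $C_{f+g}=C_{y'_j}$ forces triviality in the mapping cone of $g=k([\one,\one]\circ\phi)$, whence $k=0$ by Lemma~\ref{lemma_mapping cone of b}. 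Therefore $y'_j=l(\imath\circ\alpha_1)=\imath_j\circ(l\alpha_1)$, and putting $d=l$ presents $y_j$ as the stated composite $S^6\xrightarrow{d\alpha_1}S^3\xrightarrow{\imath_j}\Sigma\textbf{P}^3(p^{r_j}_j)\hookrightarrow\Sigma M'_5$.

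The hard part is not conceptual but a matter of bookkeeping: one has to keep the coefficient ring equal to $\Z/p^{r_j}_j$ throughout, so that Lemma~\ref{lemma_mapping cone of b} genuinely sees the order of $k$ rather than merely its reduction mod $p_j$, and one has to check that the wedge-summand retraction $C_{y_j}\to C_{y'_j}$ really does split off the mapping cone of the component together with its ring structure. Both are immediate from the naturality of pinch maps and the fact that $\Sigma\textbf{P}^3(p^{r_j}_j)$ is a retract of $\Sigma M'_5$.
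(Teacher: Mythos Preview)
Your proof is correct and follows essentially the same route as the paper's: factor $y_j$ through $y'_j\in\pi_6(P^4(p^{r_j}_j))$, use Lemma~\ref{lemma_map cone of component no cup prod} and the wedge splitting $C_{y_j}\simeq C_{y'_j}\vee W$ to kill cup products in $\tilde H^*(C_{y'_j};\Z/p^{r_j}_j)$, then apply Lemma~\ref{lemma_mapping cone of b} directly when $p_j>3$, and when $p_j=3$ combine Lemma~\ref{lemma_mapping cone of a} with Lemma~\ref{lemma_no cup prod}(2) before invoking Lemma~\ref{lemma_mapping cone of b}. The only cosmetic difference is that you phrase the passage from $C_{y_j}$ to $C_{y'_j}$ via a retract argument, while the paper simply observes that $C_{y_j}$ is $C_{y'_j}$ wedged with spheres and Moore spaces.
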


\begin{proof}
Recall that $y_j$ is the composite $S^6\overset{y'_j}{\to}{\Sigma\textbf{P}^3(p^{r_j}_j)}\hookrightarrow\Sigma M'_5$. Let $C$ be the mapping cone of $y_j$ and $C'$ the mapping cone of $y'_j$. Clearly $C$ is $C'$ wedged with spheres and Moore spaces. We know from Lemma~\ref{lemma_map cone of component no cup prod} that $\tilde{H}^*(C;\Z/p^{r_j}_j)$ has trivial cup products, and thus it follows that $\tilde{H}^*(C';\Z/p^{r_j}_j)$ must also have trivial cup products.

%As $y_j$ is the composite $S^6\overset{y'_j}{\to}{\color{red}\Sigma\textbf{P}^3(p^{r_j}_j)}\hookrightarrow\Sigma M'_5$ there is a diagram of cofibration sequences
%\[
%\xymatrix{
%S^6\ar[r]^-{y'_j}\ar@{=}[d]	&{\color{red}\Sigma\textbf{P}^3(p^{r_j}_j)}\ar[r]\ar[d]^-{\text{incl}}	&C'\ar[d]^-{q}\\
%S^6\ar[r]^-{y_j}			&\Sigma M'_5\ar[r]\ar[d]^-{\text{pinch}}		&C\ar[d]\\
%	&Q\ar@{=}[r]	&Q
%}
%\]
%where $C'$ and $C$ are the respective mapping cones of $y'_j$ and $y_j$, $q$ is an induced map, and
%\[\color{red}
%\textstyle
%Q\simeq\bigvee^b_{i=1}(\Sigma\textbf{S}^2_i\vee S^5)\vee\bigvee_{\substack{1\leq l\leq c \\ l\neq j}}\Sigma\textbf{P}^3(p^{r_l}_l)\vee\bigvee^c_{l=1}P^5(p^{r_l}_l).
%\]
%The right column implies $q^*:H^i(C;\Z/p^{r_j}_j)\to H^i(C';\Z/p^{r_j}_j)$ is surjective for $i<7$ and isomorphic for $i=7$. Since all cup products in $\tilde{H}^*(C;\Z/p^{r_j}_j)$ are trivial by Lemma~\ref{lemma_map cone of component no cup prod}, all cup products in $\tilde{H}^*(C';\Z/p^{r_j}_j)$ are trivial.

Now, if $p_j>3$, then Lemma~\ref{lemma_pi_6(P^4)} gives $y'_j\simeq k[\one,\one]\circ\phi$, and Lemma~\ref{lemma_mapping cone of b} then implies $k=0$. If $p_j=3$, then Lemma~\ref{lemma_pi_6(P^4)} gives $y'_j\simeq k([\one,\one]\circ\phi)+l(\imath_j\circ\alpha_1)$ for some $k\in\Z/3^{r_j}\Z$ and $l\in\Z/3$. Let $C_k$ and $D_l$ be the mapping cones of $k([\one,\one]\circ\phi)$ and $l(\imath_j\circ\alpha_1)$. As all cup products in~$\tilde{H}^*(D_l;\Z/p^{r_j}_j)$ are trivial by Lemma~\ref{lemma_mapping cone of a}, all cup products in $\tilde{H}^*(C_k;\Z/p^{r_j}_j)$ are trivial by Lemma~\ref{lemma_no cup prod}. Therefore Lemma~\ref{lemma_mapping cone of b} implies $k=0$.
\end{proof}

Lemmas~\ref{lemma_attaching map no wh prod} and~\ref{lemma_attaching map no b maps} imply that $\Sigma M'\simeq_{(\frac{1}{2})}(\Sigma X\cup e^7)\vee\bigvee^b_{i=1}S^5\vee\bigvee^c_{j=1}P^5(p^{r_j}_j)$ and the non-trivial components of the attaching map $\tilde{f}$ can only be $x_i$'s and $y_j$'s for $p_j=3$. Lemma~\ref{lemma_mapping cone of a} tells us that these maps are detected by $\mathcal{P}^1$. Hence if $\mathcal{P}^1$ acts trivially on $H^*(M';\Z/3)$, then the attaching map $\tilde{f}_{(\frac{1}{2})}$ is null homotopic and we obtain the following lemma.
\begin{lemma}\label{lemma_Sigma M' hmtpy type P^1 trivial}
Suppose $\mathcal{P}^1$ acts trivially on $H^*(M';\Z/3)$. Then
\[
\pushQED{\qed}
\textstyle
\Sigma M'\simeq_{(\frac{1}{2})}\bigvee^b_{i=1}(\Sigma\textbf{S}^2_i\vee S^5)\vee\bigvee^c_{j=1}(\Sigma\textbf{P}^3(p^{r_j}_j)\vee P^5(p^{r_j}_j))\vee S^7. \qedhere
\popQED
\]
\end{lemma}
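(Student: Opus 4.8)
The plan is to build directly on the discussion immediately preceding the statement, which has already reduced the problem to a single attaching map. Combining Lemma~\ref{lemma_hmtpy type of Sigma M_5} with Lemmas~\ref{lemma_attaching map no wh prod} and~\ref{lemma_attaching map no b maps}, after inverting $2$ we have
\[
\textstyle\Sigma M'\simeq_{(\frac{1}{2})}(\Sigma X\cup_{\tilde f'}e^7)\vee\bigvee^b_{i=1}S^5\vee\bigvee^c_{j=1}P^5(p^{r_j}_j),
\]
where the attaching map of the top cell factors through $\Sigma X$ and satisfies $\tilde f'\simeq_{(\frac{1}{2})}\sum^b_{i=1}x_i+\sum_{p_j=3}y_j$, with $x_i\colon S^6\xrightarrow{d_i\alpha_1}\Sigma\textbf{S}^2_i\hookrightarrow\Sigma X$ for some $d_i\in\Z/3$ (using that $\pi_6(S^3)$ is generated $2$-locally by $\alpha_1$) and $y_j\colon S^6\xrightarrow{d_j\alpha_1}S^3\xrightarrow{\imath_j}\Sigma\textbf{P}^3(p^{r_j}_j)\hookrightarrow\Sigma X$ for some $d_j\in\Z/3$ by Lemma~\ref{lemma_attaching map no b maps}. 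If every $d_i$ and $d_j$ vanishes then $\tilde f'\simeq_{(\frac{1}{2})}*$, so $\Sigma X\cup_{\tilde f'}e^7\simeq_{(\frac{1}{2})}\Sigma X\vee S^7$, and reassembling the wedge summands, using $\Sigma X=\bigvee^b_{i=1}\Sigma\textbf{S}^2_i\vee\bigvee^c_{j=1}\Sigma\textbf{P}^3(p^{r_j}_j)$, gives exactly the claimed equivalence. Thus the whole proof reduces to showing each $d_i$ and $d_j$ is zero.

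For that I would exploit the $\mathcal{P}^1$-action. Since $\mathcal{P}^1$ is a stable operation it commutes with suspension, so the hypothesis is equivalent to the triviality of $\mathcal{P}^1\colon H^3(\Sigma M';\Z/3)\to H^7(\Sigma M';\Z/3)$. The wedge summands $S^5$ and $P^5(p^{r_j}_j)$ carry no mod-$3$ cohomology in degrees $3$ or $7$ (for $p_j>3$ they are $3$-locally contractible; for $p_j=3$ the Moore space has cohomology only in degrees $4,5$), so this is in turn equivalent to the triviality of $\mathcal{P}^1$ on $H^*(Z;\Z/3)$, where $Z=\Sigma X\cup_{\tilde f'}e^7$. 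Here $H^3(Z;\Z/3)$ has the evident basis $\{z_m\}$ indexed by the wedge summands of $\Sigma X$, while $H^7(Z;\Z/3)\cong\Z/3$ is generated by the class of the top cell.

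The heart of the argument is to compute $\mathcal{P}^1(z_m)$ one summand at a time. For each summand of $\Sigma X$ let $q_m\colon\Sigma X\to(\text{$m$-th summand})$ be the pinch map; since $q_m\circ\tilde f'$ is the corresponding component of $\tilde f'$, the map $q_m$ extends over the top cell to $\bar q_m\colon Z\to C_m$, where $C_m$ is the mapping cone of $q_m\circ\tilde f'$. Hence $C_m$ is $2$-locally trivial on a Moore summand with $p_j>3$; it is $S^3\cup_{d_i\alpha_1}e^7$ on the $i$-th sphere summand; and it is $D_{d_j}$, in the notation of Lemma~\ref{lemma_mapping cone of a}, on a Moore summand with $p_j=3$. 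Moreover $\bar q_m^*$ is an isomorphism on $H^7(-;\Z/3)$ and sends the degree-$3$ generator of $C_m$ to $z_m$. By naturality of $\mathcal{P}^1$, if $d_m\neq0$ then $\mathcal{P}^1$ is non-trivial on $H^*(C_m;\Z/3)$ — for the sphere case this is the classical detection of $\alpha_1$ by $\mathcal{P}^1$ recorded in the proof of Lemma~\ref{lemma_mapping cone of a}, and for the Moore case it is exactly Lemma~\ref{lemma_mapping cone of a} — so $\mathcal{P}^1(z_m)=\bar q_m^*(\mathcal{P}^1(\text{degree-$3$ generator}))\neq0$, contradicting the triviality of $\mathcal{P}^1$ on $Z$. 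Therefore all $d_i$ and $d_j$ vanish, which completes the proof.

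The main obstacle is the bookkeeping in the third paragraph: verifying that the pinch maps genuinely split off the individual components of $\tilde f'$, that the extended maps $\bar q_m$ induce the asserted behaviour on $H^3$ and $H^7$, and that naturality of $\mathcal{P}^1$ then transports the one-cell-complex detection statements back to $Z$. The remaining steps — the reduction at the start and the reassembly of the wedge at the end — are immediate from results already in hand.
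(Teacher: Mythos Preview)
Your argument is correct and is essentially the paper's approach, spelled out in more detail: the paper simply remarks in the paragraph preceding the lemma that the surviving components $x_i$ and $y_j$ are detected by $\mathcal{P}^1$ (via Lemma~\ref{lemma_mapping cone of a} and the classical fact for $\alpha_1$), and your pinch-map argument is exactly the mechanism making this precise. One small slip: where you write ``$\pi_6(S^3)$ is generated $2$-locally by $\alpha_1$'' you mean \emph{after inverting $2$} (equivalently, $3$-locally); the $2$-local part of $\pi_6(S^3)$ is $\Z/4$ and has nothing to do with $\alpha_1$.
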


Suppose $\tilde{f}_{(\frac{1}{2})}$ is not homotopic to the constant map. Then some of the $x_i$'s and $y_j$'s in~(\ref{equation_tilde f Hilton Milnor decomp}) are non-trivial. Let $\bar{b}$ and $\bar{c}$ be the numbers of non-trivial $x_i$'s and $y_j$'s. Notice that if $y_{j}$ is non-trivial then $p_j=3$. Relabeling the indices, we assume that $x_1,\ldots,x_{\bar{b}}$ and $y_1,\ldots,y_{\bar{c}}$ are non-trivial and $r_1\leq \cdots\leq r_{\bar{c}}$. Then the homotopy equivalence~(\ref{equation_tilde f Hilton Milnor decomp}) becomes
\begin{equation}
\label{def of c bar}
\textstyle\tilde{f}\simeq_{(\frac{1}{2})}\sum^{\bar{b}}_{i=1}x_{i}+\sum^{\bar{c}}_{j=1}y_{j},
\end{equation}
where the $x_i$'s and $y_j$'s are composites
\[
x_{i}:S^6\overset{A_i\alpha_1}{\longrightarrow}{\Sigma\textbf{S}^2_{i}}\hookrightarrow\Sigma M_5',\qquad
y_{j}:S^6\overset{B_j\alpha_1}{\longrightarrow}S^3\overset{\imath_{j}}{\longrightarrow}{\Sigma\textbf{P}^3(p^{r_{j}}_{j})}\hookrightarrow\Sigma M_5'
\]
for $A_i,B_j\in\{1,-1\}$. We are going to simplify the expression of $\tilde{f}$.

\begin{lemma}\label{lemma_construction of hmtpy equiv}
Let $\tilde{\varphi}:X\to X$ be a homotopy equivalence. Then there exists a CW-complex $N$ and a homotopy equivalence $\varphi:M'\to N$ such that $N$ is formed by attaching 3-cells, 4-cells and one 6-cell to $X$ and the restriction of $\varphi$ to $X$ is the composite
\[
X\overset{\tilde{\varphi}}{\longrightarrow}X\hookrightarrow N.
\]
\end{lemma}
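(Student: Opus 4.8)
The plan is to promote the given self-equivalence $\tilde\varphi$ of $X$ through the two cofibration sequences in~(\ref{exact seq_M'5 construction}) that build $M'_5$, and then through the attachment of the top $6$-cell. Recall that $M'$ is obtained from $X$ by first attaching $\bigvee^c_{j=1}P^4(p^{r_j}_j)$ (via $\Sigma$ of nothing — rather, by attaching $3$-cells along $g$ to form $C_g$, then $3$-cells along $g'$ to form $M'_5$), and finally attaching one $6$-cell along $\tilde f$ composed appropriately. So I would build $N$ in the same three stages, using $\tilde\varphi$ to transport each attaching map.

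\emph{Stage 1.} Consider $g:\bigvee^c_{j=1}P^3(p^{r_j}_j)\to X$. Set $N_1$ to be the mapping cone of the composite $\bigvee^c_{j=1}P^3(p^{r_j}_j)\overset{g}{\to}X\overset{\tilde\varphi}{\to}X$. Since $\tilde\varphi$ is a homotopy equivalence, the universal property of mapping cones (or the gluing lemma for cofibrations) yields a homotopy equivalence $\varphi_1:C_g\to N_1$ extending $\tilde\varphi$ on $X$; concretely, $\varphi_1$ is induced on cones from the homotopy-commuting square with vertical maps $\mathrm{id}$ and $\tilde\varphi$. \emph{Stage 2.} Now $M'_5=C_{g'}$ where $g':\bigvee^b_{i=1}S^3\to C_g$. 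Form $N_2$ as the mapping cone of $\bigvee^b_{i=1}S^3\overset{g'}{\to}C_g\overset{\varphi_1}{\to}N_1$; again the induced map on cones gives a homotopy equivalence $\varphi_2:M'_5\to N_2$ restricting to $\tilde\varphi$ on $X$ (since $\varphi_1$ did). Note $N_2$ is obtained from $X$ by attaching $3$-cells and $4$-cells, as required. \emph{Stage 3.} Finally $M'=M'_5\cup_{\tilde f'}e^6$ where $\tilde f':S^5\to M'_5$ is the attaching map of the $6$-cell (the map $f'$ of~(\ref{equation_M_5=M'_5 v S^3})). Let $N$ be the mapping cone of $S^5\overset{f'}{\to}M'_5\overset{\varphi_2}{\to}N_2$, and let $\varphi:M'\to N$ be the induced homotopy equivalence, which by construction restricts to $\tilde\varphi$ on $X\hookrightarrow M'_5\hookrightarrow M'$. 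Then $N$ is formed from $X$ by attaching $3$-cells, $4$-cells and one $6$-cell, and $\varphi|_X$ is $X\overset{\tilde\varphi}{\to}X\hookrightarrow N$ as claimed.

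The only point requiring care — and the one I'd expect to be the main obstacle — is verifying at each stage that the map induced on mapping cones is genuinely a homotopy equivalence and not merely a map inducing isomorphisms on homology; this is the standard fact that if in a map of cofibration sequences the two left-hand maps are homotopy equivalences then so is the induced map on cofibres, applied three times (with one of the two left maps always the identity). One must also track that the restriction to $X$ is \emph{equal}, not just homotopic, to $\tilde\varphi$ followed by the inclusion: this is automatic from the explicit formula for the induced map on a mapping cone, which fixes the base of the cone. A final small bookkeeping item is that the $6$-cell attachment in $M'$ should be phrased via $f'$ rather than the suspended attaching map $\tilde f$ used elsewhere; since we are not suspending here, $M'$ really is $X$ with $3$-, $4$-, and one $6$-cell attached, so the statement's cell dimensions are exactly matched.
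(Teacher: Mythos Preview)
Your proposal is correct and follows essentially the same three-stage argument as the paper: form $N_1=C_{\tilde\varphi\circ g}$, then $N_2=C_{\varphi_1\circ g'}$, then $N=C_{\varphi_2\circ f'}$, obtaining induced homotopy equivalences $\varphi_1,\varphi_2,\varphi$ at each step. The only slip is in your informal parenthetical, where you say $C_g$ is formed by ``attaching $3$-cells along $g$'' and $M'_5$ by ``$3$-cells along $g'$''; in fact coning $\bigvee P^3(p_j^{r_j})$ adds both $3$- and $4$-cells, and coning $\bigvee S^3$ adds $4$-cells --- but this does not affect the argument.
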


\begin{proof}
First form the following two diagrams
\[
\xymatrix{
\bigvee^c_{j=1}P^3(p^{r_j}_j)\ar[r]^-{g}\ar@{=}[d]				&X\ar[r]\ar[d]^-{\tilde{\varphi}}	&C_{g}\ar[d]^-{\varphi_1}\\
\bigvee^c_{j=1}P^3(p^{r_j}_j)\ar[r]^-{\tilde{\varphi}\circ g}	&X\ar[r]							&C_{\tilde{\varphi}\circ g}
}\quad
\xymatrix{
\bigvee^b_{i=1}S^3\ar[r]^-{g'}\ar@{=}[d]		&C_g\ar[r]\ar[d]^-{\varphi_1}			&M'_5\ar[d]^-{\varphi_2}\\
\bigvee^b_{i=1}S^3\ar[r]^-{\varphi_1\circ g'}	&C_{\tilde{\varphi}\circ g}\ar[r]		&C_{\varphi_1\circ g'}
}
\]
where the top rows are the cofibration sequences in~(\ref{exact seq_M'5 construction}), $\varphi_1$ and $\varphi_2$ are induced maps, and $C_{\varphi'\circ g}$ and $C_{\varphi_1\circ g'}$ are mapping cones of $\varphi'\circ g$ and $\varphi_1\circ g'$. Since $\tilde\varphi$ is a homotopy equivalence, so is $\varphi_1$, and hence also $\varphi_2$. 

Next let $f':S^5\to M'_5$ be the attaching map of the top cell in $M'$ and let $N$ be the mapping cone of $\varphi_2\circ f$. Then we have the following diagram
\[
\xymatrix{
S^5\ar[r]^-{f'}\ar@{=}[d]		&M'_5\ar[r]\ar[d]^-{\varphi_2}	&M'\ar[d]^-{\varphi}\\
S^5\ar[r]^-{\varphi_2\circ f'}	&C_{\varphi_1\circ g'}\ar[r]	&N
}
\]
where the rows are cofibration sequences and $\varphi$ is an induced map. Again $\varphi$ is a homotopy equivalence. By construction $N$ is formed by attaching 3-cells, 4-cells and one 6-cell to $X$, and the restriction of $\varphi$ to $X$ is the composite of $\tilde\varphi:X\to X$ and the inclusion $X\hookrightarrow N$.
\end{proof}

\begin{lemma}\label{lemma_Sigma M' hmtpy type P^1 non-trivial}
Suppose $\mathcal{P}^1$ acts non-trivially on $H^*(M';\Z/3)$. Then there exists a homotopy equivalence $\varphi:M'\to N$, where $N$ is formed by attaching 3-cells, 4-cells and one 6-cell to $X$. If $\bar{b}\geq 1$, then $N$ can be chosen so that
\[
\textstyle
\Sigma N\simeq_{(\frac{1}{2})}\bigvee^{b}_{i=2}\Sigma\textbf{S}^2_i\vee\bigvee^b_{i'=1}S^5\vee\bigvee^c_{j=1}(\Sigma\textbf{P}^3(p^{r_j}_j)\vee P^5(p^{r_j}_j))\vee\Sigma C_{\alpha_1}.
\]
If $\bar{b}=0$ and $\bar{c}\geq 1$, then $C$ can be chosen so that
\[
\textstyle
\Sigma N\simeq_{(\frac{1}{2})}\bigvee^b_{i=1}(\Sigma\textbf{S}^2_i\vee S^5)\vee\bigvee^c_{j=2}\Sigma\textbf{P}^3(p^{r_j}_j)\vee\bigvee^c_{j=1}P^5(p^{r_j}_j)\vee\Sigma C_{\imath_{\bar{c}}\circ\alpha_1}.
\]
Here $C_{\alpha_1}$ is the mapping cone of $\alpha_1:S^6\to\Sigma\textbf{S}^2_1$ and $C_{\imath_{\bar{c}}\circ\alpha_1}$ is the mapping cone of the composite $S^6\overset{\alpha_1}{\to}S^3\overset{\imath_{\bar{c}}}{\to}\Sigma\textbf{P}^3_{\bar{c}}(3^{r_{\bar{c}}})$. 
\end{lemma}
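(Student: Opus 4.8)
The plan is to reduce the statement, via Lemma~\ref{lemma_construction of hmtpy equiv}, to the construction of one self-homotopy-equivalence $\tilde\varphi:X\to X$, chosen so that after suspension it ``folds'' the several order-three classes in the attaching map $\tilde f$ of~(\ref{def of c bar}) onto a single summand. Recall from the paragraph before Lemma~\ref{lemma_Sigma M' hmtpy type P^1 trivial} that $\Sigma M'\simeq_{(\frac12)}(\Sigma X\cup_{\tilde f}e^7)\vee\bigvee^b_{i=1}S^5\vee\bigvee^c_{j=1}P^5(p^{r_j}_j)$, where under the Hilton-Milnor splitting $\Sigma X\simeq_{(\frac12)}\bigvee^b_{i=1}\Sigma\textbf{S}^2_i\vee\bigvee^c_{j=1}\Sigma\textbf{P}^3(p^{r_j}_j)$ the class $\tilde f$ lifts to $\sum^{\bar b}_{i=1}A_i(\imath^S_i\circ\alpha_1)+\sum^{\bar c}_{j=1}B_j(\imath_j\circ\alpha_1)\in\pi_6(\Sigma X)$; here $\imath^S_i:\Sigma\textbf{S}^2_i\hookrightarrow\Sigma X$ is the inclusion of a sphere summand, $\imath_j:S^3\to\Sigma\textbf{P}^3(p^{r_j}_j)\hookrightarrow\Sigma X$ is the inclusion of a bottom cell (so $p_j=3$ for $j\le\bar c$), and $A_i,B_j\in\{1,-1\}$. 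Given any self-homotopy-equivalence $\tilde\varphi:X\to X$, Lemma~\ref{lemma_construction of hmtpy equiv} produces $N$, built from $X$ by attaching $3$-, $4$- and one $6$-cell, together with a homotopy equivalence $\varphi:M'\to N$ restricting to $(X\hookrightarrow N)\circ\tilde\varphi$ on $X$. The $5$-skeleton of $N$ is built from $X$ by two cofibration sequences whose attaching maps still induce the zero map in homology, so the proof of Lemma~\ref{lemma_hmtpy type of Sigma M_5} applies verbatim and yields $\Sigma N_5\simeq_{(\frac12)}\Sigma X\vee\bigvee^b_{i=1}S^5\vee\bigvee^c_{j=1}P^5(p^{r_j}_j)$ with $\Sigma X\hookrightarrow\Sigma N_5$ the suspension of $X\hookrightarrow N_5$; tracking the $7$-cell of $\Sigma N$ through this, its attaching map becomes $\Sigma\tilde\varphi\circ\tilde f$. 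It therefore suffices to choose $\tilde\varphi$ with $\Sigma\tilde\varphi\circ\tilde f=\imath^S_1\circ\alpha_1$ when $\bar b\ge1$, and with $\Sigma\tilde\varphi\circ\tilde f=\imath_{\bar c}\circ\alpha_1$ when $\bar b=0$, $\bar c\ge1$: attaching a $7$-cell by either of these factors through a single wedge summand of $\Sigma X$, splitting off the mapping cone $C_{\alpha_1}$ (resp.\ $C_{\imath_{\bar c}\circ\alpha_1}$) and leaving the remaining summands unchanged, which produces the two asserted decompositions.

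Two facts make the choice of $\tilde\varphi$ possible. First, each $\imath^S_i\circ\alpha_1$ and $\imath_j\circ\alpha_1$ has order $3$ in $\pi_6(\Sigma X)$ (by Lemma~\ref{lemma_pi_6(P^4)} for the Moore-space classes, and because these inclusions are split injective), lies in its own Hilton-Milnor summand, and precomposition with $\alpha_1$ is additive on $\pi_3(\Sigma X)$ because away from $2$ all the relevant Hilton-Hopf invariants land in trivial groups. Second, the suspension $\pi_2(X)\to\pi_3(\Sigma X)$ is an isomorphism (Freudenthal), a self-map of $X$ is a homotopy equivalence once it induces an isomorphism on integral homology (Whitehead, as $X$ is simply connected), and for $r\ge r'$ there is a map $P^3(3^r)\to P^3(3^{r'})$ that is the identity on the bottom cell. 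When $\bar b\ge1$, let $\psi:X\to X$ be the identity on each $\textbf{S}^2_i$ with $i\ne1$ and on each $\textbf{P}^3(p^{r_j}_j)$, and send $\textbf{S}^2_1$ to the class of $\pi_2(X)$ whose $\textbf{S}^2_i$-component is $A_i$ for $i\le\bar b$ and whose $\textbf{P}^3(p^{r_j}_j)$-component is $B_j$ for $j\le\bar c$, all other components being zero; the diagonal entry $A_1=\pm1$ is a unit, so $\psi$ is a homotopy equivalence, and by the facts above $(\Sigma\psi\circ\imath^S_1)\circ\alpha_1=\tilde f$, whence $\tilde\varphi:=\psi^{-1}$ gives $\Sigma\tilde\varphi\circ\tilde f=\imath^S_1\circ\alpha_1$. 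When $\bar b=0$ and $\bar c\ge1$, the ordering $r_1\le\cdots\le r_{\bar c}$ makes $r_{\bar c}$ maximal among $r_1,\dots,r_{\bar c}$, so for $j<\bar c$ there is a map $\textbf{P}^3(3^{r_{\bar c}})\to\textbf{P}^3(3^{r_j})$ as above; let $\psi$ be the identity off $\textbf{P}^3(3^{r_{\bar c}})$ and send $\textbf{P}^3(3^{r_{\bar c}})$ to $B_{\bar c}$ times its inclusion plus, for each $j<\bar c$, $B_j$ times the composite $\textbf{P}^3(3^{r_{\bar c}})\to\textbf{P}^3(3^{r_j})\hookrightarrow X$. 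Ordering the Moore summands by nondecreasing exponent makes the induced homology map triangular with unit diagonal, so $\psi$ is a homotopy equivalence and $(\Sigma\psi\circ\imath_{\bar c})\circ\alpha_1=\tilde f$, so $\tilde\varphi:=\psi^{-1}$ works. Since $\imath^S_1\circ\alpha_1$ (resp.\ $\imath_{\bar c}\circ\alpha_1$) factors through the summand $\Sigma\textbf{S}^2_1$ (resp.\ $\Sigma\textbf{P}^3(3^{r_{\bar c}})$) of $\Sigma X$, the space $\Sigma N$ splits off $C_{\alpha_1}$ (resp.\ $C_{\imath_{\bar c}\circ\alpha_1}$) from that one summand, and regrouping the remaining wedge factors yields exactly the two displayed formulas (reindexing the surviving Moore summands in the second case).

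The step I expect to be the main obstacle is the bookkeeping in the first paragraph: checking carefully that through the stack of cofibration sequences in Lemma~\ref{lemma_construction of hmtpy equiv} the new $7$-cell of $\Sigma N$ is attached precisely by $\Sigma\tilde\varphi$ applied to the old attaching map, equivalently that the splitting of $\Sigma N_5$ is compatible with $\tilde\varphi$ on the $\Sigma X$-factor. A secondary point requiring care is verifying that the maps $\psi$ constructed above are homotopy equivalences, which by Whitehead's theorem and the Hilton-Milnor description of $[X,X]$ (together with Lemma~\ref{lemma_Moore sp hmtpy gps}) reduces to the triangular-matrix computation in homology. The purely homotopy-theoretic inputs --- the orders of the classes $\imath^S_i\circ\alpha_1$ and $\imath_j\circ\alpha_1$, additivity of precomposition with $\alpha_1$ away from $2$, and the reduction maps between mod-$3^r$ Moore spaces --- are routine given the lemmas already established.
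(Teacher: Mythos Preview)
Your proposal is correct and follows essentially the same strategy as the paper: both reduce via Lemma~\ref{lemma_construction of hmtpy equiv} to constructing a self-equivalence $\tilde\varphi:X\to X$ that concentrates the $\alpha_1$-classes on a single wedge summand, and in both cases $\tilde\varphi$ is built from elementary maps between wedge summands (including, when $\bar b=0$, the reduction maps $P^3(3^{r_{\bar c}})\to P^3(3^{r_j})$ that are the identity on bottom cells). The only noteworthy difference is in verification. The paper writes down $\tilde\varphi$ directly as a composite of ``subtraction'' maps $h_i,h'_j,h''_j$ and then checks where $e^7$ attaches by computing $\tilde\varphi^*$ on $H^2(X;\Z/3)$ and invoking the naturality of $\mathcal{P}^1$ (which detects the surviving $\alpha_1$-classes). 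You instead construct the inverse $\psi=\tilde\varphi^{-1}$ and track the homotopy class of the attaching map directly, using that $\alpha_1$ is a co-H-map so that precomposition with it is additive. Your route is arguably more transparent about what $\tilde\varphi$ does to $\tilde f$; the paper's route, by working entirely in $H^*(N;\Z/3)$, sidesteps the compatibility check you flagged between the $\Sigma X$-summand of $\Sigma N_5$ and $\Sigma\varphi_2$, since $\mathcal{P}^1$ on $H^*(N)$ is read off immediately from $\varphi^*$ without reference to any splitting.
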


\begin{proof}
Let $\bar{b}\geq1$. Given a homotopy equivalence $\tilde{\varphi}:X\to X$,  Lemma~\ref{lemma_construction of hmtpy equiv} gives a suitable $N$ and a homotopy equivalence $\varphi:M'\to N$ whose restriction to $X$ is $X\overset{\tilde{\varphi}}{\to}X\hookrightarrow N$. As $M'$ and $N$ are homotopy equivalent, Lemmas~\ref{lemma_hmtpy type of Sigma M_5}, \ref{lemma_map cone of component no cup prod}, \ref{lemma_attaching map no wh prod} and~\ref{lemma_attaching map no wh prod} imply that $\Sigma N\simeq_{(\frac{1}{2})}(\Sigma X\cup e^7)\vee\bigvee^b_{i=1}S^5\vee\bigvee^c_{j=1}P^5(p^{r_j}_j)$, where $e^7$ attaches to $\Sigma\textbf{S}^2_i$ and $\Sigma\textbf{P}^3(3^{r_j})$ possibly by $\alpha_1$ and $\imath_j\circ\alpha_1$ that are detected by $\mathcal{P}^1$. Thus we will be done if we can construct a suitable $\tilde{\varphi}$ such that $e^7$ attaches only to $\Sigma\textbf{S}^2_1$. 

To this end, for $2\leq i\leq b$, let $h_i$ be the composite
\[
\textbf{S}^2_1\vee\textbf{S}^2_i\overset{\text{comult}\vee 1}{\longrightarrow}\textbf{S}^2_1\vee\textbf{S}^2_1\vee\textbf{S}^2_i\overset{1\vee(-A_iA_1+1)}{\longrightarrow}\textbf{S}^2_1\vee\textbf{S}^2_i
\]
on $\textbf{S}^2_1\vee\textbf{S}^2_i$ and the identity on the other wedge summands of $X$. For $1\leq j\leq\bar{c}$, let $\imath_j:\textbf{S}^2_1\to\textbf{P}^3(3^{r_j})$ be the inclusion of the bottom cell and let $h'_j$ be the composite
\[
\textbf{S}^2_1\vee \textbf{P}^3(3^{r_j})\overset{\text{comult}\vee 1}{\longrightarrow}\textbf{S}^2_1\vee \textbf{S}^2_1\vee \textbf{P}^3(3^{r_j})\overset{1\vee(-B_jA_1\imath_j+1)}{\longrightarrow}\textbf{S}^2_1\vee \textbf{P}^3(3^{r_j})
\]
on $\textbf{S}^2_1\vee\textbf{P}^3(3^{r_j})$ and the identity on the other wedge summands of $X$. Define $\tilde{\varphi}:X\to X$ to be the composite $\tilde{\varphi}=h_2\circ\ldots\circ h_{\bar{b}}\circ h'_1\circ\ldots\circ h'_{\bar{c}}$. For $1\leq i\leq b$, $1\leq j\leq c$ let~$\mu_i,\nu_j\in H^2(X;\Z/3)$  be the duals of the homology classes defined by the inclusions of~$\textbf{S}^2_i$ and the bottom cell of $\textbf{P}^3(p^{r_j}_j)$. Then $\tilde{\varphi}^*(\mu_i)=-A_1A_i\mu_1+\mu_i$ for $2\leq i\leq\bar{b}$, $\tilde{\varphi}^*(\nu_j)=-A_1B_j\mu_1+\nu_j$ for $1\leq j\leq\bar{c}$, and $\tilde{\varphi}^*(\mu_i)=\mu_i$ and $\tilde{\varphi}^*(\nu_j)=\nu_j$ for other cases. Using the naturality of $\mathcal{P}^1$ one can show that $e^7$ only attaches to $\Sigma\textbf{S}^2_1$ in $\Sigma N$. By the previous remarks we are complete.

Now let $\bar{b}=0$ and $\bar{c}\geq1$. Define a homotopy equivalence $\tilde{\varphi}:X\to X$ as follows. For $1\leq j<\bar{c}$ let $\tilde{\imath}_j:P^3(3^{r_{\bar{c}}})\to P^3(3^{r_j})$ be a map defined by the diagram of cofibration sequences
\[
\xymatrix{
S^2\ar[r]^-{3^{r_{\bar{c}}}}\ar[d]_-{3^{r_{\bar{c}}-r_j}}	&S^2\ar[r]\ar@{=}[d]	&P^3(3^{r_{\bar{c}}})\ar[d]^-{\tilde{\imath}_j}\\
S^2\ar[r]^-{3^{r_j}}								&S^2\ar[r]				&P^3(3^{r_j}),
}
\]
and let $h''_{j}:X\to X$ be the composite
\begin{eqnarray*}
\textbf{P}^3(3^{r_{\bar{c}}})\vee\textbf{P}^3(3^{r_j})
&\xrightarrow{\text{comult}\vee\one}&\textbf{P}^3(3^{r_{\bar{c}}})\vee\textbf{P}^3(3^{r_{\bar{c}}})\vee\textbf{P}^3(3^{r_j})\\
&\xrightarrow{\one\vee(-B_jB_{\bar{c}}\tilde{\imath}_j)\vee\one}&\textbf{P}^3(3^{r_{\bar{c}}})\vee\textbf{P}^3(3^{r_j})\vee\textbf{P}^3(3^{r_j})\\
&\xrightarrow{\one\vee\text{fold}}&\textbf{P}^3(3^{r_{\bar{c}}})\vee\textbf{P}^3(3^{r_j})
\end{eqnarray*}
on $\textbf{P}^3(3^{r_{\bar{c}}})\vee\textbf{P}^3(3^{r_j})$ and the identity on the other wedge summands of $X$. Define $\tilde{\varphi}:X\to X$ to be $\tilde{\varphi}=h''_1\circ\ldots\circ h''_{\bar{c}-1}$. Then $\tilde{\varphi}^*(\nu_j)=-B_1B_j\mu_{\bar{c}}+\nu_j$ for $1\leq j\leq\bar{c}-1$, $\tilde{\varphi}^*(\nu_j)=\nu^*_j$ for other cases and $\tilde{\varphi}^*(\mu_i)=\mu_i$ for $1\leq i\leq b$. Using the naturality of $\mathcal{P}^1$ one can show that $e^7$ attaches only to $\Sigma\textbf{P}^3(3^{r_{\bar{c}}})$ in $\Sigma N$. So we obtain the asserted homotopy equivalence.
\end{proof}

%We next restate Theorem~\ref{main thm_splitting of Sigma M} and prove it.

\begin{proof}[Proof of Theorem~\ref{main thm_splitting of Sigma M}]
By~(\ref{equation_Sigma M = Sigma M'v S^4}) $\mathcal{P}^1$ acts trivially on $H^*(M;\Z/3)$ if and only if $\mathcal{P}^1$ acts trivially on $H^*(M';\Z/3)$. Combining Lemmas~\ref{lemma_Sigma M' hmtpy type P^1 trivial} and~\ref{lemma_Sigma M' hmtpy type P^1 non-trivial} we obtain the theorem.
\end{proof}

\section{Applications}

This section contains applications for Theorem~\ref{main thm_splitting of Sigma M}. We show first how to decompose $h_*(M)$ for any generalized homology theory $h_*$ and record our findings as Theorem~\ref{thm_decomp of reduced cohmlgy thry}. After this we determine the homotopy types of certain gauge groups of principal bundles over $M$. A proof of Theorem~\ref{thm_decomp of current grp} is given in this section. Experts will be interested in the Theorems \ref{thm_gauge gp general decomp} and \ref{thm_gauge gp M/Y} which we also prove.

\subsection{Cohomology theories}

For our first application let $h^*$ be a generalized cohomology theory. For instance $h^*$ could be real or complex K-theory, or cobordism. Applying it to $M$ we may use Theorem~\ref{main thm_splitting of Sigma M} to decompose $h^*(M)$ up to 2-torsion. For technical reasons we use reduced cohomology, which we assume to satisfy the wedge axiom. A general reference for the subject is Hatcher's book~\cite[\S3.1]{hatcher} .

\begin{proof}[Proof of Theorem~\ref{thm_decomp of reduced cohmlgy thry}]
According to the Brown Representability Theorem there is an $\Omega$-spectrum $E=\{E_*\}$ such that $h^n(X)\cong [X,E_n]$ for any CW complex $X$ (see~\cite[\S4E]{hatcher}). The group structure on the homotopy set on the right-hand side comes from the infinite loop space structure of the component $E_n$. 

Now, the homotopical localization map $\phi:E_n\rightarrow (E_n)_{(\frac{1}{2})}$ is a loop map, so induces a homorphism $\phi_*:[X,E_n]\rightarrow[X,(E_n)_{(\frac{1}{2})}]$. When $X$ is a simply connected finite CW complex, $\phi_*$ is an algebraic localization \cite[Chapter 2, Corollary 6.5]{hiltonmislinroitberg}. Therefore in this case $h^n(X)\otimes\mathbb{Z}_{(\frac{1}{2})}\cong [X, (E_n)_{(\frac{1}{2})}]$, and moreover $[X,(E_n)_{(\frac{1}{2})}]\cong [X_{(\frac{1}{2})}, (E_n)_{(\frac{1}{2})}]$.

To apply this it will be enough to take $X=\Sigma M$, and since $(\Sigma M)_{\frac{1}{2}}\simeq \Sigma (M_{\frac{1}{2}})$, the group $[\Sigma M_{(\frac{1}{2})}, (E_n)_{(\frac{1}{2})}]$ spilts into a direct sum by virtue of the wedge decomposition given in Theorem~\ref{main thm_splitting of Sigma M}. The same reasoning applies also when $X$ is one of the

\end{proof}

\subsection{Gauge groups of principal bundles over $M$}\label{section_gauge gp of M}

Let $G$ be a topological group and $P$ be a principal $G$-bundle over $M$ induced by a pointed map $\gamma:M\to BG$. Then the isomorphism class of $P$ is classified by the unpointed homotopy class of $\gamma$. The \emph{gauge group} of~$P$, denoted $\G_{\gamma}(M,G)$, is the topological group consisting of all $G$-equivariant automorphisms of $P$ that fix $M$. Let $\map^*_{\gamma}(M,BG)$ and $\map_{\gamma}(M,BG)$ be the connected components of $\map^*(M,BG)$ and $\map(M,BG)$, respectively, that contain $\gamma$. By~\cite{AB83} there is a homotopy equivalence $B\G_{\gamma}(M,G)\simeq\map_{\gamma}(M,BG)$. We will apply Theorem~\ref{main thm_splitting of Sigma M} to determine the homotopy types of $\G_{\gamma}(M,G)$ for certain principal bundles $P$. In the sequel $G$ will denote a simple, simply connected, compact Lie group.

\subsubsection{Gauge groups of trivial bundles}
The trivial bundle $P=G\times M$ is induced by the constant map. Its gauge group $\G_{0}(M,G)$ is homeomorphic to $\map(M,G)$~\cite[Chapter 7.2]{husemoller}. We can use the following lemma and Theorem~\ref{main thm_splitting of Sigma M} to prove Theorem~\ref{thm_decomp of current grp}.

\begin{lemma}
Let $X$ be a space such that $\Sigma X\simeq\Sigma A\vee\Sigma B$ for some spaces $A$ and $B$. Then
\[
\map(X,G)\simeq G\times\map^*(A,G)\times\map^*(B,G).
\]
\end{lemma}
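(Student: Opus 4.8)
The plan is to split the mapping space $\map(X,G)$ using the evaluation fibration and then exploit the given suspension splitting of $X$. First I would recall that evaluation at the basepoint gives a fibration $\map^*(X,G)\to\map(X,G)\xrightarrow{\mathrm{ev}} G$, and that this fibration is split by the map sending $g\in G$ to the constant function at $g$ (using that $G$ is a group, hence in particular an $H$-space with the constant map $X\to *\to G$ giving a section). Since $G$ is a topological group (so that $\map(X,G)$ is itself a topological group and the fibration is a principal-type fibration, or more elementarily since the section is a homomorphism), the splitting is multiplicative and one obtains a homeomorphism $\map(X,G)\cong G\times\map^*(X,G)$. So the problem reduces to identifying $\map^*(X,G)$.

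Next I would use the adjunction-type identity $\map^*(X,G)\simeq\map^*(X,\Omega BG)\simeq\Omega\,\map^*(X,BG)\simeq\map^*(\Sigma X, BG)$; more directly, for any space $Y$ one has $\map^*(\Sigma X, Y)\cong\map^*(X,\Omega Y)$, and applying this with $Y=BG$ together with $\Omega BG\simeq G$ gives $\map^*(\Sigma X,BG)\simeq\map^*(X,G)$. Now feed in the hypothesis $\Sigma X\simeq\Sigma A\vee\Sigma B$: since $\map^*(-,BG)$ turns wedges into products, $\map^*(\Sigma A\vee\Sigma B,BG)\simeq\map^*(\Sigma A,BG)\times\map^*(\Sigma B,BG)$, and running the adjunction backwards on each factor yields $\map^*(\Sigma A,BG)\simeq\map^*(A,G)$ and likewise for $B$. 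Combining the three displayed equivalences gives $\map^*(X,G)\simeq\map^*(A,G)\times\map^*(B,G)$, and together with the first paragraph this is the claimed homotopy equivalence
\[
\map(X,G)\simeq G\times\map^*(A,G)\times\map^*(B,G).
\]

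I expect the only subtle point to be the very first step: justifying that the evaluation fibration splits \emph{and} that the resulting equivalence $\map(X,G)\simeq G\times\map^*(X,G)$ is genuine rather than merely an equivalence of the total space with a product as spaces. This is standard — the section $c:G\to\map(X,G)$, $c(g)=(\text{const}_g)$, is a group homomorphism splitting $\mathrm{ev}$, so $\map(X,G)\cong \ker(\mathrm{ev})\rtimes G=\map^*(X,G)\rtimes G$, whose underlying space is $\map^*(X,G)\times G$. One might alternatively cite the standard fact (used already in the discussion of $\G_0(M,G)\cong\map(M,G)$) that for a grouplike $H$-space $G$ the evaluation fibration over a connected base is trivial. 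The remaining steps are formal consequences of the exponential law and the fact that $\map^*(-,BG)$ is a (contravariant) functor sending homotopy cofibre sequences, in particular wedges, to homotopy fibre sequences, in particular products; no estimates or delicate arguments are needed.
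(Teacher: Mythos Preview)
Your proof is correct and follows essentially the same route as the paper: split the evaluation fibration $\map^*(X,G)\to\map(X,G)\to G$ via the inclusion of $G$ as constant maps, and then identify $\map^*(X,G)\simeq\map^*(\Sigma X,BG)\simeq\map^*(\Sigma A,BG)\times\map^*(\Sigma B,BG)\simeq\map^*(A,G)\times\map^*(B,G)$ using the loop--suspension adjunction and the wedge-to-product property. Your extra discussion of the splitting (via the semidirect product structure) is more detailed than the paper's one-line justification, but the argument is the same.
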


\begin{proof}
Consider the fibration $\map^*(X,G)\to\map(X,G)\overset{ev}{\to}G$, where $ev$ evaluates a map at the base point. The canonical inclusion $G\to\map(X,G)$ induces a homotopy equivalence $\map(X,G)\simeq G\times\map^*(X,G)$ and the lemma then follows from the sequence
\begin{eqnarray*}
\map^*(X,G)
&\simeq&\map^*(\Sigma X,BG)\\
%&\simeq&\map^*(\Sigma A\vee\Sigma B,BG)\\
&\simeq&\map^*(\Sigma A,BG)\times\map^*(\Sigma B,BG)\\
&\simeq&\map^*(A,G)\times\map^*(B,G).
\end{eqnarray*}
\end{proof}

\subsubsection{Gauge groups of $SU(n)$-bundles over $M$}
If $P$ is not the trivial bundle, then the gauge group of $P$ is difficult to compute in general. In this section we focus on gauge groups of principal $SU(n)$-bundles over $M$ whose second Chern classes are trivial.

As $SU(n)$ is connected, $BSU(n)$ is simply connected, and the pointed and unpointed homotopy classes of a given map $\gamma:M\to BSU(n)$ agree. Hence there is a one-to-one correspondence between the isomorphism classes of principal $SU(n)$-bundles over $M$ and members of the pointed homotopy set $[M,BSU(n)]$.

\begin{lemma}\label{lemma_classification of SU-bundle}
Let $A$ be a 6-dimensional CW-complex with $H^6(A)$ torsion free. Then for~\mbox{$n\geq3$}, there is a bijection $[A,BSU(n)]\cong H^4(A)\times H^6(A)$. In particular, if $P$ is a principal $SU(n)$-bundle over $A$ with second Chern class $c_2(P)$ and third Chern class $c_3(P)$, then the isomorphism class of $P$ is determined by the pair $(c_2(P),c_3(P))$.
\end{lemma}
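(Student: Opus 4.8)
The plan is to compute $[A, BSU(n)]$ by obstruction theory using the skeletal filtration of $A$, exploiting that $BSU(n)$ is $3$-connected with $\pi_4(BSU(n)) \cong \mathbb{Z}$, $\pi_5(BSU(n)) = 0$, and $\pi_6(BSU(n)) \cong \mathbb{Z}$ for $n \geq 3$ (the latter because $\pi_5(SU(n)) \cong \mathbb{Z}$ is in the stable range once $n \geq 3$). Since $A$ is $6$-dimensional and $BSU(n)$ is $3$-connected, any map $A \to BSU(n)$ is determined up to homotopy by its behaviour on the $6$-skeleton, and the only cells that matter are those in dimensions $4$, $5$, $6$.

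First I would set up the Federer / Atiyah--Hirzebruch style spectral sequence for $[A, BSU(n)]$, or equivalently argue directly: let $A_k$ denote the $k$-skeleton of $A$. A map is null on $A_3$ for connectivity reasons, so extending over $A_4$ involves a choice in $H^4(A; \pi_4 BSU(n)) = H^4(A)$, extending over $A_5$ meets an obstruction in $H^5(A; \pi_4 BSU(n))$ and, when no obstruction occurs, a choice in $H^5(A; \pi_5 BSU(n)) = 0$, and extending over $A_6 = A$ meets an obstruction in $H^6(A; \pi_5 BSU(n)) = 0$ and offers a choice in $H^6(A; \pi_6 BSU(n)) = H^6(A)$. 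The vanishing of $\pi_5 BSU(n)$ kills the $H^5$-obstruction to extending the $A_4$-map over $A_5$, and also kills the $H^6$-obstruction; so every map on $A_4$ extends, the extension over $A_5$ is unique, and over $A_6$ the extensions form a torsor under $H^6(A)$. This gives a short exact sequence of pointed sets $0 \to H^6(A) \to [A, BSU(n)] \to H^4(A) \to 0$.

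Next I would show this sequence splits canonically and that the extension is trivial, yielding the product decomposition $[A, BSU(n)] \cong H^4(A) \times H^6(A)$. The splitting of $[A, BSU(n)] \to H^4(A)$ comes from the fact that the generator of $H^4$ is realized by a map $A \to S^4 \to BSU(n)$ (composing a degree-one map on the top $H^4$-cell with the generator of $\pi_4 BSU(n)$), and more structurally from the $H$-space-like additivity: since $BSU(n)$ is an infinite loop space through this range (it is the base of a fibration with total space $ESU(n) \simeq *$, or better, one uses that $SU \to SU(n)$ is a $6$-equivalence and $BSU$ has the relevant infinite loop structure), the set $[A, BSU(n)]$ carries an abelian group structure compatible with the filtration, and the filtration quotients are $H^4(A)$ and $H^6(A)$, which are free abelian since $H^4(A)$ is free (Poincaré-type or the hypothesis that $A$ is $6$-dimensional with $H^6$ torsion-free forces the relevant $\mathrm{Ext}$ terms to vanish — specifically $H^6(A)$ torsion-free means no $\mathrm{Ext}^1$ contribution obstructs the splitting). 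Hence the extension of free abelian groups splits and we get the asserted bijection. Finally, I would identify the two coordinates with Chern classes: the composite $[A,BSU(n)] \to H^4(A)$ is (up to sign) $\gamma \mapsto c_2(P_\gamma)$ since $c_2$ is the universal class generating $H^4(BSU(n))$, and on the kernel the $H^6(A)$-coordinate records $c_3(P_\gamma)$, because $H^6(BSU(n)) \cong \mathbb{Z}$ is generated by $c_3$ for $n \geq 3$ and the obstruction-theoretic difference class is detected by pullback of $c_3$.

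The main obstacle I anticipate is not the vanishing of homotopy groups (which is classical and clean) but rather pinning down that the extension in $0 \to H^6(A) \to [A,BSU(n)] \to H^4(A) \to 0$ is genuinely split as sets in a way that respects the Chern-class interpretation — i.e.\ handling the potential non-linearity of obstruction classes (secondary operations, the fact that difference cochains for the $H^6$-extension could a priori depend quadratically on the $H^4$-data). This is exactly where the hypothesis $H^6(A)$ torsion-free earns its keep: it guarantees the relevant $\mathrm{Ext}^1(H_5(A), \mathbb{Z})$ and $\mathrm{Ext}^1(H_4(A), \pi_5)$ terms vanish and that the only possible cross-term would land in a group that is either zero ($\pi_5 BSU(n) = 0$) or absorbed into the free summand. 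I would make this precise by invoking the infinite-loop (or at least $H$-space up to dimension $6$) structure on $BSU(n)$ to linearize the classification, reducing the computation to the Atiyah--Hirzebruch spectral sequence for the generalized cohomology theory represented by $BSU(n)$ in this range, whose $E_2$-page in total degree $0$ consists precisely of $H^4(A)$ and $H^6(A)$ with no differentials for dimension reasons.
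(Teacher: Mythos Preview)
Your framework is the same as the paper's---obstruction theory through the Postnikov tower of $BSU(n)$---but you have misplaced the role of the hypothesis, and this is a genuine gap.

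First, a minor confusion: the vanishing of $\pi_5 BSU(n)$ does \emph{not} kill the obstruction in $H^5(A;\pi_4 BSU(n))$ to extending a given map $A_4\to BSU(n)$ over $A_5$; that obstruction involves $\pi_4$, not $\pi_5$. What you actually need is that every class in $H^4(A)$ lifts to $[A,BSU(n)]$, and this holds because the relevant $k$-invariant lands in $H^7(A)=0$---exactly the paper's argument.

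The real gap is at the other end. Your short exact sequence
\[
0 \longrightarrow H^6(A) \longrightarrow [A,BSU(n)] \longrightarrow H^4(A) \longrightarrow 0
\]
is not exact at $H^6(A)$ for free. The $6$th Postnikov section $Z_6$ sits in a principal fibration $K(\Z,6)\to Z_6\to K(\Z,4)$ whose $k$-invariant is $\delta Sq^2$, so the long exact sequence reads
\[
H^3(A)\xrightarrow{\ \delta Sq^2\ } H^6(A) \longrightarrow [A,Z_6] \longrightarrow H^4(A) \longrightarrow 0,
\]
and the fibre over each point of $H^4(A)$ is a torsor for $H^6(A)/\mathrm{im}(\delta Sq^2)$, not for $H^6(A)$. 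The hypothesis that $H^6(A)$ is torsion-free is used precisely here: $\delta Sq^2$ factors through $H^5(A;\Z/2)$ and hence has $2$-torsion image, which must then vanish. (Equivalently, in your AHSS language, the $d_3$-differential in the Atiyah--Hirzebruch spectral sequence for $K$-theory \emph{is} $\delta Sq^2$, so your claim of ``no differentials for dimension reasons'' is exactly the point that needs the hypothesis.) This is how the paper argues, invoking Rutter's computation of the isotropy.

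Finally, your splitting argument does not work as stated: $H^4(A)$ need not be free (the hypotheses say nothing about it), so the group extension need not split. But this is a red herring---the lemma only asserts a bijection of sets, and once the $H^6(A)$-action on each fibre is free, the bijection $[A,BSU(n)]\cong H^4(A)\times H^6(A)$ follows immediately by choosing a basepoint in each fibre.
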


\begin{proof}
Since $A$ is a 6-complex, to compute $[A,BSU(n)]$ it suffices to compute the homotopy classes of maps from $A$ into the $6^{\text{th}}$ Postnikov section $Z_6$ of $BSU(n)$. The low dimensional homotopy groups of $SU(n)$ for $n\geq3$ are well known, and since $\pi_{i+1}(BSU(n))\cong \pi_i(SU(n))$ for all $i\geq0$, an easy calculation shows that $Z_6$ sits in a principal fibration sequence
\[
\dots\longrightarrow K(\Z,3)\overset{\delta Sq^2}{\longrightarrow} K(\Z,6)\longrightarrow Z_6\longrightarrow K(\Z,4)\overset{\delta Sq^2}{\longrightarrow} K(\Z,7)
\]
where $\dx:K(\Z_2,6)\ra K(\Z,7)$ is the Bockstein connecting map. Apply $[A,-]$ to it and get the exact sequence of pointed sets
\[
\dots\ra H^3(A)\overset{\delta Sq^2}{\longrightarrow}H^6(A)\longrightarrow[A,Z_6]\longrightarrow H^4(A)\overset{\delta Sq^2}{\longrightarrow}H^7(A)
\]
Here $[A,K(\Z,4)]\cong H^4(A)$, and in this context we can understand the isomorphism as being induced by sending a map $f:A\to BSU(n)$ to $f^*(c_2)\in H^4(A)$. Since $H^7(A)=0$, any map~$A\to K(\Z,4)$ lifts to $Z_6$. 

Now, homotopy classes of lifts differ by the action of $[A,K(\Z,6)]\cong H^6(A)$. Rutter~\cite{JR} shows how to compute the isotropy groups of this action away from the constant map. Following his Theorem 1.3.1 we study the cohomology operation $\dx Sq^2:H^3(A)\ra H^6(A)$. Our assumptions ensure that this operation acts trivially, since it factors through $H^5(A;\Z_2)$, and $H^6(A)$ is torsion free. With the identifications above, we get that $[A,Z_6]$ is the disjoint union of copies of $H^6(A)$, one copy for each element of $H^4(A)$. The lemma follows.
\end{proof}

For $n\geq 3$, suppose $P$ is a principal $SU(n)$-bundle over $M$ with
\[
\begin{array}{c c c}
c_2(P)=(0,\ldots,0)\in\Z^b\oplus\bigoplus^c_{j=1}\Z/p^{r_j}_j
&\text{and}
&c_3(P)=l\in\Z.
\end{array}
\]
Denote the gauge group of $P$ by $\G_{\vec{0},l}(M,SU(n))$. We compute the homotopy type of $\G_{\vec{0},l}(M,SU(n))$ in two steps. The first, based on the idea of~\cite[Theorem 2.4]{so16}, is to find a 3-dimensional CW-complex $Y$ and a map $\jmath:Y\to M$ such that
\begin{equation}\label{equation_gauge gp decomposition}
\G_{\vec{0},l}(M,SU(n))\simeq\G_{\vec{0},l}(C_{\jmath},SU(n))\times\map^*(Y,SU(n)).
\end{equation}
where $C_{\jmath}$ is the mapping cone of $\jmath$. The second step, after making sense of the right-hand side of~\eqref{equation_gauge gp decomposition}, is to analyse the homotopy type of $C_{\jmath}$ and decompose $\G_{\vec{0},l}(C_{\jmath},SU(n))$ into a product of recognizable spaces.

Observe that both $M$ and $C_{\jmath}$ meet the criteria of Lemma~\ref{lemma_classification of SU-bundle} and hence we have
\[
[M,BSU(n)]\cong H^4(M)\times H^6(M)\quad
\text{and}\quad
[C_{\jmath},BSU(n)]\cong H^4(C_{\jmath})\times H^6(C_{\jmath})
\]
for $n\geq3$. A key assumption in the sequel will be that the suspension $\Sigma \jmath:\Sigma Y\rightarrow \Sigma M$ has a left homotopy inverse. This assumption implies that~$\Sigma M\simeq\Sigma Y\vee\Sigma C_{\jmath}$, and hence that the inclusion map $q:M\ra C_{\jmath}$ induces bijections on $H^4$ and $H^6$. In this way we obtain bijections
\begin{equation*}
q^*:[C_{\jmath},BSU(n)]\ra [M,BSU(n)],\qquad n\geq3.
\end{equation*}
The upshot is that in this case the path components of $\map(M,BSU(n))$ and $\map(C_{\jmath},BSU(n))$ are in correspondence and the labelling in equation \eqref{equation_gauge gp decomposition} makes sense.

\begin{lemma}\label{lemma_coaction of M/Y}
Let $\jmath:Y\to M$ be a map from a 3-dimensional CW-complex $Y$. Then there is an induced coaction $\psi':C_{\jmath}\to C_{\jmath}\vee S^6$ such that diagrams
\[
\begin{array}{c c c c}
\xymatrix{
M\ar[d]^-{q}\ar[r]^-{\psi}	&M\vee S^6\ar[d]^-{q\vee id}\\
C_{\jmath}\ar[r]^-{\psi'}			&C_{\jmath}\vee S^6
}
&\xymatrix{
C_{\jmath}\ar[d]^-{p}\ar[r]^-{\psi'}&C_{\jmath}\vee S^6\ar[d]^-{p\vee 1}\\
S^6\ar[r]^-{\sigma}	&S^6\vee S^6
}
&\xymatrix{
C_{\jmath}\ar[d]^-{\psi'}\ar[r]^-{\psi'}	&C_{\jmath}\vee S^6\ar[d]^-{\psi'\vee id}\\
C_{\jmath}\vee S^6\ar[r]^-{id\vee\sigma}	&C_{\jmath}\vee S^6\vee S^6
}
\end{array}
\]
are homotopy commutative, where $\psi:M\to M\vee S^6$ is the coaction associated with the minimal cell decomposition of $M$, $p:C_{\jmath}\to S^6$ is the pinch map, and $\sigma:S^6\to S^6\vee S^6$ is the comultiplication.
\end{lemma}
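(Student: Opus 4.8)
The plan is to recall that $M$ admits a minimal CW structure with a single $6$-cell, so its top cell induces a coaction $\psi\colon M\to M\vee S^6$ making $M$ into a co-$H$-comodule over the cogroup $S^6$; this is the standard coaction attached to the cofibration sequence $S^5\xra{f} M_5\to M$. Since $Y$ is $3$-dimensional and $\jmath\colon Y\to M$ can be assumed cellular, its image lands in $M_5$, so forming the mapping cone $C_\jmath$ does not disturb the top cell: the quotient map $q\colon M\to C_\jmath$ is a homeomorphism on a neighbourhood of the top cell and the composite $M\xra{q}C_\jmath\xra{p}S^6$ is the pinch map for $M$. I would construct $\psi'$ directly as the map on cofibres induced by the morphism of cofibration sequences
\[
\xymatrix{
Y\ar[r]^-{\jmath}\ar@{=}[d] & M_5\ar[r]\ar[d] & C'\ar[d]\\
Y\ar[r]^-{\jmath} & M_5\vee S^5?
}
\]
— more precisely, one observes that $\psi$ restricted to $M_5$ already factors as $M_5\to M_5\vee S^6$ is not quite available, so instead I would use that the $6$-cell of $C_\jmath$ is attached by the same map $f$ (up to the identification $M_5\simeq M_5$), hence $C_\jmath$ carries the analogous coaction coming from its own cofibration sequence $S^5\xra{f} (M_5\cup_\jmath CY)\to C_\jmath$, and by naturality of the coaction construction in the pair $(\text{skeleton},\text{attaching map})$ the square with $q$ and $q\vee\mathrm{id}$ commutes.

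With $\psi'$ so defined, the three homotopy-commutativity statements become formal consequences. The first square (compatibility of $\psi'$ with $q$) is immediate from the construction just sketched, since $\psi'$ is manufactured precisely to be the image of $\psi$ under $q$. The second square says that $\psi'$ covers the comultiplication $\sigma$ on $S^6$ under the pinch maps; this follows because composing $\psi'$ with $p\vee 1$ collapses everything in $C_\jmath$ below the top cell, and the induced self-map of $S^6\vee S^6$ is by inspection the one that records "split the top cell in two", i.e. $\sigma$ — alternatively one pulls this back along $q$ using the first square and the corresponding known identity for $\psi$ on $M$, noting $q$ induces a bijection on $H^6$ and hence on $\pi_6^{S}$ in the relevant range. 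The third square, coassociativity of $\psi'$, likewise descends from coassociativity of $\psi$ on $M$ via the first square, or can be proved intrinsically the same way one proves coassociativity of the standard coaction on any two-stage complex: both composites $(\psi'\vee\mathrm{id})\circ\psi'$ and $(\mathrm{id}\vee\sigma)\circ\psi'$ are the map classified, under the cofibration sequence defining $C_\jmath$, by the obvious "triple pinch," and these agree because $\Sigma f$ being a suspension makes the relevant Whitehead/coassociativity obstruction vanish.

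The main obstacle is purely bookkeeping: making the phrase "the coaction induced by $\jmath$" precise enough that naturality in $q$ is manifest rather than asserted. Concretely, one must check that the standard coaction on a complex $Z = Z_{n-1}\cup_h e^n$, viewed as the composite $Z\to Z/Z_{n-2}\simeq Z_{n-1}/Z_{n-2}\cup e^n \to (\text{pinch})$, is functorial for maps of pairs that are the identity on the attaching sphere; once that is set up for the pair $\big((M_5, f)\to (M_5\cup_\jmath CY, f)\big)$ all three diagrams fall out simultaneously. I would therefore organize the proof as: (i) recall the coaction and its two axioms for a generic two-cell-type complex; (ii) observe $\jmath$ cellular $\Rightarrow$ $C_\jmath$ has the same top-cell attaching data as $M$; (iii) define $\psi'$ as the induced coaction and read off the first square by construction; (iv) deduce the second and third squares by applying $q^*$ (a bijection on $H^6$, hence detecting these maps) to the already-known identities for $\psi$ on $M$, or equivalently by the same formal argument one cell lower.
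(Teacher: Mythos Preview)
Your approach matches the paper's: factor $\jmath$ through $M_5$ (since $Y$ is $3$-dimensional), recognise $C_\jmath$ as the cofibre of $S^5\xrightarrow{q'\circ f} C_{\jmath'}$ where $C_{\jmath'}=M_5\cup_\jmath CY$, define $\psi'$ as the standard coaction of that cofibre sequence, obtain the first square from the morphism of cofibre sequences induced by $q':M_5\to C_{\jmath'}$, and deduce the remaining two from the general axioms for coactions (the paper simply cites Arkowitz for these). In writing it up, drop the abandoned diagram and the ``pull back along $q$ via $H^6$'' alternative---agreement on $H^6$ after precomposition with $q$ does not by itself establish homotopy of maps into $C_\jmath\vee S^6\vee S^6$---and keep only the intrinsic coaction argument, which is what actually works.
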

\begin{proof}
Because $Y$ is 3-dimensional, the map $\jmath$ factors through a map $\jmath':Y\to M_5$. If $f:S^5\ra M_5$ is the attaching map for the top cell of $M$, then there is an induced homotopy commutative diagram 
\[
\xymatrix{&S^5\ar[d]_-{f}\ar@{=}[r]&S^5\ar[d]^-{f'}\\
Y\ar[r]^-{\jmath'}\ar@{=}[d]&M_5\ar[r]^-{q'}\ar[d]&C_{\jmath'}\ar[d]\\
Y\ar[r]^-{\jmath}&M\ar[r]^-q&C_{\jmath}}
\]
in which $C_{\jmath'}$ is the mapping cone of $\jmath'$, $q'$ is the inclusion map and $f'$ is the composite of $f$ and $q'$, the bottom right square is a homotopy pushout, and the rows and colums are cofibre sequences. The coaction $\psi'$ is that associated with the right-hand column, and its interaction with $\psi$ (that is, the commutativity of the first square in the proposition statement) is granted by the diagram. The second and third diagrams of the proposition statement follow from the standard properties of coactions~\cite{A01}.
\end{proof}

\begin{lemma}\label{lemma_connecting map null hmtpic}
Assume that $\Sigma \jmath:\Sigma Y\to\Sigma M$ has a left homotopy inverse. Then writing $\vec{0}=(0,\ldots,0)$, for any $l\in\Z$ there is a homotopy fibration sequence
\begin{equation}\label{mapspacehomotopfibseq}
\map^*(Y,SU(n))\overset{\delta_l}{\to}\map^*_{\vec{0},l}(C_{\jmath},BSU(n))\overset{q^*}{\to}\map^*_{\vec{0},l}(M,BSU(n))
\end{equation}
in which the map $\delta_l$ is null homotopic.
\end{lemma}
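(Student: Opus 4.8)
The plan is to realise \eqref{mapspacehomotopfibseq} as a segment of the Puppe fibration sequence of the cofibration $Y\xrightarrow{\jmath}M\xrightarrow{q}C_{\jmath}$, and then to exploit the splitting hypothesis on $\Sigma\jmath$ to kill its connecting map. First I would apply the contravariant functor $\map^*(-,BSU(n))$ to the Puppe cofibre sequence
\[
Y\xrightarrow{\jmath}M\xrightarrow{q}C_{\jmath}\xrightarrow{\partial}\Sigma Y\xrightarrow{\Sigma\jmath}\Sigma M\longrightarrow\cdots .
\]
Since $\map^*(-,BSU(n))$ carries cofibre sequences to fibration sequences, this yields a fibration sequence containing the segment
\[
\map^*(\Sigma Y,BSU(n))\xrightarrow{\partial^*}\map^*(C_{\jmath},BSU(n))\xrightarrow{q^*}\map^*(M,BSU(n)),
\]
and the suspension--loop adjunction together with $\Omega BSU(n)\simeq SU(n)$ identifies $\map^*(\Sigma Y,BSU(n))\simeq\map^*(Y,SU(n))$. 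As established above, the hypothesis that $\Sigma\jmath$ admits a left homotopy inverse makes $q^*$ a bijection on path components (via $\Sigma M\simeq\Sigma Y\vee\Sigma C_{\jmath}$ and Lemma~\ref{lemma_classification of SU-bundle}), so restricting the fibration over the component of $\map^*(M,BSU(n))$ labelled $(\vec{0},l)$ produces \eqref{mapspacehomotopfibseq}, with $\delta_l$ the inclusion of the fibre over a chosen representative $\gamma_l$ of $(\vec{0},l)$.

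I would then dispose of the case $l=0$. Let $r:\Sigma M\to\Sigma Y$ be a left homotopy inverse of $\Sigma\jmath$; applying $\map^*(-,BSU(n))$ gives $(\Sigma\jmath)^*\circ r^*\simeq\mathrm{id}$, so $(\Sigma\jmath)^*$ has a right homotopy inverse. In a fibration sequence consecutive maps compose to the constant map, so in the Puppe fibration sequence $\partial^*\circ(\Sigma\jmath)^*\simeq *$, whence
\[
\partial^*\;\simeq\;\partial^*\circ(\Sigma\jmath)^*\circ r^*\;\simeq\;*.
\]
Since the constant bundle lies in the component labelled $(\vec{0},0)$ and $\partial^*$ is precisely the fibre inclusion of $q^*$ there, this is the assertion that $\delta_0$ is null homotopic.

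For general $l$ I would reduce to the previous case by means of Lemma~\ref{lemma_coaction of M/Y}. Its first diagram shows that the coaction $\psi':C_{\jmath}\to C_{\jmath}\vee S^6$ is compatible through $q$ with the coaction $\psi:M\to M\vee S^6$ coming from the top cell of $M$, so the resulting action of the grouplike space $\map^*(S^6,BSU(n))\cong\pi_6(BSU(n))\cong\Z$ (here we use $n\geq 3$) on $\map^*(C_{\jmath},BSU(n))$ covers, via $q^*$, the analogous action on $\map^*(M,BSU(n))$, and it permutes transitively the path components lying over those with $c_2=\vec{0}$. Choosing a group element carrying the component indexed by $0$ to the one indexed by $l$ gives a self-homotopy-equivalence of $\map^*(C_{\jmath},BSU(n))$ taking $\map^*_{\vec{0},0}(C_{\jmath},BSU(n))$ onto $\map^*_{\vec{0},l}(C_{\jmath},BSU(n))$ and covering the analogous move downstairs; restricting to fibres this is a fibrewise homotopy equivalence from the $l=0$ fibration onto the $l$ one taking $\delta_0$ to $\delta_l$, so $\delta_l\simeq *$ as well. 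The first two steps are routine; the point requiring genuine care is the last one, namely checking from the first square of Lemma~\ref{lemma_coaction of M/Y} that the $S^6$-action is honestly fibrewise over $\map^*(M,BSU(n))$ and merely relabels path components, so that it really does transport the whole $l=0$ fibration — fibre inclusion included — onto the $l$ one.
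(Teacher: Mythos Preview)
Your proposal is correct and follows essentially the same route as the paper: both obtain the fibration from the Puppe sequence of $Y\to M\to C_\jmath$, kill $\delta_0$ using the left inverse of $\Sigma\jmath$, and then use the $S^6$-coactions of Lemma~\ref{lemma_coaction of M/Y} to build compatible self-equivalences of $\map^*(C_\jmath,BSU(n))$ and $\map^*(M,BSU(n))$ that carry the $l=0$ fibration onto the $l$ one. The paper merely makes the last step more explicit by writing the shift as the homotopy pullback square
\[
\xymatrix{
\map^*_{\vec 0,l}(C_\jmath,BSU(n))\ar[r]^-{\theta'_l}\ar[d]^{q^*}&\map^*_{\vec 0,0}(C_\jmath,BSU(n))\ar[d]^{q^*}\\
\map^*_{\vec 0,l}(M,BSU(n))\ar[r]^-{\theta_l}&\map^*_{\vec 0,0}(M,BSU(n))
}
\]
(with $\theta_l,\theta'_l$ given by acting by a fixed $f_{-l}\in\map^*_{-l}(S^6,BSU(n))$), which is exactly the ``fibrewise homotopy equivalence'' you describe.
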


\begin{proof}
Consider the induced map $q^*:Map^*(C_{\jmath},BSU(n))\ra Map^*(M,BSU(n))$. It's well known that the homotopy fibre of $q^*$ over the constant map is homotopy equivalent to~$Map^*(\Sigma Y,BSU(n))\simeq Map^*(Y,SU(n))$. We need to identify the homotopy fibres of $q^*$ over the other maps. 

We have the coaction $\psi':C_{\jmath}\ra C_{\jmath}\vee S^6$ from Lemma~\ref{lemma_coaction of M/Y}. Identifying 
\[
Map^*(C_{\jmath},BSU(n))\times Map^*(S^6,BSU(n))\cong Map^*(C_{\jmath}\vee S^6,BSU(n))
\]
we get a map
\begin{equation*}
\theta':Map^*(C_{\jmath},BSU(n))\times Map^*(S^6,BSU(n))\ra Map^*(C_{\jmath},BSU(n))
\end{equation*}
which is induced by $\psi'$. The fact that $\psi'$ is a coaction implies that $\theta'$ has the properties of a homotopy action. Similar reasoning converts the coaction $\psi:M\ra M\vee S^6$ into to a homotopy action
\begin{equation*}
\theta:Map^*(M,BSU(n))\times Map^*(S^6,BSU(n))\ra Map^*(M,BSU(n)).
\end{equation*}

Now, in terms of path components we have
\[
\pi_0(Map^*(C_{\jmath},BSU(n)))\cong[C_{\jmath},BSU(n)]\cong H^4(C_{\jmath})\times H^6(C_{\jmath})
\]
by the the discussion before Lemma~\ref{lemma_coaction of M/Y}, and $\pi_0(Map^*(S^6,BSU(n)))\cong H^6(S^6)\cong\mathbb{Z}$. The fact that the induced map
\begin{equation*}
\psi^{'*} :H^6(C_{\jmath})\oplus H^6(S^6)\cong H^6(C_{\jmath}\vee S^6)\xrightarrow{}H^6(C_{\jmath})
\end{equation*}
acts as addition $\mathbb{Z}\oplus\mathbb{Z}\rightarrow \mathbb{Z}$ shows that $\theta'$ restricts to components as a map
\begin{equation*}
\theta':Map^*_{(\vec{0},l)}(C_{\jmath},BSU(n))\times Map^*_m(S^6,BSU(n))\ra Map^*_{(\vec{0},l+m)}(C_{\jmath},BSU(n)),
\end{equation*}
where $l,m\in\mathbb{Z}$. The restriction of $\theta$ to components is analogous.

Next, fixing a map $f_{-l}\in Map^*_{-l}(S^6,BSU(n))$, let 
\[
\begin{array}{c}
\theta_l=\theta(-,f_{-l}):\map^*_{\vec{0},l}(M,BSU(n))\to\map^*_{\vec{0},0}(M,BSU(n))\\[8pt]
\theta'_l=\theta'(-,f_{-l}):\map^*_{\vec{0},l}(C_{\jmath},BSU(n))\to\map^*_{\vec{0},0}(C_{\jmath},BSU(n))
\end{array}
\]
be the induced maps. These are homotopy equivalences and fit in the following diagram
%\footnote{\color{red}Using the homotopies $(\psi\vee id)\circ\psi\simeq(id\vee\sigma)\circ\psi$ and $(\psi'\vee id)\circ\psi'\simeq(id\vee\sigma)\circ\psi'$, one can show that $\theta_l$ and $\theta_l'$ are homotopy equivalences. [comment: replace this footnote to save space?]} 
\begin{equation}\label{diagram_htpy pullback}
\begin{gathered}
\xymatrix{Map^*_{(\vec{0},l)}(C_{\jmath},BSU(n))\ar[r]_-\simeq^-{\theta'_l}\ar[d]_-{q^*}&Map^*_{(\vec{0},0)}(C_{\jmath},BSU(n))\ar[d]_-{q^*}\\
Map^*_{(\vec{0},l)}(M,BSU(n))\ar[r]_-\simeq^-{\theta_l}&Map^*_{(\vec{0},0)}(M,BSU(n)).}
\end{gathered}
\end{equation}
The fact that this diagram homotopy commutes is the content of the first homotopy commutativity square of Lemma~\ref{lemma_coaction of M/Y}. Since the horizontal maps are homotopy equivalences, the diagram is a homotopy pullback. It follows that the horizontal arrows induce a homotopy equivalence between the homotopy fibres of the vertical map. In this way we get the fibration sequence~\eqref{mapspacehomotopfibseq}. To complete we need only show that $\delta_l$ is null homotopic.

For this we extend the homotopy pullback~(\ref{diagram_htpy pullback}) by adding the homotopy fibres of the vertical maps to get
\begin{equation}\label{diagram_dlfact}
\begin{gathered}
\xymatrix{
\map^*(Y,SU(n))\ar[r]^-{\delta_l}\ar[d]_-\simeq	&\map^*_{\vec{0},l}(C_{\jmath},BSU(n))\ar[r]^-{q^*}\ar[d]^-{\theta_l'}_\simeq	&\map^*_{\vec{0},l}(M,BSU(n))\ar[d]^-{\theta_l}_\simeq\\
\map^*(Y,SU(n))\ar[r]^-{\delta_0}				&\map^*_{\vec{0},0}(C_{\jmath},BSU(n))\ar[r]^-{q^*}&\map^*_{\vec{0},0}(M,BSU(n)).
}
\end{gathered}
\end{equation}
The bottom sequence is induced by the maps in the cofibration sequence
\begin{equation*}
Y\xra{\jmath} M\xra{q} C_{\jmath}\xra{\dx} \Sigma Y\xra{\Sigma\jmath}\Sigma M\ra\dots
\end{equation*}
Since $\Sigma\jmath$ has a left homotopy inverse, the map $\delta$ is null homotopic. Since $\delta_0$ is induced by $\delta$, it too is null homotopic, and hence the factorization in (\ref{diagram_dlfact}) show that $\delta_l$ is null homotopic.
\end{proof}

Finally we prove the homotopy equivalence~(\ref{equation_gauge gp decomposition}).

\begin{lemma}\label{lemma_gauge decomp for dim <4}
Assume the notation and hypothesis of Lemma~\ref{lemma_connecting map null hmtpic}. For any $l\in\Z$ there is a homotopy equivalence
$
\G_{\vec{0},l}(M,SU(n))\simeq\G_{\vec{0},l}(C_{\jmath},SU(n))\times\map^*(Y,SU(n))$.
\end{lemma}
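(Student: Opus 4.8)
The plan is to deduce the product decomposition from Lemma~\ref{lemma_connecting map null hmtpic} by passing to classifying spaces and delooping once. Recall the homotopy equivalence $B\G_{\vec 0,l}(N,SU(n))\simeq\map_{\vec 0,l}(N,BSU(n))$ of~\cite{AB83}, applied with $N=M$ and $N=C_{\jmath}$; the labelling of path components is unambiguous because, as explained before Lemma~\ref{lemma_coaction of M/Y}, the map $q\colon M\to C_{\jmath}$ induces a bijection $[C_{\jmath},BSU(n)]\cong[M,BSU(n)]$ carrying the class $(\vec 0,l)$ to $(\vec 0,l)$. First I would compare the two basepoint-evaluation fibrations $\map^*_{\vec 0,l}(N,BSU(n))\to\map_{\vec 0,l}(N,BSU(n))\xrightarrow{ev}BSU(n)$ for $N=C_{\jmath}$ and $N=M$. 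Since $q$ preserves basepoints, $q^*$ commutes with $ev$, so we obtain a map of fibration sequences covering $\mathrm{id}_{BSU(n)}$. Taking homotopy fibres of the three vertical maps yields a fibration sequence whose base is $\mathrm{hofib}(\mathrm{id}_{BSU(n)})\simeq\ast$, so the induced map on the remaining two terms is a homotopy equivalence
\[
\mathrm{hofib}\bigl(q^*\colon\map^*_{\vec 0,l}(C_{\jmath},BSU(n))\to\map^*_{\vec 0,l}(M,BSU(n))\bigr)\ \simeq\ \mathrm{hofib}\bigl(q^*\colon\map_{\vec 0,l}(C_{\jmath},BSU(n))\to\map_{\vec 0,l}(M,BSU(n))\bigr),
\]
compatibly with the inclusions of these homotopy fibres into $\map^*_{\vec 0,l}(C_{\jmath},BSU(n))$ and the larger $\map_{\vec 0,l}(C_{\jmath},BSU(n))$. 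By Lemma~\ref{lemma_connecting map null hmtpic} the left-hand fibre is $\map^*(Y,SU(n))$ with null-homotopic inclusion $\delta_l$; composing with the subspace inclusion shows that the inclusion $\widetilde{\delta}_l\colon\map^*(Y,SU(n))\to\map_{\vec 0,l}(C_{\jmath},BSU(n))$ of the right-hand fibre is also null homotopic.

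Rewriting the two unpointed mapping spaces as classifying spaces of gauge groups, this gives a homotopy fibration sequence
\[
\map^*(Y,SU(n))\ \xrightarrow{\ \widetilde{\delta}_l\ }\ B\G_{\vec 0,l}(C_{\jmath},SU(n))\ \longrightarrow\ B\G_{\vec 0,l}(M,SU(n))
\]
in which $\widetilde{\delta}_l$ is null homotopic. I would then extend this sequence one term to the left along its Puppe sequence: the homotopy fibre of $\widetilde{\delta}_l$ is $\Omega B\G_{\vec 0,l}(M,SU(n))\simeq\G_{\vec 0,l}(M,SU(n))$. On the other hand the homotopy fibre of any null-homotopic map $f\colon A\to B$ is $A\times\Omega B$, so
\[
\G_{\vec 0,l}(M,SU(n))\ \simeq\ \mathrm{hofib}(\widetilde{\delta}_l)\ \simeq\ \map^*(Y,SU(n))\times\Omega B\G_{\vec 0,l}(C_{\jmath},SU(n))\ \simeq\ \G_{\vec 0,l}(C_{\jmath},SU(n))\times\map^*(Y,SU(n)),
\]
which is the claimed equivalence (using $\Omega B\G\simeq\G$ for the gauge groups, which have the homotopy type of CW complexes).

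The standard ingredients here---the equivalence $B\G\simeq\map(-,BG)$, the component bijection from Lemma~\ref{lemma_classification of SU-bundle}, the identification $\Omega B\G\simeq\G$, and the fact that a null-homotopic map $A\to B$ has homotopy fibre $A\times\Omega B$---require no comment. The step I expect to take the most care is the transfer of the null-homotopy of the fibre inclusion from the \emph{pointed} fibration of Lemma~\ref{lemma_connecting map null hmtpic} to the \emph{unpointed} one used to build the classifying space: one must arrange the comparison of the two evaluation fibrations cleanly enough that the resulting equivalence of their horizontal homotopy fibres visibly carries $\delta_l$ to $\widetilde{\delta}_l$ composed with the subspace inclusion, so that nullity is inherited.
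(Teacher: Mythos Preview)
Your argument is correct and follows essentially the same route as the paper. Both proofs compare the evaluation fibrations for $M$ and $C_{\jmath}$ over $BSU(n)$ via $q^*$, use Lemma~\ref{lemma_connecting map null hmtpic} to see that the relevant connecting map out of $\map^*(Y,SU(n))$ is null, and read off the product splitting; the only cosmetic difference is that the paper works directly with the fibration $\G_{\vec 0,l}(C_{\jmath},SU(n))\to\G_{\vec 0,l}(M,SU(n))\xrightarrow{a}\map^*(Y,SU(n))$ and produces a right homotopy inverse to $a$ from a homotopy pullback, whereas you stay one level up at classifying spaces and invoke $\mathrm{hofib}(\text{null map }A\to B)\simeq A\times\Omega B$.
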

\begin{proof}
There is a diagram of evaluation fibration sequences
\[\xymatrix{
SU(n)\ar[r]\ar@{=}[d]	&\map^*_{\vec{0},l}(C_{\jmath},BSU(n))\ar[r]\ar[d]^-{q^*}	&\map_{\vec{0},l}(C_{\jmath},BSU(n))\ar[r]^-{ev}\ar[d]^-{q^*}	&BSU(n)\ar@{=}[d]\\
SU(n)\ar[r]^-{\partial}				&\map^*_{\vec{0},l}(M,BSU(n))\ar[r]		&\map_{\vec{0},l}(M,BSU(n))\ar[r]^-{ev}		&BSU(n)
}\]
where the two $q^*$'s are induced by $q$, and $\partial$ is a connecting map for the bottom fibration sequence. Expand the left square to get a diagram of homotopy fibration sequences
\[
\xymatrix{
	&\Omega\map^*_{\vec{0},l}(M,BSU(n))\ar@{=}[r]\ar[d]		&\Omega\map^*_{\vec{0},l}(M,BSU(n))\ar[d]^-{\jmath^*}\\
\G_{\vec{0},l}(C_{\jmath},SU(n))\ar[r]^-{q^*}\ar@{=}[d]	&\G_{\vec{0},l}(M,SU(n))\ar[r]^-{a}\ar[d]	&\map^*(Y,SU(n))\ar[d]^-{\delta_l}\\
\G_{\vec{0},l}(C_{\jmath},SU(n))\ar[r]			&SU(n)\ar[r]\ar[d]_-{\partial}							&\map^*_{\vec{0},l}(C_{\jmath},BSU(n))\ar[d]^-{q^*}\\
	&\map^*_{\vec{0},l}(M,BSU(n))\ar@{=}[r]		&\map^*_{\vec{0},l}(M,BSU(n))
}\]
where $\jmath^*$ is induced by $\jmath$, $a$ is an induced map, and the right column is given by Lemma~\ref{lemma_connecting map null hmtpic}. The middle right square of the diagram is a homotopy pullback, so since $\delta_l$ is null homotopic, $a$ has a homotopy right inverse. This gives the asserted homotopy equivalence.
\end{proof}

Of course, to make use of the previous lemmas we must construct the map $\jmath:Y\ra M$, and we turn now to this. We require that $\Sigma\jmath$ have a homotopy left inverse, which will happen if and only if $\Sigma M\simeq\Sigma Y\vee \Sigma C_{\jmath}$. Theorem~\ref{main thm_splitting of Sigma M} says that after suspension and localization away from 2 the homotopy type of $\Sigma M$ is given by one of the following three cases.
\begin{equation}
\label{equation_Sigma M compare}
\begin{array}{l c}
\Sigma M\simeq_{(\frac{1}{2})}\bigvee^{b}_{i=1}\Sigma\textbf{S}^2_i\vee\bigvee^b_{i=1}S^5\vee\bigvee^{2d}_{k=1}S^4\vee\bigvee^c_{j=1}(\Sigma\textbf{P}^3(p^{r_j}_j)\vee P^5(p^{r_j}_j))\vee S^7	&\text{Case A}\\[10pt]
\Sigma M\simeq_{(\frac{1}{2})}\bigvee^{b}_{i=2}\Sigma\textbf{S}^2_i\vee\bigvee^b_{i=1}S^5\vee\bigvee^{2d}_{k=1}S^4\vee\bigvee^c_{j=1}(\Sigma\textbf{P}^3(p^{r_j}_j)\vee P^5(p^{r_j}_j))\vee\Sigma C_{\alpha_1}	&\text{Case B}\\[10pt]
\Sigma M\simeq_{(\frac{1}{2})}\bigvee^b_{i=1}(\Sigma\textbf{S}^2_i\vee S^5)\vee\bigvee^{2d}_{k=1}S^4\vee\bigvee_{\substack{1\leq j\leq c \\ j\neq\bar{c}}}\Sigma\textbf{P}^3(p^{r_j}_j)\vee\bigvee^c_{j=1}P^5(p^{r_j}_j)\vee\Sigma C_{\imath_{\bar{c}}\circ\alpha_1}	&\text{Case C}
\end{array}
\end{equation}
Be careful in Cases B and C, because the inclusion of $\Sigma\textbf{S}^2_i$ into the wedge sum need not be homotopic to the suspension of the inclusion $\textbf{S}^2_i\hookrightarrow M$. This is because the above homotopy equivalence of $\Sigma M$ is given by the homotopy equivalence $\Sigma\varphi:\Sigma M\to\Sigma N$ produced in Lemma~\ref{lemma_Sigma M' hmtpy type P^1 non-trivial}. The same happens for $\Sigma\textbf{P}^3(p^{r_j}_j)$.

\begin{lemma}\label{lemma_existence of j}
There exist a 3-dimensional CW-complex $Y$ and a map $\jmath:Y\to M$ such that $\Sigma\jmath$ has a left homotopy inverse. More precisely, $Y$ can be chosen to be
\begin{itemize}
\item	$\bigvee^{b}_{i=1}\textbf{S}^2_i\vee\bigvee^c_{j=1}\textbf{P}^3(p^{r_j}_j)\vee\bigvee^{2d}_{k=1}S^3$ in Case A;\\
\item	$\bigvee^{b}_{i=2}\textbf{S}^2_i\vee\bigvee^c_{j=1}\textbf{P}^3(p^{r_j}_j)\vee\bigvee^{2d}_{k=1}S^3$ in Case B;\\
\item	$\bigvee^{b}_{i=1}\textbf{S}^2_i\vee\bigvee_{\substack{1\leq j\leq c \\ j\neq\bar{c}}}\textbf{P}^3(p^{r_j}_j)\vee\bigvee^{2d}_{k=1}S^3$ in Case C.
\end{itemize}
\end{lemma}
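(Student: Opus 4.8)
The plan is to build $Y$ and $\jmath\colon Y\to M$ by hand in each of the three cases, so that $\Sigma\jmath$ is a coordinate inclusion into the wedge decomposition of $\Sigma M$ given by~\eqref{equation_Sigma M compare}, hence automatically split by the pinch map. The building blocks are already available: the wedge summands $\textbf{S}^2_i\hookrightarrow X$ and $\textbf{P}^3(p^{r_j}_j)\hookrightarrow X$ coming from the cofibration sequences~\eqref{exact seq_M'5 construction}, together with the inclusions of the $2d$ spheres $S^3\hookrightarrow\bigvee^{2d}_{i=1}S^3$ appearing in the connected-sum decomposition $M_5\simeq M'_5\vee\bigvee^{2d}_{i=1}S^3$ from~\eqref{equation_M_5=M'_5 v S^3}. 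In every case $Y$ is a wedge of some of these low-dimensional pieces, so $Y$ is $3$-dimensional as required, and a map $\jmath\colon Y\to M$ is obtained by composing the relevant inclusions into $X$, then $X\hookrightarrow M'_5\hookrightarrow M_5\hookrightarrow M$ (for the sphere summands, $S^3\hookrightarrow\bigvee^{2d}_{i=1}S^3\hookrightarrow M_5\hookrightarrow M$).

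First I would treat Case~A. Here $Y=\bigvee^{b}_{i=1}\textbf{S}^2_i\vee\bigvee^c_{j=1}\textbf{P}^3(p^{r_j}_j)\vee\bigvee^{2d}_{k=1}S^3$ and $\jmath$ is the evident inclusion built as above. Suspending, $\Sigma\jmath\colon\Sigma Y\to\Sigma M$ lands in the wedge summand $\bigvee^{b}_{i=1}\Sigma\textbf{S}^2_i\vee\bigvee^c_{j=1}\Sigma\textbf{P}^3(p^{r_j}_j)\vee\bigvee^{2d}_{k=1}S^4$ of the splitting of $\Sigma M$ from Lemma~\ref{lemma_Sigma M' hmtpy type P^1 trivial} together with~\eqref{equation_Sigma M = Sigma M'v S^4}. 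To see this is a wedge-summand inclusion (and hence split), I would check that $\Sigma\jmath$ induces a monomorphism onto a direct summand in homology localized away from $2$; since $\Sigma Y$ and all relevant summands are suspensions of wedges of spheres and Moore spaces, the composite of $\Sigma\jmath$ with the pinch onto $\Sigma Y$-coordinates is a homology isomorphism, and by the Hilton--Milnor theorem applied to $[\Sigma Y,\Sigma Y]$ (using Lemma~\ref{lemma_Moore sp hmtpy gps} to kill the irrelevant summands) this forces that composite to be a homotopy equivalence. So the pinch map is the desired left homotopy inverse.

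Cases~B and~C are handled the same way but one must be careful, as the excerpt warns, that the splitting of $\Sigma M$ in these cases comes via the homotopy equivalence $\Sigma\varphi\colon\Sigma M\to\Sigma N$ of Lemma~\ref{lemma_Sigma M' hmtpy type P^1 non-trivial}, so the wedge summand $\Sigma\textbf{S}^2_1$ (resp. $\Sigma\textbf{P}^3_{\bar c}(3^{r_{\bar c}})$) has been ``used up'' to form $\Sigma C_{\alpha_1}$ (resp. $\Sigma C_{\imath_{\bar c}\circ\alpha_1}$) and is therefore omitted from $Y$. Thus in Case~B I take $Y=\bigvee^{b}_{i=2}\textbf{S}^2_i\vee\bigvee^c_{j=1}\textbf{P}^3(p^{r_j}_j)\vee\bigvee^{2d}_{k=1}S^3$, and in Case~C, $Y=\bigvee^{b}_{i=1}\textbf{S}^2_i\vee\bigvee_{1\leq j\leq c,\,j\neq\bar c}\textbf{P}^3(p^{r_j}_j)\vee\bigvee^{2d}_{k=1}S^3$; the map $\jmath$ is still the inclusion of these summands of $X$ composed into $M$. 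The key point is that the composite $\Sigma Y\xrightarrow{\Sigma\jmath}\Sigma M\xrightarrow{\Sigma\varphi}\Sigma N$ hits exactly the wedge summands of $\Sigma N$ that are themselves suspensions of the spaces making up $Y$, because $\varphi$ restricted to $X$ is $\tilde\varphi$ followed by the inclusion (Lemma~\ref{lemma_construction of hmtpy equiv}) and $\tilde\varphi$ is a homotopy equivalence of $X$; then the same Hilton--Milnor argument shows this composite is the inclusion of a wedge summand, so it, and hence $\Sigma\jmath$, has a left homotopy inverse.

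The main obstacle is bookkeeping rather than any genuine difficulty: one must verify that after composing with $\Sigma\varphi$ the map on the chosen summands really is a split injection and does not interact with the $C_{\alpha_1}$ or $C_{\imath_{\bar c}\circ\alpha_1}$ summand. This reduces, via the Hilton--Milnor decomposition of the group of self-maps of $\Sigma Y$ and Lemma~\ref{lemma_Moore sp hmtpy gps} (which trivializes the cross-terms and the low unstable homotopy of the sphere and Moore-space summands localized away from $2$), to the statement that a self-map of a wedge of spheres and Moore spaces inducing the identity in homology is a homotopy equivalence --- which is standard. The sphere summands $\bigvee^{2d}_{k=1}S^3$ require nothing beyond Lemma~\ref{lemma_nullity hurewicz}.
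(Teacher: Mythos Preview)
Your proposal is correct and follows essentially the same strategy as the paper: build $\jmath$ so that $\Sigma\jmath$ realizes $\Sigma Y$ as a wedge summand in the decomposition~\eqref{equation_Sigma M compare}, hence is split by the pinch map. The only packaging difference is in Cases~B and~C. There the paper defines $\jmath$ as the composite $Y\hookrightarrow X\hookrightarrow N\xrightarrow{\varphi'}M'\hookrightarrow M$, using the homotopy inverse $\varphi'$ of $\varphi$, so that after applying the splitting equivalence induced by $\Sigma\varphi$ one obtains the wedge-summand inclusion on the nose and no further verification is needed. You instead take $\jmath$ to be the natural inclusion $Y\hookrightarrow X\hookrightarrow M'\hookrightarrow M$ and then analyse $\Sigma\varphi\circ\Sigma\jmath$ via $\tilde\varphi$. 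Since in the construction of Lemma~\ref{lemma_Sigma M' hmtpy type P^1 non-trivial} each $h_i$, $h'_j$ (resp.\ $h''_j$) is the identity on every summand of $X$ other than $\textbf{S}^2_1$ (resp.\ $\textbf{P}^3(3^{r_{\bar c}})$), the map $\tilde\varphi$ restricts to the identity on your $Y$; so your $\jmath$ and the paper's $\jmath$ are in fact homotopic, and your Hilton--Milnor/Whitehead verification, while correct, is more elaborate than necessary.
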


\begin{proof}
In Case A let $\jmath$ be the inclusion $X\vee\bigvee^{2d}_{k=1}S^3\hookrightarrow M$. Then $\Sigma\jmath$ is the inclusion of $\bigvee^b_{i=1}\Sigma\textbf{S}^2_i\vee\bigvee^c_{j=1}\Sigma\textbf{P}^3(p^{r_j}_j)\vee\bigvee^{2d}_{k=1}S^4$ into the wedge sum in~(\ref{equation_Sigma M compare}) and hence has a left homotopy inverse.

In Case B we use notation of Lemma~\ref{lemma_Sigma M' hmtpy type P^1 non-trivial}. Let $\jmath'$ be the composite
\[\textstyle
\jmath':X\vee\bigvee^{2d}_{k=1}S^3\hookrightarrow N\vee\bigvee^{2d}_{k=1}S^3\overset{\varphi'\vee 1}{\longrightarrow}M'\vee\bigvee^{2d}_{k=1}S^3\hookrightarrow M,
\]
where $\varphi'$ is the homotopy inverse to $\varphi$, and let $\jmath$ be the composite
\[\textstyle
\jmath:\bigvee^{b}_{i=2}\textbf{S}^2_i\vee\bigvee^c_{j=1}\textbf{P}^3(p^{r_j}_j)\vee\bigvee^{2d}_{k=1}S^3\hookrightarrow X\vee\bigvee^{2d}_{k=1}S^3\overset{\jmath'}{\longrightarrow}M.
\]
Then $\Sigma\jmath$ is homotopic to the inclusion of $\bigvee^{b}_{i=2}\Sigma\textbf{S}^2_i\vee\bigvee^c_{j=1}\Sigma\textbf{P}^3(p^{r_j}_j)\vee\bigvee^{2d}_{k=1}S^4$ into the wedge sum in~(\ref{equation_Sigma M compare}) and hence has a left homotopy inverse. Similarly in Case C let $\jmath$ be the composite
\[\textstyle
\jmath:\bigvee^{b}_{i=1}\textbf{S}^2_i\vee\bigvee_{\substack{1\leq j\leq c \\ j\neq\bar{c}}}\textbf{P}^3(p^{r_j}_j)\vee\bigvee^{2d}_{k=1}S^3\hookrightarrow X\vee\bigvee^{2d}_{k=1}S^3\overset{\jmath'}{\longrightarrow}M.
\]
Again $\Sigma\jmath$ has a left homotopy inverse.
\end{proof}

\begin{thm}\label{thm_gauge gp general decomp}
Let $M$ be a simply connected, closed, orientable 6-manifold with homology as in~(\ref{table_original M hmlgy}), and let $P$ be a principal $SU(n)$-bundle over $M$ with $c_2(P)=(0,\ldots,0)$ and $c_3(P)=l$. Let $Y$ and $\jmath:Y\to M$ be defined as in Lemma~\ref{lemma_existence of j}, and let $C_{\jmath}$ be the mapping cone of $\jmath$. Suppose $\mathcal{P}^1:H^2(M;\Z/3)\to H^6(M;\Z/3)$ is trivial (Case A). Then
\begin{eqnarray*}
\G_{\vec{0},l}(M,SU(n))
&\simeq_{(\frac{1}{2})}&\textstyle\G_{\vec{0},l}(C_{\jmath},SU(n))\times\prod^{b}_{i=1}\Omega^2SU(n)\times\prod^{c}_{j=1}\Omega^3\{p^{r_j}_j\}SU(n)\times\\%[8pt]
&&\textstyle\prod^{2d}_{k=1}\Omega^3SU(n).
\end{eqnarray*}
Suppose $\mathcal{P}^1:H^2(M;\Z/3)\to H^6(M;\Z/3)$ is non-trivial. Then in Case B we have
\begin{eqnarray*}
\G_{\vec{0},l}(M,SU(n))
&\simeq_{(\frac{1}{2})}&\textstyle\G_{\vec{0},l}(C_{\jmath},SU(n))\times\prod^{b}_{i=2}\Omega^2SU(n)\times\prod^{c}_{j=1}\Omega^3\{p^{r_j}_j\}SU(n)\times\\
&&\textstyle\prod^{2d}_{k=1}\Omega^3SU(n),
\end{eqnarray*}
whilst in Case C we have
\begin{eqnarray*}
\G_{\vec{0},l}(M,SU(n))
&\simeq_{(\frac{1}{2})}&\textstyle\G_{\vec{0},l}(C_{\jmath},SU(n))\times\prod^{b}_{i=1}\Omega^2SU(n)\times\prod_{\substack{1\leq j\leq c \\ j\neq\bar{c}}}\Omega^3\{p^{r_j}_j\}SU(n)\times\\
&&\textstyle\prod^{2d}_{k=1}\Omega^3SU(n).
\end{eqnarray*}
\end{thm}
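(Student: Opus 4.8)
The plan is to peel off the factor $\map^*(Y,SU(n))$ from $\G_{\vec{0},l}(M,SU(n))$ using Lemmas~\ref{lemma_coaction of M/Y}--\ref{lemma_gauge decomp for dim <4}, and then to decompose that mapping space by elementary means. Throughout I would work after localization away from $2$. In each of Cases A, B, and C, take the $3$-dimensional complex $Y$ and the map $\jmath\colon Y\to M$ supplied by Lemma~\ref{lemma_existence of j}; by construction $\Sigma\jmath$ has a left homotopy inverse, which is precisely the standing hypothesis of Lemma~\ref{lemma_connecting map null hmtpic}. Lemma~\ref{lemma_gauge decomp for dim <4} then yields the homotopy equivalence
\[
\G_{\vec{0},l}(M,SU(n))\simeq_{(\frac{1}{2})}\G_{\vec{0},l}(C_{\jmath},SU(n))\times\map^*(Y,SU(n)),
\]
so it remains only to identify $\map^*(Y,SU(n))$.

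For the second step, note that in every case $Y$ is a wedge of copies of $S^2$, $S^3$, and Moore spaces $P^3(p^{r_j}_j)$. Since a pointed map out of a wedge is the same thing as a tuple of pointed maps out of the summands, there is a natural homeomorphism $\map^*(A\vee B,SU(n))\cong\map^*(A,SU(n))\times\map^*(B,SU(n))$; iterating over the summands of $Y$ gives $\map^*(Y,SU(n))\cong\prod_Z\map^*(Z,SU(n))$, the product taken over the wedge summands $Z$ of $Y$. I would then identify each factor using the standard adjunctions $\map^*(S^m,SU(n))=\Omega^m SU(n)$ and, by the definition of the notation, $\map^*(P^3(p^{r_j}_j),SU(n))=\Omega^3\{p^{r_j}_j\}SU(n)$. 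Substituting the explicit $Y$ from Lemma~\ref{lemma_existence of j} then produces the three displayed formulas; for instance in Case A, where $Y=\bigvee^b_{i=1}\textbf{S}^2_i\vee\bigvee^c_{j=1}\textbf{P}^3(p^{r_j}_j)\vee\bigvee^{2d}_{k=1}S^3$, one gets
\[
\map^*(Y,SU(n))\simeq_{(\frac{1}{2})}\prod^b_{i=1}\Omega^2 SU(n)\times\prod^c_{j=1}\Omega^3\{p^{r_j}_j\}SU(n)\times\prod^{2d}_{k=1}\Omega^3 SU(n),
\]
while in Case B the first product ranges over $2\le i\le b$ and in Case C the Moore-space product omits the index $\bar{c}$. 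Combining with the first display finishes the argument.

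I do not expect a genuine obstacle here: all of the substantive work has already been done --- in Theorem~\ref{main thm_splitting of Sigma M} (the splitting of $\Sigma M$), in Lemma~\ref{lemma_existence of j} (the construction of $\jmath$ with $\Sigma\jmath$ split), and in Lemmas~\ref{lemma_coaction of M/Y}--\ref{lemma_gauge decomp for dim <4} (the gauge-theoretic reduction). The only points that require care are bookkeeping ones: carrying the localization away from $2$ consistently through the mapping-space identifications so that the resulting equivalences are genuinely $\simeq_{(\frac{1}{2})}$, and matching the indexing of the wedge summands of $Y$ in each case against the corresponding case of Theorem~\ref{main thm_splitting of Sigma M}.
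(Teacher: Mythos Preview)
Your proposal is correct and follows exactly the paper's approach: the paper's proof simply states that the claims follow immediately from Lemmas~\ref{lemma_gauge decomp for dim <4} and~\ref{lemma_existence of j}, and your write-up just makes explicit the routine step of decomposing $\map^*(Y,SU(n))$ over the wedge summands of $Y$.
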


%Let $X$ be a $S^2_i$ or $P^3_j(p^{r_j}_j)$ and let $\jmath':X\to M$ be the inclusion. Then $\Sigma\jmath'$ has a homotopy left inverse if and only if the 7-cell of $\Sigma M$ does not attach to $\Sigma X$. After localization away from 2 and suspension, Theorem~\ref{main thm_splitting of Sigma M} says that the maps $u$ and $v$ vanish and $\Sigma M_5$ becomes a wedge of spheres and Moore spaces, but the 7-cell may attach to the suspension of some $S^2_i$'s and $P^3_j(p^{r_j}_j)$ depending on whether $\mathcal{P}^1$ acts trivially on $H^*(M;\Z/3)$. So we separate into two cases to find the good subcomplex $Y$.

\begin{proof}
The claims follow immediately from Lemmas~\ref{lemma_gauge decomp for dim <4} and~\ref{lemma_existence of j}.
\end{proof}

We want to further decompose the term $\G_{\vec{0},l}(C_{\jmath},SU(n))$ appearing in Theorem~\ref{thm_gauge gp general decomp}, but in general this is difficult. If the torsion group $T$ in~(\ref{table_original M hmlgy}) is trivial, or is a cyclic group, then we can show that $C_{\jmath}$ is a wedge of a CW-complex and some 4-spheres. In this case we proceed via the following two lemmas.

\begin{lemma}\label{lemma_mapping cone simplified}
Let $A$ be a simply connected space and let $B=A\cup\bigcup^{\ell}_{i=1}e^{n}$ be formed by attaching $\ell$ many $n$-cells to $A$ for $n,{\ell}\geq2$. If $\pi_{n-1}(A)$ is a cyclic group, then $B\simeq B'\vee\bigvee^{\ell-1}_{i=1}S^n$ where $B'=A\cup e^n$ is formed by attaching one~$n$-cell to $A$.
\end{lemma}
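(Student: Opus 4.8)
The plan is to present $B$ as the mapping cone of a map out of a wedge of $(n-1)$-spheres and then apply a unimodular change of basis on that wedge in order to concentrate the attaching data in a single sphere. I would first dispose of the case $n=2$ separately: since $A$ is simply connected, $\pi_1(A)=0$, so every attaching map $S^1\to A$ is null homotopic, whence $B\simeq A\vee\bigvee^{\ell}_{i=1}S^2$ and the conclusion holds with $B'=A\vee S^2$. From now on assume $n\geq 3$.

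Write $V=\bigvee^{\ell}_{i=1}S^{n-1}$, so that $B=C_f$ is the mapping cone of some $f\colon V\to A$, and let $\alpha_i=f\circ\iota_i\in\pi_{n-1}(A)$ be the class carried by the $i$-th summand. Since $\pi_{n-1}(A)$ is cyclic, the subgroup generated by the $\alpha_i$ is cyclic, say generated by $\beta$, and the Euclidean algorithm provides a matrix $U\in GL_{\ell}(\mathbb{Z})$ whose right action on the tuple $(\alpha_1,\dots,\alpha_{\ell})$ yields $(\beta,0,\dots,0)$. Because $n-1\geq 2$, the inclusion $V\hookrightarrow\prod^{\ell}S^{n-1}$ is $(2n-3)$-connected and $2n-3>n-1$, so $\pi_{n-1}(V)\cong\mathbb{Z}^{\ell}$; consequently $[V,V]$ is the monoid (under composition) of $\ell\times\ell$ integer matrices, and $U$ is realized by a self-homotopy-equivalence $\Theta\colon V\to V$, built concretely from pinch/fold maps and degree $\pm1$ maps on individual summands. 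By the computation of components just made, the composite $f\circ\Theta$ carries the class $\beta$ on the first summand of $V$ and the trivial class on each of the other $\ell-1$ summands.

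Since $\Theta$ is a homotopy equivalence, the induced map of mapping cones gives $B=C_f\simeq C_{f\circ\Theta}$. Now $f\circ\Theta$ restricted to the last $\ell-1$ summands of $V$ is null homotopic, so the mapping cone splits:
\[
C_{f\circ\Theta}\simeq\bigl(A\cup_{\beta}e^n\bigr)\vee\bigvee^{\ell-1}_{i=1}S^n.
\]
Taking $B'=A\cup_{\beta}e^n$, a space obtained by attaching a single $n$-cell to $A$, completes the argument. The steps requiring genuine care are the identification $\pi_{n-1}(V)\cong\mathbb{Z}^{\ell}$ and the realization of $GL_{\ell}(\mathbb{Z})$ by self-equivalences of $V$ — which is precisely where the hypothesis $n\geq 3$ (equivalently $n-1\geq 2$) and the freeness of this homotopy group enter — together with the bookkeeping verifying that precomposition with $\Theta$ replaces $(\alpha_1,\dots,\alpha_{\ell})$ by $(\beta,0,\dots,0)$; everything else is formal.
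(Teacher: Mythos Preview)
Your argument is correct and follows essentially the same route as the paper's proof: both realize $B$ as the mapping cone of a map $\bigvee^{\ell}S^{n-1}\to A$, use an element of $GL_{\ell}(\mathbb{Z})$ (realized as a self-equivalence of the wedge) to concentrate the attaching data in a single summand, and then read off the splitting from the resulting cofibre. Your explicit separation of the case $n=2$ is a nice touch; the paper handles it implicitly by noting the lemma is trivial when $\pi_{n-1}(A)=0$.
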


\begin{proof}
The lemma is obvious if $\pi_{n-1}(B)$ is the trivial group.

Suppose $\pi_{n-1}(A)\cong\Z/m$, where $m$ is zero or an integer greater than 1. Here $\Z/m$ means~$\Z$ if $m=0$. Label the $n$-cells in $B$ by $e^n_1,\ldots,e^n_{\ell}$ and their boundaries by $S^{n-1}_1,\ldots,S^{n-1}_{\ell}$. Let~$g:\bigvee^{\ell}_{i=1}S^{n-1}_i\to A$ be the attaching map of the $n$-cells, let $g_i:S^{n-1}_i\to A$ be the restriction of $g$ to $S^{n-1}_{i}$, and let $\epsilon:S^{n-1}\to A$ be a map representing a generator of $\pi_{n-1}(A)$. Then $g_i\simeq a_i\epsilon$ for some $a_i\in\Z/m$. Define a homomorphism
\[
\Theta:[\textstyle{\bigvee^{\ell}_{i=1}}S^{n-1}_i,A]\to(\Z/m)^{\ell} 
\]
by $\Theta(h)=(b_1,\ldots,b_{\ell})$ for $h\in[\bigvee^{\ell}_{i=1}S^{n-1}_i,A]$, where the restriction of $h$ to $S^{n-1}_i$ is homotopic to $b_i\epsilon$. Then $\Theta$ is an isomorphism satisfying $\Theta(g)=(a_1,\ldots,a_{\ell})$.

Now, there is an invertible matrix $P\in\text{GL}_{\ell}(\Z)$ with $(a_1,\ldots,a_{\ell})\cdot P\equiv(a',0,\ldots,0)$, where
\[
a'=\begin{cases}
\text{gcd}\{a_i|1\leq i\leq {\ell}\}	&m=0\\[5pt]
\text{gcd}\{m,\tilde{a}_i|1\leq i\leq {\ell}\}	&m\geq2
\end{cases}
\]
and $\tilde{a}_i$ is an integer such that $\tilde{a}_i\equiv a_i\pmod{m}$. As there are group isomorphisms
\[
[\textstyle\bigvee^{\ell}_{i=1}S^{n-1}_i,\bigvee^{\ell}_{j=1}S^{n-1}_j]
\cong\bigoplus^{\ell}_{i,j=1}[S^{n-1}_i,S^{n-1}_j]
\cong\text{Mat}_{\ell}(\Z),
\]
there exists a homotopy equivalence $\Phi:\bigvee^{\ell}_{i=1}S^{n-1}_i\to\bigvee^{\ell}_{i=1}S^{n-1}_i$ such that
\[
\Theta(g\circ\Phi)=(a_1,\ldots,a_{\ell})\cdot P=(a',0,\ldots,0).
\]
It follows that $g\circ\Phi$ is homotopic to the composite
$
\bigvee^{\ell}_{i=1}S^{n-1}_i\overset{\text{pinch}}{\to}S^{n-1}_1\overset{a'\epsilon}{\to}A$.
On the other hand, consider the diagram of cofibration sequences
\[
\xymatrix{
\bigvee^{\ell}_{i=1}S^{n-1}\ar[r]^-{g\circ\Phi}\ar[d]^-{\Phi}	&A\ar[r]\ar@{=}[d]	&C_{g\circ\Phi}\ar[d]^-{\tilde{\Phi}}\\
\bigvee^{\ell}_{i=1}S^{n-1}\ar[r]^-{g}	&A\ar[r]	&B
}
\]
where $C_{g\circ\Phi}$ is the mapping cone of $g\circ\Phi$ and $\tilde\Phi$ is an induced map. The Five Lemma and the Whitehead Theorem imply that $\tilde{\Phi}$ is a homotopy equivalence, so $B\simeq C_{g\circ\Phi}\simeq\bigvee^{\ell}_{i=2}S^n\vee B'$, where $B'$ is the mapping cone of $a'\epsilon$.
\end{proof}
Recall that $T=\bigoplus^c_{j=1}\Z/p^{r_j}_j$ denotes the torsion group in~(\ref{table_original M hmlgy}). 
\begin{lemma}\label{lemma_decompose M/Y}
Suppose $T=0$. If $\mathcal{P}^1$ acts trivially on $H^*(M;\Z/3)$, then $C_{\jmath}\simeq_{(\frac{1}{2})}\bigvee^b_{i=1}S^4\vee S^6$. Otherwise $C_{\jmath}\simeq\bigvee^{b-1}_{i=1}S^4\vee C'$ where $C'$ is a CW-complex with one 2-cell, one 4-cell and one 6-cell.

Suppose $T\cong\Z/m$ and $\mathcal{P}^1$ acts trivially on $H^*(M;\Z/3)$. Then $C_{\jmath}\simeq\bigvee^{b-1}_{i=1}S^4\vee C''$ where $C''=P^4(m)\cup e^4\cup e^6$ has one 6-cell and 5-skeleton $P^4(m)\cup e^4$.
\end{lemma}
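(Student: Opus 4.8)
The plan is to present $C_{\jmath}$, in each of the three cases, as a CW complex of a very restricted shape --- a ``bottom complex'' $A$ together with $b$ attached $4$-cells and a single $6$-cell --- and then to strip off $b-1$ copies of $S^4$ using Lemma~\ref{lemma_mapping cone simplified} before disposing of the top cell after localization away from $2$. Since $Y$ is $3$-dimensional, $\jmath$ factors up to homotopy through the inclusion of $M_5\simeq M'_5\vee\bigvee^{2d}_{k=1}S^3$, say $\jmath\simeq (M_5\hookrightarrow M)\circ k$; applying the octahedral axiom to $Y\xrightarrow{k}M_5\hookrightarrow M$ produces a cofibration sequence $C_k\to C_{\jmath}\to M/M_5\simeq S^6$, so that $C_{\jmath}\simeq C_k\cup e^6$ with the top cell attached by a map in $\pi_5(C_k)$. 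When $\mathcal{P}^1$ is trivial one has $Y=X\vee\bigvee^{2d}_{k=1}S^3$ and $k$ is just the inclusion of this subcomplex, so $C_k\simeq M'_5/X$. When $\mathcal{P}^1$ is non-trivial --- necessarily Case~B of~(\ref{equation_Sigma M compare}), since $T=0$ rules out Case~C --- the restriction of $k$ to $\bigvee^{b}_{i=2}\textbf{S}^2_i$ is the subcomplex inclusion $\bigvee^{b}_{i=2}\textbf{S}^2_i\hookrightarrow X$ followed by a self-homotopy-equivalence $\tilde\varphi^{-1}$ of $X=\bigvee^{b}_{i=1}\textbf{S}^2_i$ coming from $\varphi:M'\to N$ of Lemma~\ref{lemma_Sigma M' hmtpy type P^1 non-trivial}; since pre- and post-composing a cofibration with homotopy equivalences does not change the cofibre, a second application of the octahedral axiom still gives $C_k\simeq \textbf{S}^2_1\cup\bigcup^{b}_{i=1}e^4$, the bottom $S^2$ being $X/\bigvee^{b}_{i=2}\textbf{S}^2_i$.

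To identify the bottom complex I would collapse cells in the two cofibration sequences~(\ref{exact seq_M'5 construction}). Collapsing $X$ inside $C_g$ identifies $C_g/X$ with $\Sigma\big(\bigvee^{c}_{j=1}P^3(p^{r_j}_j)\big)=\bigvee^{c}_{j=1}P^4(p^{r_j}_j)$, because the cofibre of a cofibration is the suspension of its fibre; and, since a wedge of mod-$p$ Moore spaces realizes the Moore space of the direct sum of the corresponding cyclic groups, this is $P^4(m)$ when $T\cong\Z/m$ and a point when $T=0$. The $b$ top $4$-cells of $M'_5$ are then attached along a map $\bigvee^{b}_{i=1}S^3\to C_g/X$ which induces zero on $H_3$ (it is the homologically trivial $g'$ followed by the pinch map), hence is null homotopic by Lemma~\ref{lemma_nullity hurewicz} as the target is $2$-connected; thus $M'_5/X\simeq P^4(m)\vee\bigvee^{b}_{i=1}S^4$ when $T\cong\Z/m$ and $\simeq\bigvee^{b}_{i=1}S^4$ when $T=0$. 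In Case~B the same collapse leaves the $4$-cells attached to $\textbf{S}^2_1$ by multiples of the Hopf map (this is precisely where the non-triviality of $\mathcal{P}^1$ enters, via the cup-square form on $H^2(M')$), and no Hurewicz argument is available since $\pi_3(S^2)\cong\Z$.

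In every case the $5$-skeleton of $C_{\jmath}$ now has the form $A\cup\bigcup^{b}_{i=1}e^4$, with $A$ a point, $S^2$, or $P^4(m)$, and in each case $\pi_3(A)$ is cyclic --- being $0$, $\Z$, or $\Z/m$ respectively (the last by the Hurewicz theorem). Lemma~\ref{lemma_mapping cone simplified} with $n=4$ therefore yields $A\cup\bigcup^{b}_{i=1}e^4\simeq B'\vee\bigvee^{b-1}_{i=1}S^4$ with $B'=A\cup e^4$, and attaching the $6$-cell gives $C_{\jmath}\simeq\big(B'\vee\bigvee^{b-1}_{i=1}S^4\big)\cup_{\psi}e^6$ for some $\psi\in\pi_5\big(B'\vee\bigvee^{b-1}_{i=1}S^4\big)$. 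The part of $\psi$ landing in the split-off wedge of $4$-spheres is a sum of elements of $\pi_5(S^4)\cong\Z/2$, which vanish after localization away from $2$, and of Whitehead products pairing the bottom class of $B'$ with a split-off sphere; the latter are seen to be absorbable because the collapse in the first step can be arranged (using Poincar\'{e} duality on $M$) to destroy exactly the cup-product linkages between the collapsed $2$-cells and the complementary $4$-cells. Hence $\psi$ can be pushed onto $B'$, giving $C_{\jmath}\simeq_{(\frac{1}{2})}(B'\cup e^6)\vee\bigvee^{b-1}_{i=1}S^4$: this reads $\bigvee^{b}_{i=1}S^4\vee S^6$ when $T=0$ with $\mathcal{P}^1$ trivial, $C'\vee\bigvee^{b-1}_{i=1}S^4$ with $C'=S^2\cup e^4\cup e^6$ when $T=0$ with $\mathcal{P}^1$ non-trivial, and $C''\vee\bigvee^{b-1}_{i=1}S^4$ with $C''=P^4(m)\cup e^4\cup e^6$ (whose $5$-skeleton is $P^4(m)\cup e^4$) when $T\cong\Z/m$.

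The $\mathcal{P}^1$-trivial cases are essentially formal once $M'_5/X$ has been computed, so the main obstacle is the non-trivial case: there $\jmath$ is not a subcomplex inclusion, which forces one to route it through the complex $N$ of Lemma~\ref{lemma_Sigma M' hmtpy type P^1 non-trivial} and to check that the homotopy automorphism $\tilde\varphi$ does not disturb the cofibre computation; moreover Lemma~\ref{lemma_mapping cone simplified} is genuinely needed (a Hurewicz argument cannot detect the $\pi_3(S^2)$-valued attaching maps of the $4$-cells), and the delicate point is to verify that, after the change of basis that Lemma~\ref{lemma_mapping cone simplified} performs on the $4$-cells, the attaching map of the top cell can still be taken to miss the split-off $4$-spheres.
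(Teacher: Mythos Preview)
Your overall strategy coincides with the paper's: present $C_{\jmath}$ as a $5$-skeleton of the form $A\cup\bigcup^{b}_{i=1}e^4$ with $A$ a point, $S^2$, or $P^4(m)$; invoke Lemma~\ref{lemma_mapping cone simplified} (using that $\pi_3(A)$ is cyclic) to split off $b-1$ copies of $S^4$; and then argue that the attaching map of the $6$-cell can be pushed onto the piece $B'=A\cup e^4$. Your identification of the $5$-skeleton and your observation that the components landing in $\pi_5(S^4)\cong\Z/2$ die after inverting $2$ are both exactly what the paper does.

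The gap is in your treatment of the remaining cross-terms of the attaching map $\psi\in\pi_5(B'\vee\bigvee^{b-1}S^4)$. Saying that the Whitehead products pairing the bottom class of $B'$ with a split-off $S^4$ are ``absorbable because the collapse in the first step can be arranged (using Poincar\'e duality on $M$) to destroy the cup-product linkages'' is not a proof: Lemma~\ref{lemma_mapping cone simplified} performs an uncontrolled change of basis on the $4$-cells, and you give no mechanism for tracking any duality pairing through that change. The paper's replacement for this step is a pure connectivity argument: use the Ganea-type fibration $\Omega B'*\Omega(\bigvee^{b-1}S^4)\to B'\vee\bigvee^{b-1}S^4\to B'\times\bigvee^{b-1}S^4$ to decompose
\[
\pi_5\bigl(B'\vee\textstyle\bigvee^{b-1}S^4\bigr)\;\cong_{(\frac{1}{2})}\;\pi_5(B')\oplus\pi_4\bigl(\Omega B'*\Omega(\textstyle\bigvee^{b-1}S^4)\bigr),
\]
and then observe that the join $\Omega B'*\Omega(\bigvee^{b-1}S^4)$ is $4$-connected (since $\Omega(\bigvee S^4)$ is $2$-connected), so the last summand vanishes. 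This disposes of all cross-terms simultaneously, with no reference to the geometry of $M$. You should replace the Poincar\'e duality hand-wave with this argument.
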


\begin{proof}
Assume $T=0$ and $\mathcal{P}^1$ acts trivially on $H^*(M;\Z/3)$. Then we realize case A, with $Y$ being the 3-skeleton of $M$. This gives $C_{\jmath} \simeq_{(\frac{1}{2}) }\left(\bigvee^b_{i=1}S^4\right)\cup e^6$. Since $\pi_5(\bigvee^b_{i=1}S^4)\cong\bigoplus^b_{i=1}\pi_5(S^4)\cong_{(\frac{1}{2})}0$, we have $C_{\jmath}\simeq_{(\frac{1}{2})}\bigvee^b_{i=1}S^4\vee S^6$.

Now assume $\mathcal{P}^1$ acts non-trivially on $H^*(M;\Z/3)$. Then $Y=\bigvee^{b}_{i=2}S^2_i$, and the 5-skeleton $(C_{\jmath})_5$ of $C_\jmath$ is formed by attaching $b$ 4-cells to $S^2$. By Lemma~\ref{lemma_mapping cone simplified} $(C_{\jmath})_5\simeq\tilde{C}\vee\bigvee^{b-1}_{i=1}S^4$ where $\tilde{C}$ is a CW-complex with one 2-cell and one 4-cell. The 6-cell of $C_\jmath$ is attached by a map $\hat{f}:S^5\to(C_{\jmath})_5$, whose homotopy class belongs to
\[\begin{array}{r l l}
\pi_5(\tilde{C}\vee\bigvee^{b-1}_{i=1}S^4)
&\cong&\pi_4(\Omega(\tilde{C}\vee\bigvee^{b-1}_{i=1}S^4))\\[8pt]
&\cong&\pi_4(\Omega\tilde{C})\oplus\pi_4(\Omega(\bigvee^{b-1}_{i=1}S^4))\oplus\pi_4(\Omega\tilde{C}*\Omega(\bigvee^{b-1}_{i=1}S^4))\\[8pt]
&\cong&\pi_4(\Omega\tilde{C})\oplus\bigoplus^{b-1}_{i=1}\pi_4(\Omega S^4)\oplus\pi_4(\Omega\tilde{C}*\Omega(\bigvee^{b-1}_{i=1}S^4))\\[8pt]
&\cong_{(\frac{1}{2})}&\pi_4(\Omega\tilde{C})\oplus\pi_4(\Omega\tilde{C}*\Omega(\bigvee^{b-1}_{i=1}S^4)).
\end{array}
\]
Since $\Omega\tilde{C}*\Omega(\bigvee^{b-1}_{i=1}S^4)$ is 4-connected, $\pi_4(\Omega\tilde{C}*\Omega(\bigvee^{b-1}_{i=1}S^4))$ is trivial. Thus the attaching map~$\hat{f}$ factors through some map $g:S^4\to\tilde{C}$, so $C_{\jmath}\simeq\bigvee^{b-1}_{i=1}S^4\vee C'$ where $C'$ is the mapping cone of $g$ and is a CW-complex with one 2-cell, one 4-cell and one 6-cell.

Lastly, suppose $T\cong\Z/m$ and $\mathcal{P}^1$ acts trivially on $H^*(M;\Z/3)$. Then $Y=\bigvee^b_{i=1}S^2\vee P^3(m)$ and $(C_{\jmath})_5$ is formed by attaching $b$ 4-cells to $P^4(m)$. We can follow the above argument to show that $C_{\jmath}\simeq\bigvee^{b-1}_{i=1}S^4\vee C''$ where $C''=P^4(m)\cup e^4\cup e^6$.
\end{proof}

The last tool we need is a way to understand gauge groups of principal bundles over a wedge sum $S^4\vee A$. Let $G$ be a simply connected simple compact Lie group. Then we have
\[
[S^4\vee A,BG]\cong[S^4,BG]\times[A,BG]\cong\Z\times[A,BG].
\]
A principal $G$-bundle over $A$ is classified by an element $\gamma\in[A,BG]$, and its gauge group is denoted by $\G_{\gamma}(A,G)$. Similarly, a principal $G$-bundle over $S^4\vee A$ is classified by a pair~$(l,\gamma')\in\Z\times[A,BG]$, and its gauge group is denoted by $\G_{l,\gamma'}(S^4\vee A,G)$.

\begin{lemma}\label{lemma_gauge gp over wedge sum}
Let $G$ be a simply connected simple compact Lie group and let $A$ be a space. For $\gamma\in[A,BG]$, there is a homotopy equivalence $\G_{0,\gamma}(S^4\vee A,G)\simeq\G_{\gamma}(A,G)\times\Omega^4G$.
\end{lemma}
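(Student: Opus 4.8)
The plan is to identify the bundle appearing in the statement concretely and then read off its gauge group from the adjoint bundle of groups. Write $r\colon S^4\vee A\to A$ for the map collapsing $S^4$ to the wedge point and $i\colon A\hookrightarrow S^4\vee A$ for the inclusion, so $r\circ i=\mathrm{id}_A$. Under the splitting $[S^4\vee A,BG]\cong[S^4,BG]\times[A,BG]$ the class $(0,\gamma)$ is represented by $\gamma\circ r$, so the principal $G$-bundle $P$ classified by $(0,\gamma)$ is $P\cong r^{*}P_{\gamma}$, where $P_{\gamma}\to A$ is classified by $\gamma$. First I would record two elementary facts: since $r|_{S^4}$ is the constant map at the wedge point, $P|_{S^4}$ is the trivial bundle $S^4\times G$; and since forming the adjoint bundle of groups $\mathrm{Ad}(-)=(-)\times_G G$ commutes with pullback, $\mathrm{Ad}(P)|_{S^4}$ is the trivial bundle of groups $S^4\times G$, while $\mathrm{Ad}(P)|_{A}=\mathrm{Ad}(P_{\gamma})$.

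Next I would use the standard identification $\G_{\gamma'}(Z,G)\cong\Gamma(Z,\mathrm{Ad}(P_{\gamma'}))$ of a gauge group with the group of sections of the adjoint bundle of groups. A section of a bundle over a wedge $Z_1\vee Z_2$ is exactly a pair of sections, one over each $Z_i$, which agree in the fibre over the wedge point; applying this to $\mathrm{Ad}(r^{*}P_{\gamma})$ over $S^4\vee A$ and using the two restrictions above yields a fibre product
\[
\G_{0,\gamma}(S^4\vee A,G)\;\cong\;\G_{\gamma}(A,G)\;\times_{G}\;\map(S^4,G),
\]
where $\Gamma(S^4,S^4\times G)=\map(S^4,G)$, and both maps to $G$ are evaluation at the wedge point (for the $A$-side using the basepoint of $P_\gamma$ over the basepoint of $A$ to trivialise the fibre of $\mathrm{Ad}(P_\gamma)$ there, for the $S^4$-side using the canonical trivialisation of the pulled-back bundle). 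Alternatively, one can run the same argument homotopy-theoretically using $B\G_{\gamma'}(Z,G)\simeq\map_{\gamma'}(Z,BG)$ from~\cite{AB83}, the homeomorphism $\map(S^4\vee A,BG)\cong\map(S^4,BG)\times_{BG}\map(A,BG)$, the identification $\G_{\gamma'}(Z,G)\simeq\Omega_{\gamma'}\map_{\gamma'}(Z,BG)$, and the fact that $\Omega$ preserves homotopy pullbacks.

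Finally I would observe that the evaluation map for the trivial bundle over $S^4$ splits off its basepoint factor: the group structure on $G$ gives a homeomorphism $\map(S^4,G)\xrightarrow{\ \cong\ }\map^{*}(S^4,G)\times G=\Omega^4G\times G$, $\phi\mapsto(\,x\mapsto\phi(x)\phi(*)^{-1},\ \phi(*)\,)$, under which evaluation at the basepoint becomes the projection onto $G$. Pulling a projection $\Omega^4G\times G\to G$ back along any map is the same as taking a product with $\Omega^4G$, so the fibre product above becomes $\G_{\gamma}(A,G)\times\Omega^4G$, which is the asserted homotopy equivalence. I do not expect a genuine obstacle here: the lemma amounts to the fact that $P$ is pulled back from $A$ along the retraction $r$, together with the elementary splitting $\map(S^4,G)\simeq\Omega^4G\times G$. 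The only point requiring care is the bookkeeping that makes the fibre product legitimate — namely that the trivialisation of $P|_{S^4}$, the identification of the fibre of $\mathrm{Ad}(P)$ over the wedge point with $G$, and the chosen representative $\gamma\circ r$ of the class $(0,\gamma)$ are all induced from the single basepoint of $P_\gamma$, so that the two evaluation maps into $G$ are compatibly normalised.
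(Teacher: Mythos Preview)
Your argument is correct, and it takes a genuinely different route from the paper. The paper works entirely on the level of classifying spaces: it sets up the evaluation fibrations $G\to\map^*(-,BG)\to\map(-,BG)\to BG$ for $A$ and for $S^4\vee A$, connects them via $\tilde q^*$ (your $r^*$), and then expands the resulting square into a large ladder of homotopy fibration sequences. The splitting comes from observing that a certain connecting map $s:\Omega^4G\to\map^*_\gamma(A,BG)$ is null homotopic, precisely because $\tilde\jmath^*\circ\tilde q^*\simeq id$ (your $r\circ i=id_A$), so the homotopy pullback in the middle splits.

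By contrast, you work directly with the adjoint bundle and the section-space description of the gauge group. Your key step---that sections over a wedge form a strict fibre product over the fibre at the wedge point---replaces the paper's diagram chase, and the elementary splitting $\map(S^4,G)\cong\Omega^4G\times G$ does the rest. This is both shorter and yields a homeomorphism rather than merely a homotopy equivalence. The paper's approach has the advantage of staying within the homotopy-theoretic framework used throughout Section~5.2 (evaluation fibrations, $B\G\simeq\map(-,BG)$, connecting maps), so it dovetails with the proof of Lemma~\ref{lemma_gauge decomp for dim <4}; your approach is more self-contained and arguably more transparent for this particular statement. You also note the alternative via $\Omega$ of the pullback $\map(S^4\vee A,BG)\cong\map(S^4,BG)\times_{BG}\map(A,BG)$, which is essentially a compressed version of what the paper does.
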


\begin{proof}
Let $\tilde{\jmath}:A\to S^4\vee A$ be the inclusion and $\tilde{q}:S^4\vee A\to A$ the pinch map. Then $\tilde{\jmath}^*(\ell,\gamma)=\gamma$ and $\tilde{q}^*(\gamma)=(0,\gamma)$. Thus there is a diagram of evaluation fibrations
\[\xymatrix{
G\ar[r]\ar@{=}[d]	&\map^*_{\gamma}(A,BG)\ar[r]\ar[d]^-{\tilde{q}^*}	&\map_{\gamma}(A,BG)\ar[r]^-{ev}\ar[d]^-{\tilde{q}^*}	&BG\ar@{=}[d]\\
G\ar[r]				&\map^*_{0,\gamma}(S^4\vee A,BG)\ar[r]		&\map_{0,\gamma}(S^4\vee A,BG)\ar[r]^-{ev}		&BG
}\]
where the two $\tilde{q}^*$'s are induced by $\tilde{q}$. Expand the left square to get a diagram of homotopy fibration sequences
\[
\xymatrix{
	&\Omega\map^*_{0,\gamma}(S^4\vee A,BG)\ar@{=}[r]\ar[d]		&\Omega\map^*_{0,\gamma}(S^4\vee A,BG)\ar[d]\\
\G_{\gamma}(A,G)\ar[r]\ar@{=}[d]	&\G_{0,\gamma}(S^4\vee A,G)\ar[r]^-{a}\ar[d]	&\Omega^4G\ar[d]^-{s}\\
\G_{\gamma}(A,G)\ar[r]				&G\ar[r]\ar[d]				&\map^*_{\gamma}(A,BG)\ar[d]^-{\tilde{q}^*}\\
	&\map^*_{0,\gamma}(S^4\vee A,BG)\ar@{=}[r]		&\map^*_{0,\gamma}(S^4\vee A,BG)
}\]
where $s$ is some map. There is an induced map $\tilde{\jmath}^*:\map^*_{0,\gamma}(S^4\vee A,BG)\to \map_\gamma^*(A,BG)$ which satisfies $\tilde{\jmath}^*\circ \tilde{q}^*=(\tilde{q}\circ\tilde\jmath)^*\simeq id$. Because $s\simeq \tilde{\jmath}^*\circ \tilde{q}^*\circ s$, and $s$ and $\tilde{q}^*$ are consequtive maps in a homotopy fibration sequence, the map $s$ is null-homotopic. Because the middle right-hand square of the diagram is a homotopy pullback, it follows now that $\G_{0,\gamma}(S^4\vee A,G)\simeq\G_{\gamma}(A,G)\times\Omega^4G$.
\end{proof}

Combine Theorem~\ref{thm_gauge gp general decomp}, Lemmas~\ref{lemma_decompose M/Y} and~\ref{lemma_gauge gp over wedge sum} to get the following theorem.

\begin{thm}\label{thm_gauge gp M/Y}
Let $M$ be a closed, orientable, simply connected 6-manifold with homology as in~(\ref{table_original M hmlgy}), and let $P$ be a principal $SU(n)$-bundle over $M$ with $c_2(P)=(0,\ldots,0)$ and $c_3(P)=l$. Suppose $H_*(M)$ has no torsion. If $\mathcal{P}^1:H^2(M;\Z/3)\to H^6(M;\Z/3)$ is trivial, then
\[
\textstyle
\G_{\vec{0},l}(M,SU(n))\simeq_{(\frac{1}{2})}\G_{l}(S^6,SU(n))\times\prod^{b}_{i=1}(\Omega^4G\times\Omega^2SU(n))\times\prod^{2d}_{k=1}\Omega^3SU(n).
\]
If $\mathcal{P}^1:H^2(M;\Z/3)\to H^6(M;\Z/3)$ is non-trivial, then
\[
\textstyle
\G_{\vec{0},l}(M,SU(n))\simeq_{(\frac{1}{2})}\G_{\vec{0},l}(C',SU(n))\times\prod^{b-1}_{i=1}(\Omega^4G\times\Omega^2SU(n))\times\prod^{2d}_{k=1}\Omega^3SU(n),
\]
where $C'$ is a CW-complex with one 2-cell, one 4-cell and one 6-cell.

Suppose the torsion group $T\cong\Z/m$ and $\mathcal{P}^1:H^2(M;\Z/3)\to H^6(M;\Z/3)$ is trivial. Then
\begin{eqnarray*}
\G_{\vec{0},l}(M,SU(n))
&\simeq_{(\frac{1}{2})}&\textstyle\G_{\vec{0},l}(C'',SU(n))\times\prod^{b-1}_{i=1}\Omega^4G\times\prod^b_{i=1}\Omega^2SU(n)\times\\[6pt]
&&\Omega^3\{m\}SU(n)\times\textstyle\prod^{2d}_{k=1}\Omega^3SU(n)
\end{eqnarray*}
where $C''$ is a CW-complex with one 6-cell and 5-skeleton $P^4(m)\cup e^4$.
\end{thm}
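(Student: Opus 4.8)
The plan is to assemble the statement directly from the three results already at hand: Theorem~\ref{thm_gauge gp general decomp}, Lemma~\ref{lemma_decompose M/Y} and Lemma~\ref{lemma_gauge gp over wedge sum}. Throughout we take $G=SU(n)$ with $n\geq 3$, which is a simply connected, simple, compact Lie group, so that Lemma~\ref{lemma_gauge gp over wedge sum} applies and the bundle-classification of Lemma~\ref{lemma_classification of SU-bundle} is available for both $M$ and the mapping cone $C_{\jmath}$.

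First I would feed the torsion hypothesis into Theorem~\ref{thm_gauge gp general decomp}. When $H_*(M)$ is torsion free we have $c=0$, so the product $\prod_{j=1}^c\Omega^3\{p^{r_j}_j\}SU(n)$ disappears and we are in Case~A (when $\mathcal{P}^1$ is trivial) or Case~B (when it is not); when $T\cong\Z/m$ we have $c=1$ with $p^{r_1}_1=m$, and with $\mathcal{P}^1$ trivial we are in Case~A retaining the single factor $\Omega^3\{m\}SU(n)$. In each situation Theorem~\ref{thm_gauge gp general decomp} reduces the computation of $\G_{\vec 0,l}(M,SU(n))$ to that of $\G_{\vec 0,l}(C_{\jmath},SU(n))$ together with the explicit loop-space factors recorded there, namely $b$ or $b-1$ copies of $\Omega^2SU(n)$, the $2d$ copies of $\Omega^3SU(n)$, and possibly $\Omega^3\{m\}SU(n)$.

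Next I would invoke Lemma~\ref{lemma_decompose M/Y}, which under each of these hypotheses writes $C_{\jmath}$, up to homotopy after localization away from $2$, as a wedge $\bigvee S^4\vee D$ with $D=S^6$ (torsion free, $\mathcal{P}^1$ trivial), $D=C'$ (torsion free, $\mathcal{P}^1$ non-trivial), or $D=C''$ (cyclic torsion, $\mathcal{P}^1$ trivial); the number of $S^4$ summands is $b$ in the first case and $b-1$ in the other two, in accordance with the cell counts there. Since $c_2(P)=\vec 0$, the class of $P$ in $[C_{\jmath},BSU(n)]\cong H^4(C_{\jmath})\times H^6(C_{\jmath})$ has trivial $H^4$-component, so under this wedge decomposition it restricts to the trivial bundle on each $S^4$ summand and to the bundle on $D$ carrying the data $(\vec 0,l)$ (the $H^6$-class being $l$, and the $H^4$-class of $D$, where $D$ has a $4$-cell, being $0$). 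Iterating Lemma~\ref{lemma_gauge gp over wedge sum} one $S^4$ at a time then gives $\G_{\vec 0,l}(C_{\jmath},SU(n))\simeq\G_{\vec 0,l}(D,SU(n))\times\prod\Omega^4SU(n)$, with one $\Omega^4SU(n)$ per $S^4$ summand. Substituting this back into the output of Theorem~\ref{thm_gauge gp general decomp} and reindexing — in particular pairing the $\Omega^4SU(n)$ factors with the $\Omega^2SU(n)$ factors into $\prod(\Omega^4SU(n)\times\Omega^2SU(n))$, noting that in Case~B the count of $\Omega^2$-factors is $b-1$ precisely because one $\textbf{S}^2_i$ was absorbed into $C_{\jmath}$ — yields the three asserted homotopy equivalences, with $\G_{\vec 0,l}(D,SU(n))$ equal to $\G_l(S^6,SU(n))$, $\G_{\vec 0,l}(C',SU(n))$ or $\G_{\vec 0,l}(C'',SU(n))$ respectively.

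The step I expect to require the most care is the bookkeeping of bundle labels through the chain of equivalences, together with the mixing of integral and $2$-local statements: one must verify that the homotopy equivalence $C_{\jmath}\simeq_{(\frac{1}{2})}\bigvee S^4\vee D$ of Lemma~\ref{lemma_decompose M/Y} carries the $(\vec 0,l)$-bundle to the trivial bundle on each $S^4$ and to the $(\vec 0,l)$-bundle on $D$, so that the hypotheses of Lemma~\ref{lemma_gauge gp over wedge sum} hold at every stage of the iteration, and that composing the $2$-local equivalences of Theorem~\ref{thm_gauge gp general decomp} and Lemma~\ref{lemma_decompose M/Y} with the equivalence of Lemma~\ref{lemma_gauge gp over wedge sum} does indeed give the claimed $\simeq_{(\frac{1}{2})}$. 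Once this is checked the theorem follows by direct substitution.
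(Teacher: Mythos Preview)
Your proposal is correct and follows exactly the paper's approach: apply Theorem~\ref{thm_gauge gp general decomp} to split off the $\Omega^2$, $\Omega^3$ and $\Omega^3\{p_j^{r_j}\}$ factors, invoke Lemma~\ref{lemma_decompose M/Y} to write $C_{\jmath}\simeq_{(\frac{1}{2})}\bigvee S^4\vee D$, and then iterate Lemma~\ref{lemma_gauge gp over wedge sum} to peel off the $\Omega^4SU(n)$ factors and substitute back. One minor slip that does not affect the argument: when $T\cong\Z/m$ with $m$ not a prime power you do not literally have $c=1$, but since $P^3(m)\simeq\bigvee_jP^3(p_j^{r_j})$ away from~$2$ the single factor $\Omega^3\{m\}SU(n)$ coincides with $\prod_j\Omega^3\{p_j^{r_j}\}SU(n)$, so your bookkeeping still lands on the stated decomposition.
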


\begin{proof}
Suppose $H_*(M)$ has no torsion and $\mathcal{P}^1:H^2(M;\Z/3)\to H^6(M;\Z/3)$ is trivial. By Theorem~\ref{thm_gauge gp general decomp} we have $\G_{\vec{0},l}(M,SU(n))\simeq_{(\frac{1}{2})}\G_{\vec{0},l}(C_{\jmath},SU(n))\times\prod^{b}_{i=1}\Omega^2SU(n)\times\prod^{2d}_{k=1}\Omega^3SU(n)$. In this case Lemma~\ref{lemma_decompose M/Y} implies $C_{\jmath}\simeq\bigvee^b_{i=1}S^4\vee S^6$, and Lemma~\ref{lemma_gauge gp over wedge sum} gives
\[
\textstyle
\G_{\vec{0},l}(C_{\jmath})\simeq\G_{l}(S^6)\times\prod^b_{i=1}\Omega^4SU(n).
\]
Thus we obtain the first homotopy equivalence. The other two are shown similarly.
\end{proof}

%%%%%%%%%%%%%%%


\begin{thebibliography}{99}
\bibitem{AB83}
M. Atiyah and R. Bott, {\it The Yang-Mills equations over Riemann surfaces}, Philos. Trans. R. Soc. Lond. Ser. A Math. Phys. Eng. Sci., \textbf{308}, (1983), 523 -- 615. 

\bibitem{A01}
M. Arkowitz, {\it Introduction to homotopy theory}, Springer, New York, (2011).

%\bibitem{BGG}
%S.B. Bradlow, O. Garcia-Prada, and P.B. Gothen, {\it Homotopy groups of moduli spaces of representations}, Topology \textbf{47} (2008), 203 -- 224.

\bibitem{CMN79}
F. Cohen, J. Moore and J. Neisendorfer, {\it Torsion in homotopy groups}, Annals of Mathematics, Second Series, \textbf{109}, (1979), 121 -- 168.


\bibitem{CS00}
M. C. Crabb and W. A. Sutherland, {\it Counting homotopy types of gauge groups}, Proc. London Math. Soc. \textbf{81}, (2000), 747 -- 768.

\bibitem{cutler}
T. Cutler, {\it The homotopy types of $U(n)$-gauge groups over $S^4$ and $\C\PP^2$}, Homology Homotopy Appl. \textbf{20}, (2018), no. 1, 5 -- 36.

%\bibitem{cutler2}
%T. Cutler, {\it The homotopy types of $Sp(3)$-gauge groups}, Topology Appl. \textbf{236}, (2018), 44 -- 58.

\bibitem{DU}
G. D. Daskalopoulos and K. K. Uhlenbeck, {\it An application of transversality to the topology of the moduli space of stable bundles}, Topology \textbf{34}, (1995), 203 -- 215.

\bibitem{donaldson1}
S. K. Donaldson, {\it Connections, cohomology and the intersection forms of 4-manifolds}, J. Differential Geom. \textbf{24}, (1986), 275 -- 341.

%\bibitem{EF} 
%I. Etingof and I.B. Frenkel, {\it Central extensions of current groups in two dimensions}, Commun. 
%Math. Phys. \textbf{165} (1994), 429 -- 444. 

%\bibitem{gottlieb72}
%D. Gottlieb, {\it Applications of bundle map theory}, Trans. Amer. Math. Soc., \textbf{171}, (1972), 23 -- 50.

\bibitem{hatcher}
A. Hatcher, {\it Algebraic Topology}, Cambridge University Press, (2002). 

\bibitem{HK06}
H. Hamanaka and A. Kono, {\it Unstable K-group and homotopy type of certain gauge groups}, Proc. Roy. Soc. Edinburgh Sect. A, \textbf{136}, (2006), 149 -- 155.

\bibitem{HK07}
H. Hamanaka and A. Kono, {\it Homotopy type of gauge groups of SU(3)-bundles over $S^6$}, Topology Appl. \textbf{154} (2007), no. 7, 1377 -- 1380.

\bibitem{hiltonmislinroitberg}
P. Hilton, G. Mislin, J. Roitberg, {\it Localization of Nilpotent Groups and Spaces}, North Holland Publishing Company, Amsterdam, (1975).

%\bibitem{huang21}
%R. Huang, {\it Homotopy of gauge groups over high-dimensional manifolds}, Proc. Roy. Soc. Edinburgh Sect. A, (2021), https://doi.org/10.1017/prm.2021.1.

\bibitem{huang}
R. Huang, {\it Suspension homotopy of 6-manifolds}, arXiv:2104.04994.v1, (2021).

\bibitem{husemoller}
D. Husemoller, {\it Fibre bundles}, third edition, Springer, (1994). 

\bibitem{jupp}
P. Jupp, {\it Classification of certain 6-manifolds}, Proc. Camb. Phil. Soc. \textbf{73}, (1973), 293 -- 300.

%\bibitem{MN} 
%P. Maier and K-H. Neeb, {\it Central extensions of current groups}, Math. Ann. \textbf{326}, (2003), 367 -- 415.

\bibitem{MS} 
J. Milnor and J. Stasheff, {\it Characteristic classes}, Princeton University Press, (1974).

\bibitem{neisendorfer}
%J. Neisendorfer, {\it Algebraic Methods in Unstable Homotopy Theory}, Cambridge University Press, (2010).
J. Neisendorfer, {\it Homotopy groups with coefficients}, J. Fixed Point Theory Appl., \textbf{8}, (2010), 247 -- 338.

\bibitem{JR}
J. Rutter, {\it A homotopy classification of maps into an induced fibre space}, Top. \textbf{6}, (1967), 479 -- 403.

\bibitem{kono91}
A. Kono, {\it A note on the homotopy type of certain gauge groups}, Proc. Roy. Soc. Edinburgh Sect. A, \textbf{117}, (1991), 295 -- 297.

\bibitem{KKT13}
D. Kishimoto, A. Kono and M. Tsutaya, {\it Mod $p$ decompositions of gauge groups}, Algebr. Geom. Topl. \textbf{13} (2013), 1757 -- 1778.

\bibitem{KKT14}
D. Kishimoto, A. Kono and M. Tsutaya {\it On $p$-local homotopy types of gauge groups}, Proc. Roy. Soc. Edinburgh Sect. A \textbf{144} (2014), no. 1, 149 -- 160.

\bibitem{so16}
T. So, {\it Homotopy types of gauge groups over non-simply-connected closed 4-manifolds}, Glasgow Math J. \textbf{61} (2019), 349 -- 371.

\bibitem{ST19}
T. So and S. Theriault, {\it The suspension of a 4-manifold and its applications}, arXiv:1909.11129, (2019).

%\bibitem{theriault10}
%S. Theriault, {\it Odd primary homotopy decompositions of gauge groups}, Algebr. Geom. Topol., \textbf{10}, (2010), 535 -- 564.

\bibitem{theriault17}
S. Theriault, {\it Odd primary homotopy types of $SU(n)$-gauge groups}, Algebr. Geom. Topol., \textbf{17}, (2017), 1131 -- 1150. 

\bibitem{wall1966}
C. Wall, {\it Classification problems in differential topology. V: On certain 6-manifolds}, Invent. math., \textbf{1}, (1966), 355 -- 374.

\bibitem{zubr1988}
A. \v Zubr, {\it Classification of simply-connected topological 6-manifolds}, Topology and Geometry (Rohlin-Seminar), Springer Lecture Notes in Mathematics, 1346 (1988), 325-339.

\end{thebibliography}
\end{document}